\documentclass[11pt]{article}\usepackage{amsmath, amscd, amsthm} 
\usepackage{amssymb, amsfonts}
\input xy \xyoption{all}
\CompileMatrices
\numberwithin{equation}{subsection}%

\def\proof{\par\medskip\noindent {\bf Proof: }}
\def\qed{\hfill $\Box$ \medskip \par}
\def\Box{\framebox[10pt]{\rule{0pt}{3pt}}}

\ifx\ThmNum\undefined 
   \newtheorem{theorem}{Theorem}[section]
   
\else 
   \newtheorem{theorem}{Theorem} 
   
\fi

\newcommand{\R}{\mathbb{R}}

\newtheorem{remark}[theorem]{Remark}
\newtheorem{proposition}[theorem]{Proposition}
\newtheorem{lemma}[theorem]{Lemma}
\newtheorem{definition}[theorem]{Definition}
\newtheorem{corollary}[theorem]{Corollary}




\title{The homotopy automorphisms of a marked n-stage}
\author{ Manfred Stelzer}
\date{\today}

\begin{document}

\maketitle

\begin{abstract}
We show nilpotency and completeness results for the homotopy
automorphisms of a marked n-stage for an unstable coalgebra. These objects figure in the moduli problem of unstable coalgebras. Our theorems extend   classical work of Dror, Zabrodsky, Maruyama and M\o ller.
\end{abstract}

\section{Introduction}
\subsection{Overview}
 During the last decades moduli problems in homotopy theory became an important subject. In the realm of derived algebraic geometry various classical moduli problems of algebraic geometry where enhanced and extended to derived moduli problems in simplicial algebras or $E_{\infty}$-algebras. On the other hand
   realization  problems for classical topological invariants such as homotopy or (co)homology groups give rise to moduli problems as well.
 The case of rational cohomlogy was studied in the  eighties by various researchers in rational homotopy theory \cite{SSt},\cite{F},\cite{HSt}. Moduli spaces of realizations of $\pi$-algebras are the subject of \cite{BDG}. \\   
In this paper we continue the investigation of   the moduli space $\mathcal{M}_{Top}(C)$ of realizations of an  unstable coalgebra
$C$ which was started in \cite{BRS}. In order to set the scene we recall some structural results about these moduli spaces.
There is a  decomposition 
\begin{equation}\label{dec}\mathcal{M}_{Top}(C)\simeq \sqcup BAut^h (X)
\end{equation}
  where  $X$ runs through the topological realizations of $C$. 
Here $Aut^h(X)$ is the simplicial monoid of self homotopy equivalences of $X$ in a simplicial   model category.
 For $1$-connected $C$ the space $\mathcal{M}_{Top}(C)$ is fibered in a convergent tower with equivalences
 \begin{equation}\label{tow}\mathcal{M}_{Top}(C)\simeq  holim \mathcal{M}_{n}(C)
\end{equation}  

 where  the spaces $\mathcal{M}_{n}(C)$ are moduli spaces attached to cosimplicial truncated versions of realizations for the given coalgebra. 
This tower gives a way to do obstruction theory on the realization problem as the homotopy fibers of the maps between layers have a cohomological description.  The truncated objects of level $n$ are called potential $n$-stages. Given a potential $(n-1)$-stage $X$ for $C$ which extends to a potential $n$-stage let  $\mathcal{M}_{n}(C)_{X}$ be  the subspace of $\mathcal{M}_{n}(C)$ over the component of $X$. 

  The set of path components  can  be  described as the following set of orbits    \cite[proposition 7.4.2.]{BRS}. It counts the extensions of the given $(n-1)$-stage $X$ to an $n$-stage.
\begin{equation}\label{moti}
\pi_0(\mathcal{M}_{n}(C)_{X}) \cong \frac{AQ^{n+1}_C(C; C[n])}{\pi_1(B Aut^h(X))}.\end{equation}
 
Here $AQ^{n+1}_C(C; C[n])$ is the Andr\'{e}-Quillen cohomology of the unstable coalgebra $C$ with coefficients in the shifted comodule $C[n]$ underlying $C$.  This Andr\'{e}-Quillen cohomology 
makes up the $E_2$-term of an unstable Adams spectral sequence.\\
The goal of this paper is to obtain structural nilpotency and completeness results about the simplicial monoid  $Aut^h(X)$ and its group of path components $ \pi_0 Aut^h(X)$.


 \subsection{Statement of results}
    Let  $\mathbb{F}$ a prime field of characteristic $l$.  All homology groups $H_* (-)$ will  be taken with $\mathbb{F}$ coefficients throughout. In the following the term $l$-complete stands for $H_* (-)$-complete in the sense of \cite{Bou2}.\\
  Let $X$ be a potential n-stage for the unstable coalgebra $C$  and let $G$ be a subgroup of $\pi_0 Aut^h(X)$ which acts nilpotently on  \[\pi^0 H_* (X)\cong C.\]  Moreover, let $Aut^{h}_G (X)$ be the submonoid of $Aut^h(X)$   of all components of $G\subset \pi_0 Aut^h(X)$. We write $\tilde{G}$ for the group of automorphisms of $C$ induced by $G$. The group $G_{\sharp}$ of homotopy classes of automorphisms which induce the identity on $C$ is worth special attention. For short we write $Aut^{h}_{\sharp} (X)$ for $Aut^{h}_{G_{\sharp}} (X)$.\\
   \\
    Our main result reads:
   
\begin{theorem}\label{main}  Let $X$ and $G$ be as above.
Then \begin{enumerate}
\item $BAut^{h}_G (X)$ is a nilpotent space.
\item $BAut^{h}_G (X)$ is $l$-complete in case  $G$ contains $G_{\sharp}$ and $\tilde{G}$ is $l$-complete.\end{enumerate}  \end{theorem}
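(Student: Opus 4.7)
The plan is to build a tower on the automorphism side that mirrors the tower of potential $k$-stages for $k\leq n$, and then to argue layer by layer using standard nilpotent-fibration machinery. A potential $n$-stage $X$ canonically fits into a tower $X = X_n \to X_{n-1} \to \cdots \to X_0$ whose successive extensions are governed by André--Quillen cohomology groups, as already visible on $\pi_0$ in \eqref{moti}. Restricting along this tower should produce a compatible tower of simplicial monoids $Aut^{h}_G(X_k)$, and hence a fibration tower of classifying spaces $BAut^{h}_G(X_k)$ whose homotopy fibers are (up to path components) Eilenberg--MacLane spaces built on subquotients of $AQ^{k+1}_C(C; C[k])$. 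The $\pi_1$-action on each such fiber factors through the canonical action of $\tilde{G}$ on $C$.

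For part (1) I would induct on $k$. The base $BAut^{h}_G(X_0)$ essentially reduces to $B\tilde{G}$, which is nilpotent because the normal subgroup $G_{\sharp}$ is itself controlled by the nilpotent $\tilde{G}$-action on $C$ (a direct bookkeeping argument). At each subsequent step the fiber is an Eilenberg--MacLane space on an AQ-group carrying a natural $\tilde{G}$-action; nilpotency of the $\tilde{G}$-action on $C$ propagates to nilpotency of the action on $AQ^{k+1}_C(C; C[k])$ by functoriality applied to a $\tilde{G}$-stable filtration of $C$ produced from the given nilpotent action. The classical Dror--Zabrodsky principle that a fibration with nilpotent base, nilpotent fiber and nilpotent $\pi_1$-action on fiber homotopy has nilpotent total space then closes the induction.

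For part (2) the additional input is that $B\tilde{G}$ is $l$-complete by hypothesis on $\tilde{G}$, while the fibers are Eilenberg--MacLane spaces on $\mathbb{F}$-vector spaces and hence automatically $l$-complete. Applying the Bousfield--Kan fibre lemma (an $H_*$-fibration with $l$-complete nilpotent base and $l$-complete nilpotent fiber, subject to the appropriate nilpotent-action hypothesis, has $l$-complete total space) layer by layer yields the result. The assumption $G_{\sharp}\subseteq G$ is precisely what ensures that no intermediate fiber is collapsed or truncated upon restricting to $G$-components: it guarantees the kernel of the tower is completely captured.

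The principal obstacle is upgrading the $\pi_0$-level identification \eqref{moti} to a full, $\tilde{G}$-equivariant description of the homotopy fibers in the automorphism tower in terms of AQ-cohomology. This should be obtainable by revisiting the obstruction theory of \cite{BRS}, carefully keeping track of the $\tilde{G}$-action throughout; once this equivariant refinement is in place, both parts of the theorem reduce to clean inductions along the tower, and the classical results of Dror, Zabrodsky, Maruyama and Møller become essentially special cases (corresponding to the situation where the tower has length one).
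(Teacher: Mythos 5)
Your overall architecture --- induction along the skeletal tower of $X$, with layers controlled by Andr\'e--Quillen cohomology of $C$, closed by the Dror--Zabrodsky nilpotent-fibration principle and the Bousfield--Kan fibre lemma --- is the same as the paper's, and the equivariant identification of the homotopy fibers that you flag as the ``principal obstacle'' is in fact already available from \cite{BRS} (the fibre sequences \ref{43} and \ref{45} and the pullback squares \ref{15}). The genuine gap is elsewhere: your claim that nilpotency of the $G$-action on $C$ ``propagates to nilpotency of the action on $AQ^{k+1}_C(C;C[k])$ by functoriality applied to a $\tilde G$-stable filtration of $C$'' does not go through as stated. Andr\'e--Quillen cohomology is a derived functor of the non-exact coabelianization $Ab_C$, computed from a cofree cosimplicial resolution; a $G$-stable filtration of $C$ by subcomodules induces neither an exact sequence nor a usable filtration of $AQ^{*}_C(C;C[k])$ with identifiable subquotients, so there is no direct ``functoriality'' argument. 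This is exactly the point where the paper has to work hardest: it constructs a coalgebraic Miller-type spectral sequence (Theorem \ref{SS}), whose existence rests on the right adjoint $Ab_C$ of Theorem \ref{adjoint} (the subject of the entire appendix) and whose $E_2$-term depends only on $\pi^*H_*(X)$; nilpotency of the action is then checked on the $E_1$-term via Lemmas \ref{nil2} and \ref{nil3} (left-exact additive functors preserve nilpotent actions, and $\mathrm{Hom}$ between nilpotent modules is nilpotent under conjugation) and transported through strong convergence to give Theorem \ref{Ha}. Without a substitute for this spectral sequence your induction cannot be started at the layer level.

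Two smaller points. First, for the coefficients: the action of $G\subset\pi_0 Aut^h_{\sharp}(X)$ on $C[n+1]\cong\pi^{n+2}H_*(X)$ has to be identified with its action on $C\cong\pi^0H_*(X)$ via the spiral exact sequence (Lemma \ref{nil1}); this is what makes the precomposition part of the action on $AQ^{*}_C(C;C[n])$ trivial, and it is not automatic from ``the $\pi_1$-action factors through $\tilde G$.'' Second, for $l$-completeness the paper does not run the fibre lemma up the automorphism tower (which would require verifying its nilpotent-action hypotheses at every stage, as you concede); instead it proves that the framed moduli space $\mathcal{M}^{f}_n(C)$ is $\mathbb{F}$-complete by induction on the principal tower \ref{framed} (homotopy pullbacks over Andr\'e--Quillen spaces, whose homotopy groups are $\mathbb{F}$-vector spaces), deduces completeness of $\mathcal{M}^{'}_n(C)$ in Lemma \ref{comM}, and then identifies $Aut_1^h(X)$ with a loop space of a component of this complete space before invoking \cite[VI.5.4.]{BK}. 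Your route is plausible but leaves those hypotheses unverified.
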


Since the monoid $Aut^{h}_G (X)$ is group like we obtain:

\begin{corollary}\label{maincor} Let $X$ and $G$ be as above. Then the  group $\pi_0 Aut^{h}_G (X)$ is nilpotent and $l$-complete.\end{corollary}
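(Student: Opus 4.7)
The plan is to extract the statement from Theorem \ref{main} by passing from the classifying space $BAut^{h}_G(X)$ to its fundamental group via the loop space identification. Since $Aut^{h}_G(X)$ is group-like, the canonical map $Aut^{h}_G(X)\to \Om BAut^{h}_G(X)$ is a weak equivalence of simplicial monoids, so we have
\begin{equation*}
\pi_0 Aut^{h}_G(X) \cong \pi_1 BAut^{h}_G(X).
\end{equation*}

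First I would read off the nilpotency: by part (1) of Theorem \ref{main}, the space $BAut^{h}_G(X)$ is nilpotent, and by definition this requires in particular that $\pi_1 BAut^{h}_G(X)$ be a nilpotent group. Combined with the identification above, this gives nilpotency of $\pi_0 Aut^{h}_G(X)$, without any further hypothesis on $G$.

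For the $l$-completeness, I would invoke part (2) of Theorem \ref{main} together with the standard fact (Bousfield--Kan, \cite{Bou2}) that for a nilpotent space $Y$ which is $H\F$-complete, the fundamental group $\pi_1(Y)$ is $l$-complete as a nilpotent group; equivalently, under the assumption that $G$ contains $G_{\sharp}$ and $\tilde{G}$ is $l$-complete, the $l$-completeness of $BAut^{h}_G(X)$ descends to $\pi_1$ via the Bousfield--Kan homotopy spectral sequence, whose $E_\infty$-terms for a nilpotent $l$-complete space compute the $l$-complete homotopy groups.

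The proof is thus essentially a two-line deduction; the only potentially delicate point is making sure the notion of $l$-completeness used for the group $\pi_0 Aut^{h}_G(X)$ (as a nilpotent group) matches the space-level $H\F$-completeness of $BAut^{h}_G(X)$, but this is exactly the content of the Bousfield--Kan equivalence of the two notions for nilpotent spaces.
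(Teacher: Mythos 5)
Your deduction is formally the one advertised in the introduction (``since the monoid $Aut^h_G(X)$ is group like we obtain\dots''), and the individual steps are fine: $\pi_0 Aut^h_G(X)\cong\pi_1 BAut^h_G(X)$ because the monoid is group-like, nilpotency of a space forces nilpotency of its fundamental group, and for nilpotent spaces $H\F$-completeness of the space is equivalent to the corresponding algebraic completeness of its homotopy groups. The problem is that in this paper the logical dependency runs in the opposite direction, so your argument is circular as a substitute for the paper's proof. The paper proves Corollary \ref{maincor} \emph{first} and \emph{directly}: the proof of Theorem \ref{main}(1) given in Section 6 only shows that the homotopy groups of $BAut^h_G(X)$ in degrees $\geq 2$ are nilpotent $G$-modules; the nilpotency of $\pi_1 BAut^h_G(X)=G$ itself, which is the other half of what ``nilpotent space'' means, is exactly the content of the corollary and is established beforehand by an induction over the tower $Aut^h_\sharp(X(n))\to\cdots\to Aut^h_\sharp(X(0))=1$ coming from the skeletal filtration, using the nilpotent $G$-action on Andr\'e--Quillen cohomology (Theorem \ref{Ha}), the equivariance of the long exact sequence (Proposition \ref{lo}), Hall's theorem \ref{Hall}, and the extension $1\to H\to G\to\tilde G\to 1$. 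So you cannot quote Theorem \ref{main}(1) to get the corollary without first supplying an independent proof of the $\pi_1$-statement.

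The same issue affects the completeness half. The paper does not descend $l$-completeness from the space $BAut^h_G(X)$ to its $\pi_1$; it argues at the group level via the exact sequence $1\to G_\sharp\to G\to\tilde G\to 1$, using that $G_\sharp$ is the fundamental group of a component of $\mathcal{M}'_n(C)$, which is $\F$-complete by Lemma \ref{comM}, and then closing the extension with M{\o}ller's Corollary 2.3(b). Your route would again need Theorem \ref{main}(2) as an already-proved input, whereas its proof in the paper invokes \ref{comM} and the group-level statement in essentially the same way. In short: your reduction is a valid observation about the relationship between the theorem and the corollary, but the actual mathematical work the corollary requires --- the induction over the skeletal tower and the completeness argument via $\mathcal{M}'_n(C)$ --- is missing from your proposal.
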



\begin{corollary}\label{maincor3} Let $X$  be as above. Then the  space $B Aut^{h}_{\sharp} (X)$ is nilpotent and $l$-complete.\end{corollary}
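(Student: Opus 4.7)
The plan is to deduce Corollary \ref{maincor3} as an immediate specialization of Theorem \ref{main} to the distinguished subgroup $G = G_\sharp$ of $\pi_0 Aut^h(X)$. To apply the theorem one must verify its two standing hypotheses for this choice: that $G_\sharp$ acts nilpotently on $\pi^0 H_*(X) \cong C$, and, for the completeness conclusion, that $G_\sharp \supseteq G_\sharp$ (which is vacuous) together with $\widetilde{G_\sharp}$ being $l$-complete.

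The first hypothesis is automatic. By the very definition of $G_\sharp$ as the group of homotopy classes of self-equivalences inducing the identity on $C$, its action on $C$ is trivial, and a trivial action is certainly nilpotent. Part (1) of Theorem \ref{main} therefore yields that $BAut^h_\sharp(X)$ is nilpotent. For the second hypothesis, $\widetilde{G_\sharp}$ — the image of $G_\sharp$ in $\mathrm{Aut}(C)$ — is by construction the trivial subgroup. The trivial group is $l$-complete, since its classifying space is a point and hence $H_*(-)$-complete in the sense of \cite{Bou2}. Part (2) of Theorem \ref{main} therefore applies and delivers the $l$-completeness of $BAut^h_\sharp(X)$.

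There is no genuine obstacle in this argument: all the mathematical content is packed into Theorem \ref{main}, and the role of Corollary \ref{maincor3} is simply to observe that both hypotheses of that theorem hold for free when one specializes to the most natural and most restrictive distinguished subgroup $G_\sharp$ of $\pi_0 Aut^h(X)$.
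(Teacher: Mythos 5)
Your proposal is correct and is exactly the argument the paper intends: Corollary \ref{maincor3} is the specialization of Theorem \ref{main} to $G = G_\sharp$, where the nilpotency hypothesis holds because $G_\sharp$ acts trivially on $C$ by definition, and the completeness hypothesis holds because $\widetilde{G_\sharp}$ is the trivial group. The paper states this corollary without a separate proof precisely because it is this immediate consequence.
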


Here is another consequence.

\begin{corollary}\label{maincor4}Let   $X$ be an $l$-complete realization of a $1$-connected
 $C$.
   Then $BAut^h_{\sharp} (X)$ is $l$-complete. More precisely, it is a homotopy limit of $l$-complete nilpotent spaces.\end{corollary}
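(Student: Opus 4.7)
The strategy is to exhibit $BAut^h_\sharp(X)$ as a homotopy inverse limit of the spaces $BAut^h_\sharp(X_n)$, where $X_n$ denotes the potential $n$-stage obtained by truncating the realization $X$, and then to apply Corollary \ref{maincor3} at each finite level. Since $C$ is $1$-connected, the tower (\ref{tow}) converges to give $\mathcal{M}_{Top}(C) \simeq holim\,\mathcal{M}_n(C)$. Restricting this equivalence to the path component of $X$ and using the layerwise decomposition (\ref{dec}) identifies the component as $BAut^h(X) \simeq holim_n BAut^h(X_n)$.

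Next, I would pass to the $\sharp$-covers. The action on the underlying unstable coalgebra $C \cong \pi^0 H_*(-)$ induces a natural map $BAut^h(-) \to BAut(C)$ whose homotopy fibre is $BAut^h_\sharp(-)$. Since the base $BAut(C)$ is constant throughout the tower, taking homotopy inverse limits of these fibration sequences levelwise yields $BAut^h_\sharp(X) \simeq holim_n BAut^h_\sharp(X_n)$. By Corollary \ref{maincor3}, each $BAut^h_\sharp(X_n)$ is nilpotent and $l$-complete, and since $l$-completeness is preserved under homotopy inverse limits of towers \cite{Bou2}, the conclusion follows.

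The main obstacle I anticipate is verifying the compatibility underlying the equivalence $BAut^h_\sharp(X) \simeq holim_n BAut^h_\sharp(X_n)$: one must check that the tower truncation maps $X \to X_n$ carry $Aut^h_\sharp(X)$ into $Aut^h_\sharp(X_n)$ compatibly, which reduces to the naturality of the action on $C$ under truncation. A related subtlety is that the hypotheses ``$X$ is $l$-complete'' and ``$C$ is $1$-connected'' are precisely what guarantee convergence of the $n$-stage tower on the realization itself, and not merely on its moduli space. These compatibility points should follow from the explicit construction of the moduli tower in \cite{BRS}, but deserve careful checking.
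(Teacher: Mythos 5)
Your argument is correct and is essentially the route the paper intends: for $1$-connected $C$ the marked moduli tower gives $BAut^h_{\sharp}(X)$ as (a component of) $holim_n\,\mathcal{M}'_n(C)$, each $\mathcal{M}'_n(C)$ is $l$-complete by Lemma \ref{comM} with nilpotent components by Theorem \ref{main}, and one concludes by \cite[12.9]{Bou2}. Your passage to fibres over the constant base $BAut(C)$ is exactly the fibration sequence (\ref{FS}) defining the marked moduli spaces $\mathcal{M}'_n(C)$, and the compatibility of truncation with the $\sharp$-condition that you flag is handled in the paper by the corollary identifying $\pi_0 Aut^h$ of a marked stage with the subgroup inducing the identity on $C$.
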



Our results are analogues of the classical theorems of Dror, Zabrodsky, 
 Maruyama and M\o ller. 
 These authors studied  the group of homotopy self equivalences of a finite complex or a finite Postnikov section $X$ which induce the identity on homotopy or homology. They showed nilpotency \cite{DZ} and $\mathbb{F}$-completeness of the group of path components for $\mathbb{F}$-complete $X$ \cite{Ma},\cite{Mo}. More recently, some of these results were extended for p-local homotopy groups and homotopy groups  with mod-p coefficients by Cuvilliez, Murillo and Viruel \cite{CMV}.

The proof of \ref{main} is by induction on the skeletal filtration of an $n$-stage $X$. It was shown in \cite{BRS} that this filtration  is manufactured by homotopy pushouts where the attached spaces are  twisted (dual) Eilenberg-MacLane objects which corepresent Andr\'{e}-Quillen cohomology.   Following a strategy pioneered by  Dror and Zabrodsky,   we show that the group $G$ operates nilpotently on Andr\'{e}-Quillen cohomology of $C$.  While Dror and Zabrodsky  employ the Serre spectral sequence we use a variant of an inverse Adams  spectral sequence due to Miller whose $E_2$-term depends only on  $\pi^* H_* (X)$. As a substitute for the application of classical obstruction theory in \cite{DZ} we apply the main results on the structure of the moduli spaces $\mathcal{M}_{n}(C)$ in \cite{BRS}.\\

\begin{remark} In rational homotopy theory there are related results known \cite{SSt},\cite{F},\cite{HSt}. However, the technology used in these references is   different
 from ours. For example,  the notion of an $n$-stage does not show up. But prounipotency results are proved for groups of homotopy self equivalences of a rational space by means of certain truncations of a filtered model. It would be interesting to pin down the exact relation to our results for the prime field of rationals. However, this will not be an easy task, as the moduli object is represented by a differential graded coalgebra or Lie algebra which has non-nilpotent path components in general.\\\end{remark}

\subsection{Organization of the paper.} In Section 2 we briefly recollect facts about the relevant resolution model categories. We also give a reminder  on homotopy invariants such as  Andr\'{e}-Quillen cohomology of unstable coalgebras. Section 3  collects  and complements
information on moduli spaces of $n$-stages and on certain variations which rigidify  the moduli problems. These moduli spaces are connected by fibration sequences which are used in the inductive arguments later on. Section 4 is devoted to the construction of a spectral sequence which converges to Andr\'{e}-Quillen cohomology. This spectral sequence  is used in Section 5 to deduce nilpotency of the $G$-action on Andr\'{e}-Quillen cohomlogy. In Section 6 we put things together and prove the main theorem \ref{main}. In an appendix we establish results  about coalgebras over a certain comonad. These are needed in the construction of the spectral sequence in Section 4. The proofs depend heavily on $\infty$-categorical technology.\\

\subsection{Conventions}   We will use the following notation. 
\begin{itemize}
\item $\mathcal{S}$ = the category of simplicial sets;
\item $Vec$ = the category of $\mathbb{F}$-vector spaces
\item  $\mathcal{CA}$ = the category of unstable coalgebras over the Steenrod algebra $\mathcal{A}$ over $\mathbb{F}$
\end{itemize}
For any category $\mathcal{C}$ we let
\begin{itemize}
\item  $c\mathcal{C}$ = the category of cosimplicial objects over  $\mathcal{C}$;
\item  $n\mathcal{C}$ = the category of non-positively graded objects over $\mathcal{C}$.
\end{itemize}

\subsection{Acknowledgments}I heartily thank Hadrian Heine who provided me with the proofs of  all the $\infty$-categorical results in the appendix. I have benefited  from conversations related to the subject of this article with Markus Spitzweck and at an earlier stage with Georg Biedermann and George Raptis. Finally, I would like to thank the Deutsche Forschungsgemeinschaft for
support through the Schwerpunktprogramm 
1786 ``Homotopy theory and algebraic geometry''.
 


\section{Cosimplicial spaces and unstable coalgebras} \label{Res}


\subsection{Resolution model categories}
The categories $\mathcal{S}$  and $\mathcal{CA}$  are equipped with the standard Quillen and the discrete model structure respectively.  
 Let $K(\mathbb{F},m)$ denote the Eilenberg-MacLane space of type $\mathbb{F}$ indegree $m$. A product of these spaces will be called an $\mathbb{F}$-Gem. Let $\mathcal{G}=\{K(\mathbb{F},m)| m\geq 0 \}$ 
 and $\mathcal{E}=\{H_* (K(\mathbb{F},m))| m\geq 0\}.$  The categories $c\mathcal{S}$ of cosimplicial spaces and cosimplicial unstable coalgebras $c\mathcal{CA}$  carry simplicial  resolution model category structures relative
to $\mathcal{G}$ and $\mathcal{E}$ respectively \cite{Bou4},\cite{BRS}.\\
The weak equivalences are the maps of cosimplicial objects 
\begin{equation}X\to Y\end{equation} which induce an isomorphism on $\pi_* [-,F]$ for all $F$ in $\mathcal{G}$ or $\mathcal{E}$ respectively. This can be equivalently described as maps which induce an isomorphism on $\pi^* H_* (-)$ and $\pi^* (-)$ respectively. For  $\mathcal{E}$ this equivalence is due to the fact that objects in $\mathcal{E}$   are cofree unstable coalgebras. 

 The cofibrations are the Reedy cofibrations such that the induced homomorphism\begin{equation} [Y,F]\to[X,F]\end{equation} is a fibration of simplicial groups. A concrete description of the fibrations  can be found in \cite[2.3.]{BRS}. We write $c\mathcal{S}^{\mathcal{G}}$ and $c\mathcal{CA}^{\mathcal{E}}$ for these model categories.
The singular homology functor connects these two model model categories. 
 
 \subsection{Andr\'{e}-Quillen cohomology}
Let $\mathcal{V}$ be the full subcategory of all unstable right $\mathcal{A}$-modules $M$ which satisfy the strong unstability condition
\begin{align*}
  x P^n &= 0  \text{ for } |x|\le 2pn \text{ and } p \text{ odd, } \\
 x Sq^n &= 0 \text{ for } |x|\le 2n \text{ and } p=2. 
\end{align*}
For $C\in \mathcal{CA}$ we let $\mathcal{V}C$ stand for the $C$-comodules in $\mathcal{V}$.
 The category $\mathcal{V}C$ is equivalent to the category of coablelian cogroup objects $C/\mathcal{CA}_{ab}$  in $C/\mathcal{CA}$. For $M\in \mathcal{V}C$ let $i_CM=C\oplus M$ with coalgebra structure defined by the comodule structure.
 There is  an adjunction \cite{Bou3}
\begin{equation}\iota_C :  \mathcal{V}C \rightleftarrows C/\mathcal{CA}: Ab_C \end{equation} 

 \begin{definition}
 Let $D\in  C/ c\mathcal{CA}$ and $M\in \mathcal{V}C$ and $M[n]$ the $n$-fold shift of $M$. 
 The $n$-th Andr\'{e}-Quillen cohomology of $D$ with coefficients in $M$ is defined as 
 \[AQ^n_C (D, M)= [M[n],\mathbf{R}Ab_C (D)]_{\mathcal{V}C}=\pi^n (Hom_{\mathcal{V}C}(M,\mathbf{R}Ab_C (D) ))\]
 \end{definition}
 
For $M \in \mathcal{V} C$  there are twisted Eilenberg-MacLane objects 
 $L_{C} (M,n)$, $K_{C} (M,n)$ \cite[5.3, 6.3]{BRS}
   which corepresent  Andr\'{e}-Quillen cohomology in  $c\mathcal{S}^{\mathcal{G}}$ and $c\mathcal{CA}^{\mathcal{E}}$ respectively.
 
\begin{definition}Let $M \in \mathcal{V} C$ and $D \in C/\mathcal{CA}$. The Andr\'{e}-Quillen space of $D$ with coefficients in $M$ is defined as
 
 \[ \mathcal{AQ}^n_{C}(D;M) 
  = map^{ der}_{c(C/ \mathcal{CA})}(K_C(M,n), cD).\]\\
\end{definition}
The homotopy groups of these spaces satisfy:
 
 \begin{equation}\label{AQHG}   \pi_s \mathcal{AQ}^n_C(D;M)\cong\left\{\begin{array}{cl}
                          AQ^{n-s}_C(D;M)& 0\le s\le n \\
                                0       & \text{otherwise}
                                 \end{array}\right. 
\end{equation}

 There are equivalences 
 \[AQ^n_C (D, M)\simeq map^{der}_{L(C,0)}(L_C (M,n),Y)\simeq  map^{der}_{C}(K_C (M,n),H_* (Y)).
 \]
 Here we use the shorthand  $L(C,0)$   for $L_C (C,0)$.
 
 Automorphism groups of the coabelian objects  $\iota_C(M)$ will be important later on.
 
 \begin{definition}An automorphism of $\iota_C(M)$ is an isomorphism $\phi : \iota_C(M) \to \iota_C(M)$ which is compatible with an isomorphism $\phi_0$ 
of $C$ along the projection map:
\[
 \xymatrix{
 \iota_C(M) \ar[r]^{\phi}_{\cong} \ar[d] & \iota_C(M) \ar[d] \\
 C \ar[r]^{\phi_0}_{\cong} & C .
 }
\]
This group of automorphisms will be denoted by $Aut_C(M)$. 
\end{definition}

\subsection{The spiral exact sequence}
The spiral exact sequence relates two invariants whose definitions we recall now.
 \begin{definition} For $X\in c\mathcal{S}^{\mathcal{G}}$
  and $G$ an $\mathbb{F}$-Gem define:\\
  the $E_2$-(co)homotopy groups  as 
 \[\pi_* [X,G]\]
 and the natural (co)homotopy groups  as  
 \[\pi^{\sharp}(X,G)=[X,\Omega_{ext}^{s}(cG)]\]
 where $\Omega_{ext}^s$ is taken with respect to the external simplicial structure and $cG$ denotes the constant cosimiplicial object $G$.
 \end{definition}
 
 In order to describe the algebraic structure on the spiral exact sequence we need:
 
 \begin{definition}
  Let be  $\mathcal{H}_{un}$ the full subcategory of
   Ho$(\mathcal{S})$ given by finite products of
    $\mathcal{H}_{un}$-Eilenberg-MacLane spaces.\\ 
 An  $\mathcal{H}_{un}$-algebra is a product preserving functor from $\mathcal{H}_{un}$ to the category of sets.\\
  Replacing Ho$(\mathcal{S}$) by Ho$\mathcal{S}_* $ one arrives at the definition of an $\mathcal{H}$-algebra.
 \end{definition}

\begin{theorem}\cite[Theorem 3.3.1.]{BRS}\label{ses-statement}
There is an isomorphism of $\mathcal{H}_{un}$-algebras 
    $$ \pi_{0}^{\sharp}(X,G)\cong\pi_{0}[X,G]$$ 
and a long exact sequence of  $\pi_0$-modules and $\mathcal{H}$-algebras
\begin{align*}
    ... \to \pi_{s-1}^{\sharp}(X,\Omega G)\to \pi_{s}^{\sharp}(X, G)\to\pi_s[X,G]\to \pi_{s-2}^{\sharp}(X,\Omega G) \to ... \\
    ... \to \pi_{2}[X,G]\to \pi_{0}^{\sharp}(X,\Omega G)\to \pi_{1}^{\sharp}(X, G)\to \pi_1[X,G] \to 0,
\end{align*}
where $\Omega$ is the internal loop functor.
\end{theorem}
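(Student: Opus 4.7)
The strategy is to produce the exact sequence by constructing a \emph{spiral cofibration sequence} inside $c\mathcal{S}^{\mathcal{G}}$ that relates the external loop functor $\Omega_{ext}$ to the internal loop functor $\Omega$, and then apply the derived mapping functor $[X,-]$ to extract a long exact sequence of homotopy groups. The key tension is that $c\mathcal{S}^{\mathcal{G}}$ carries two compatible simplicial enrichments (external, from the cosimplicial direction; internal, from $\mathcal{S}$), and on a constant cosimplicial Gem $cG$ the discrepancy between $\Omega_{ext}(cG)$ and $c(\Omega G)$ is precisely what the spiral measures.

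The first step is to set up, for each $\mathbb{F}$-Gem $G$, a natural cofibration sequence in the Reedy model structure on $c\mathcal{S}$ of the form
\[ c(\Omega G)\;\longrightarrow\; P_G \;\longrightarrow\; \Omega_{ext}(cG) \]
whose middle term is a suitably contractible path-type object and whose connecting map encodes the comparison between the two loop operations. Unfolding this sequence (i.e.\ iterating one gets a tower) produces a ``spiral'' whose associated long exact sequence in $[X,-]$ will shift degrees exactly as stated: applying $\pi_s^{\sharp}(X,-)$ to the outer terms yields $\pi_s^{\sharp}(X,\Omega G)$ and $\pi_{s+1}^{\sharp}(X,G)$, while the contribution from iterating the cofiber sequence accounts for the $\pi_s[X,G]$ term. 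The degree $-2$ shift between $\pi_{s-1}^{\sharp}(X,\Omega G)$ and $\pi_{s-2}^{\sharp}(X,\Omega G)$ in consecutive positions reflects the two-step nature of the spiral (one loop internal, one loop external).

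Next, I would verify the base case $\pi_{0}^{\sharp}(X,G)\cong\pi_{0}[X,G]$ directly: both sides compute the homotopy classes of cosimplicial maps from a Reedy cofibrant replacement of $X$ to a Reedy fibrant replacement of $cG$, and the definitions of the external loop and the smash structure on $[X,G]$ give the same $\mathcal{H}_{un}$-algebra structure in degree zero. For the algebraic structure on the rest of the sequence, I would use that each Gem $G$ is canonically a product of Eilenberg-MacLane spaces, so $[X,G]$ inherits an $\mathcal{H}$-algebra structure (Gems being a test category for this) and $\pi_0[X,G]$ an $\mathcal{H}_{un}$-algebra structure; naturality of the spiral cofibration in $G$ propagates this structure through the entire long exact sequence, with the $\pi_0$-module structure coming from the action of $[X,cG]$ via composition with maps $G\to G$ in $\mathcal{H}_{un}$.

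The main obstacle, and the heart of the argument, lies in constructing the spiral cofibration sequence itself and verifying that its connecting homomorphisms assemble into the desired six-term pattern with the correct algebraic structure. This requires carefully analyzing how the external simplicial structure acts on constant cosimplicial Gems and showing that the comparison map $c(\Omega G)\to \Omega_{ext}(cG)$ fits into a natural (in $G$) cofiber sequence of Reedy-cofibrant objects whose third term is the right iterate of Gems. Once this is in place, the long exact sequence follows by applying $[X,-]$ and invoking the standard properties of the resolution model structure $c\mathcal{S}^{\mathcal{G}}$.
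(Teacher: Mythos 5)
First, a point of reference: the paper does not prove this statement at all --- it is quoted verbatim from \cite[Theorem 3.3.1]{BRS}, so there is no internal proof to compare against. Your overall strategy (a natural comparison sequence between the internal loop object $c(\Omega G)$ and the external loop object $\Omega_{ext}(cG)$, followed by an application of the derived mapping functor $[X,-]$ in the resolution model structure) is indeed the shape of the argument carried out in [BRS], so the proposal is aimed in the right direction.

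However, there is a genuine gap at the heart of your construction. You posit a cofibration sequence $c(\Omega G)\to P_G\to \Omega_{ext}(cG)$ in which the middle term $P_G$ is ``a suitably contractible path-type object.'' If $P_G$ is contractible, then applying $[X,\Omega_{ext}^{s}(-)]$ to this sequence yields isomorphisms $\pi_{s-1}^{\sharp}(X,\Omega G)\cong \pi_{s}^{\sharp}(X,G)$ in every degree, and the resulting long exact sequence has no slot in which the groups $\pi_s[X,G]$ could appear; the entire content of the spiral exact sequence is precisely that these two families of natural homotopy groups are \emph{not} isomorphic and that the defect is measured by $\pi_s[X,G]$. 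In the actual argument the middle term of the spiral (co)fibration sequence is a non-contractible object, built from the external simplicial structure on the constant cosimplicial Gem, whose derived mapping space out of $X$ computes $\pi_s[X,G]$; identifying that object and verifying that its mapping groups are the $E_2$-homotopy groups (together with the degree-two shift onto $\pi_{s-2}^{\sharp}(X,\Omega G)$, which you assert but do not derive) is the real work of the proof. Your phrase ``the contribution from iterating the cofiber sequence accounts for the $\pi_s[X,G]$ term'' conceals exactly this missing step. A secondary issue: the sequence must be a (co)fibration sequence in the resolution model structure $c\mathcal{S}^{\mathcal{G}}$, where $\pi^{\sharp}$ and $\pi_*[X,G]$ live, not merely in the Reedy structure. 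The verification of the degree-zero isomorphism and the propagation of the $\mathcal{H}$-algebra and $\pi_0$-module structures by naturality in $G$ are plausible as sketched, but they rest on the unconstructed sequence.
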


\begin{remark} An $\mathcal{H}_{un}$-algebra is the same thing as an unstable algebra \cite[Theorem B.3.3]{BRS}. The $\pi_0$-module structure on the terms  $\pi_s[X,G]$ corresponds to the usual action of the unstable algebra $\pi_0 H^* (X)$ on the unstable module $\pi_s H^* (X)$ by \cite[Corollary B.3.5]{BRS}.
\end{remark}

 
 \section{Moduli spaces of n-stages}  \label{Mod}For $C\in \mathcal{CA}$ let $\mathcal{M}_{Top}(C)$ be the moduli space of all topological realizations of $C$. It is defined as the classifying space of the category whose objects are the spaces with homology isomorphic to $C$ and whose morphisms are the homology equivalences. The space of marked realizations $\mathcal{M}^{'}_{Top}(C)$ of $C$ is a rigidification of  $\mathcal{M}_{Top}(C)$.
 It is defined as the classifying space of the category whose objects are realizations $X$ together with an isomorphism $\sigma:C\to H_* (X)$ in  
 $\mathcal{CA}$. Morphisms $f:(X;\sigma)\to (Y,\tau)$ are homology equivalences $f:X\to Y$ such hat $H_* (F)\sigma= \tau$ holds.
 
  In order to decompose $\mathcal{M}_{Top}$ one replaces spaces by resolutions in $c\mathcal{S}^{\mathcal{G}}$. The decomposition is defined by a Postnikov decomposition in the cosimplicial direction in  $c\mathcal{S}^{\mathcal{G}}$.  To make this precise we need.\\
 
 \begin{definition}\label{def:n-stage}
A cosimplicial space $X$ is called
 a potential $n$-stage for $C$ if 
\begin{itemize}
 \item[(a)]

$\pi^s H_*(X) \cong \left\{
                   \begin{array}{cl}
                       C      & s = 0 \\ 
                       C[n+1] & s=n + 2 \\
                       0      & \hbox{otherwise, }
                   \end{array}    \right.$

where the isomorphisms are between unstable coalgebras and $C$-comodules respectively.
 \item[(b)] $\pi^{\sharp}_{s}(X,G) = 0$ for $s > n$ and $G\in \mathcal{G}$.
  \end{itemize} 
 A cosimplicial space $X$ is called an $\infty$-stage for $C$ if \\
  $\pi^s H_*(X) \cong \left\{
                   \begin{array}{cl}
                       C      & s = 0 \\ 
                      
                       0      & \hbox{otherwise, }
                   \end{array} 
                                        \right.$\\
                                        
                                        A resolution for $C$ is a fibrant $\infty $-stage for $C$.
  \end{definition}
\begin{remark}If $X$ is a resolution of $C$ in $c\mathcal{S}^{\mathcal{G}}$, then the $n$-skeleton functor  $sk_n X$ is an $(n-1)$-stage. More generally for $m\geq n$, the derived $n$-skeleton of an $m$-stage is an $(n-1)$-stage.
\end{remark}
\

 \begin{definition}\label{def:marked n-stage}
 Fix a cofibrant object $L(C,0)$. A marked $n$-stage is an $n$-stage $X$ together with a morphism $\sigma:L(C,0)\to X$ which induces an isomorphism on $\pi^0$.
 \end{definition}
The following results \ref{23}-\ref{framed} can be found in \cite[Section 7.1-7.3]{BRS}.
For $n\leq \infty$ there are moduli spaces of  $n$-stages and of marked $n$-stages 
 $\mathcal{M}_{n}(C)$, $\mathcal{M}^{'}_{n}(C)$ defined as classifying spaces of the categories of $n$-stages and marked $n$-stages respectively. In  case $C$ is simply connected there are equivalences \cite[]{BRS}
 \begin{equation}\label{23}\mathcal{M}_{Top}(C)\simeq \mathcal{M}_{\infty}(C)\hspace{0.6cm}
 \mathcal{M}^{'}_{Top}(C)\simeq \mathcal{M}^{'}_{\infty}(C) \end{equation} In general there are  equivalences
 \begin{equation}\label{33} \mathcal{M}_{\infty}(C)\simeq holim \mathcal{M}_{n}(C)
\hspace{0.6cm} \mathcal{M}^{'}_{\infty}(C)\simeq holim \mathcal{M}_{n}^{'}(C). \end{equation}
For $n=0$ one has 
 \begin{equation}\label{0stage}\mathcal{M}_{0}(C)\simeq BAut(C)\hspace{0.6cm}
 \mathcal{M}^{'}_{0}(C)\simeq * \end{equation}
 
The spaces $\mathcal{M}_n(C)$ and $\mathcal{M}^{'}_n(C)$ are of the homotopy type of the disjoint union  $\sqcup BAut^h(X)$ where $X$ runs through all unmarked and marked n-stages for $C$ respectively. 
 
 There is a homotopy fiber sequence \cite[p.9,7.3.1.]{BRS} \begin{equation}\label{FS}\mathcal{M}^{'}_{n}(C)\to \mathcal{M}_{n}(C)\to BAut(C).\end{equation}

The derived $n$- skeleton functor defines homotopy fiber sequences
connecting the moduli spaces of $n$ and of $(n-1)$-stages. 

 

 

Let \begin{equation}\label{89}\tilde{\mathcal{AQ}}_C^{n+2}(C,C[n])=\mathcal{AQ}_C^{n+2}(C,C[n])// Aut_C(C[n])\end{equation} be the homotopy quotient by the action of $Aut_C(C[n])$.
There are  homotopy pullback squares \cite[p.99]{BRS}

\begin{equation}\label{15} 
  \xymatrix{
  \mathcal{M}'_n(C) \ar[d] \ar[r]^-{} & \mathcal{M}_n(C) \ar[d] \ar[r] & BAut_C(C[n]) \ar[d]\\
  \mathcal{M}'_{n-1}(C) \ar[r]^-{} & \mathcal{M}_{n-1}(C) \ar[r] & 
 \tilde{\mathcal{AQ}}_C^{n+2}(C,C[n]) }
 \end{equation}
 where the map on the right is defined by the zero Andr\'e-Quillen cohomology class. 
The fiber of this map is \[\Omega \mathcal{AQ}^{n+2}_C(C; C[n]) \simeq \mathcal{AQ}^{n+1}_C(C; C[n])). \]

Consequently, for each marked (n-1)-stage $Y$
 there is a homotopy fiber sequence fibre sequence of the form
 \begin{equation}\label{43}\mathcal{A}^{n+1}(C,C[n]) \to \mathcal{M}_n^{'}(C)_{Y}\to BAut_{\sharp}(Y)\end{equation}
 where $\mathcal{M}_n^{'}(C)_{Y}$ is the homotopy pullback of the map from $*$ which picks out the component  corresponding to $Y$.  Choose a base point in $\mathcal{M}_n^{'}(C)_{Y}$ that is fix  an $n$-stage $X$ which restricts to $Y$.\\
  The looped homotopy fiber sequence of \ref{43} is then of the form
  \begin{equation}\label{45}\mathcal{A}^{n}(C,C[n]) \to Aut^h_{\sharp}(X)\to Aut^h_{\sharp}(Y)\end{equation}
 Consider 
\begin{equation}\label{frameddef}\mathcal{M}'_n(C) \longrightarrow \mathcal{M}_n(C) \to B ( Aut_C(C[n+1]).\end{equation}
where the last map is given by the functor $\pi^{*} H_* (-)$.

Define the moduli space of framed potential $n$-stages $\mathcal{M}^f_n(C)$ to be the homotopy fiber of this composite map.   
The analog of \ref{0stage} is 
\begin{equation}\label{framed0}
\mathcal{M}^f_0(C) \simeq Aut_C(C[1]).
\end{equation}
There is a homotopy pullback square  \cite[p.100]{BRS}
 \begin{equation}\label{framed}
 \xymatrix{
\mathcal{M}^f_n(C) \ar[rr] \ar[d] && \ast \ar[d]^0 \\
\mathcal{M}^f_{n-1}(C) \ar[rr] && \mathcal{AQ}^{n+2}_C(C, C[n])}
\end{equation}

 

  
 

 \begin{lemma} Let $X$ be a marked $n$-stage for $C$ and $Z$ the underlying $n$-stage. There is an exact sequence
 \[1\to \pi_1 (BAut^h(X))\to \pi_1 (BAut^h(Z))\to Aut(C)\] 
 \end{lemma}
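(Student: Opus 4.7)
The plan is to deduce the exact sequence from the homotopy fiber sequence (\ref{FS}), namely
\[
\mathcal{M}'_n(C)\longrightarrow\mathcal{M}_n(C)\longrightarrow BAut(C),
\]
by restricting to the appropriate connected components and invoking the long exact sequence of homotopy groups.

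First I would pass to the component of $\mathcal{M}_n(C)$ containing $Z$. Since $\mathcal{M}_n(C)$ decomposes as $\sqcup BAut^h(Z')$ over the equivalence classes of (unmarked) $n$-stages, the component containing $Z$ is precisely $BAut^h(Z)$. Pulling back (\ref{FS}) along the inclusion $BAut^h(Z)\hookrightarrow \mathcal{M}_n(C)$ yields a homotopy fiber sequence
\[
F\longrightarrow BAut^h(Z)\longrightarrow BAut(C),
\]
where $F$ is the preimage of the component of $Z$ in $\mathcal{M}'_n(C)$. The set $\pi_0(F)$ enumerates the markings of $Z$ (up to marked equivalence), and the component of $F$ containing the chosen marked $n$-stage $X$ is exactly $BAut^h(X)$, by the analogous decomposition $\mathcal{M}'_n(C)\simeq \sqcup BAut^h(X')$.

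Next I would write down the long exact sequence of homotopy groups for this fiber sequence based at $X$:
\[
\pi_2(BAut(C))\longrightarrow \pi_1(BAut^h(X))\longrightarrow \pi_1(BAut^h(Z))\longrightarrow \pi_1(BAut(C)) = Aut(C).
\]
The key observation is that $Aut(C)$ is a discrete group, so $BAut(C)$ is a $K(Aut(C),1)$ and hence $\pi_2(BAut(C))=0$. This forces injectivity of $\pi_1(BAut^h(X))\to \pi_1(BAut^h(Z))$, which together with exactness at $\pi_1(BAut^h(Z))$ yields the claimed sequence
\[
1\longrightarrow \pi_1(BAut^h(X))\longrightarrow \pi_1(BAut^h(Z))\longrightarrow Aut(C).
\]

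No step looks genuinely hard: everything is a bookkeeping application of the component decompositions of $\mathcal{M}_n(C)$ and $\mathcal{M}'_n(C)$ and of the fiber sequence (\ref{FS}). The only point that requires a moment of care is identifying the correct component of the homotopy fiber $F$, i.e. checking that it is $BAut^h(X)$ rather than a disjoint union of $BAut^h(X')$'s for different markings $X'$ of $Z$; this is handled by choosing the basepoint to be $X$ and using that the long exact sequence of homotopy groups only sees the basepoint component.
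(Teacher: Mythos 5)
Your proposal is correct and follows the same route as the paper, which deduces the lemma directly from the long exact homotopy sequence of the fiber sequence (\ref{FS}); you have merely spelled out the component bookkeeping and the observation that $\pi_2(BAut(C))=0$ because $Aut(C)$ is discrete, both of which the paper leaves implicit.
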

 \begin{proof} The assertion follows from the long exact homotopy sequence of \ref{FS}.$\square$
 \end{proof}
 
 \begin{corollary} The group $\pi_1 (BAut^h(X))\cong \pi_0 Aut^h(X)$ is the subgroup of homotopy classes in  $\pi_0 Aut^h(Z)$ which induce the identity on $C$.
 \end{corollary}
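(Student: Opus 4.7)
The plan is to read the claim directly off the exact sequence of the preceding lemma. Since for any $n$-stage $W$ the simplicial monoid $Aut^h(W)$ is group-like, one has the standard identification $\pi_1(BAut^h(W)) \cong \pi_0 Aut^h(W)$, applied to both $W = X$ and $W = Z$. This translates the preceding lemma into an exact sequence
\[
1 \longrightarrow \pi_0 Aut^h(X) \longrightarrow \pi_0 Aut^h(Z) \longrightarrow Aut(C),
\]
in which the first map is the forgetful map (forget the marking) and the injectivity is what the preceding lemma provides.

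The remaining step is to identify the right-hand map as the homomorphism sending a homotopy class $[f] \in \pi_0 Aut^h(Z)$ to the induced coalgebra automorphism $\pi^0 H_*(f)$ of $C = \pi^0 H_*(Z)$. This is a direct unwinding of the construction of the fiber sequence \eqref{FS}: the last arrow $\mathcal{M}_n(C)\to BAut(C)$ is by definition induced by the functor $\pi^0 H_*$, so after basepointing at $Z$ and passing to $\pi_1$ it becomes exactly the homomorphism of path component groups describing the action on homology.

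Once this identification is in place, the corollary is immediate from exactness at $\pi_0 Aut^h(Z)$: the image of $\pi_0 Aut^h(X)$ equals the kernel of the map to $Aut(C)$, which by the preceding identification is precisely the subgroup of homotopy classes in $\pi_0 Aut^h(Z)$ inducing the identity on $C$. There is no real obstacle; the only ingredient requiring attention is checking that the third map really is the $H_*$-action, and this is built into the definition of \eqref{FS}. Hence the argument is a short read-off from the lemma and the fiber sequence.
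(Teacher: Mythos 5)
Your argument is correct and matches the paper's: the corollary is read off from the exact sequence of the preceding lemma (itself the long exact homotopy sequence of \eqref{FS}), using group-likeness to identify $\pi_1(BAut^h(-))$ with $\pi_0 Aut^h(-)$ and the fact that the map to $BAut(C)$ is induced by $\pi^0 H_*$. Nothing further is needed.
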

From \ref{45} we get 
 \begin{proposition}\label{long} Let $X$ be a marked $n$-stage for $C$ which restricts to the marked $(n-1)$-stage $Y$. Then there is a long exact sequence 
 \begin{equation}\ldots \to \pi_s \mathcal{AQ}^n_C(C;C[n])\to 
 \pi_s (Aut^h_{\sharp}(X))\to  \pi_s (Aut^h_{\sharp}(Y))\to \ldots
 \end{equation}
 \end{proposition}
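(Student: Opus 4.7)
The proposition is essentially a formal consequence of what has just been set up, so my plan is to unwind the chain (15) $\rightsquigarrow$ (43) $\rightsquigarrow$ (45) and then apply the long exact sequence of homotopy groups for a homotopy fiber sequence.

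First I would recall that by the homotopy pullback square (15), the homotopy fiber of the map $\mathcal{M}'_n(C) \to \mathcal{M}'_{n-1}(C)$ agrees with the homotopy fiber of the map $B\mathrm{Aut}_C(C[n]) \to \tilde{\mathcal{AQ}}_C^{n+2}(C,C[n])$ classifying the zero Andr\'e--Quillen cohomology class. This fiber was identified as $\Omega\mathcal{AQ}_C^{n+2}(C,C[n]) \simeq \mathcal{AQ}_C^{n+1}(C,C[n])$. Restricting over the component of the marked $(n{-}1)$-stage $Y$, this produces the homotopy fiber sequence (43):
\[
\mathcal{AQ}_C^{n+1}(C,C[n]) \longrightarrow \mathcal{M}_n'(C)_Y \longrightarrow B\mathrm{Aut}^h_\sharp(Y).
\]

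Next, I would pick a basepoint in $\mathcal{M}_n'(C)_Y$, namely the marked $n$-stage $X$ extending $Y$. Looping the fiber sequence (43) around this basepoint (and using that $\Omega B\mathrm{Aut}^h_\sharp(Z) \simeq \mathrm{Aut}^h_\sharp(Z)$ since $\mathrm{Aut}^h_\sharp(Z)$ is group-like, and that looping an Andr\'e--Quillen space shifts the degree by one via $\Omega\mathcal{AQ}^{n+1} \simeq \mathcal{AQ}^n$) yields the fiber sequence (45):
\[
\mathcal{AQ}_C^n(C,C[n]) \longrightarrow \mathrm{Aut}^h_\sharp(X) \longrightarrow \mathrm{Aut}^h_\sharp(Y).
\]

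Finally, I would apply the long exact sequence of homotopy groups for this homotopy fiber sequence to obtain the asserted sequence. There is no real obstacle here beyond bookkeeping of basepoints and the already established identifications (\ref{AQHG}) of the homotopy groups of $\mathcal{AQ}_C^n(C,C[n])$; the only subtle point to check is that the looping in the previous step is compatible with choosing $X$ as basepoint, so that the connecting map really lands in $\pi_s$ of the Andr\'e--Quillen space shifted down by one.
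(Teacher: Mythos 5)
Your proposal is correct and follows exactly the route the paper takes: the paper derives Proposition \ref{long} directly from the looped fiber sequence \ref{45}, which it obtains from \ref{43} and the homotopy pullback square \ref{15}, just as you describe. Your version merely spells out the basepoint bookkeeping and the identification $\Omega\mathcal{AQ}^{n+1}\simeq\mathcal{AQ}^{n}$ that the paper leaves implicit.
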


\section{A coalgebraic Miller spectral sequence}

Let $G$ be the cofree unstable coalgebra functor right adjoint to the forgetful functor to graded vector spaces. 
For $C \in \mathcal{CA}$ there is an induced adjunction between the under-categories

\begin{equation}J: C / \mathcal{CA} \rightleftarrows  J(C) / nVec :G.\end{equation} 

This adjunction extends to a Quillen adjunction on cosimplical objects. Use \cite[4.3.4.]{BRS} to see this.
 Here the model categories are the resolution model categories 
 defined by the group objects $H_* K(\mathbb{F},n)$ and the standard model category respectively. 
 There is a natural isomorphism $G(V)\cong H_* (K(V))$ with
  $K(V)=\prod_{n\geq 0} K(V_n ,n).$
 The functor  $G$   defines a comonad  on   $cnVec$ still denoted by $G$.
 
 \begin{proposition} There is a comonad on bigraded vector spaces \[\mathcal{G}:nnVec \to nnVec\] such that \[\mathcal{G}(\pi^* (V))=\pi^* (G(V))\] for any $V\in cnVec$.
 Moreover, for $C\in c\mathcal{CA}$ the cohomology $\pi^* (C)$ is a coalgebra over  $\mathcal{G}$.
 \end{proposition}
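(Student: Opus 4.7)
Proof plan. The heart of the proposition is to show that $\pi^{*}G(V)$ depends only on $\pi^{*}V$ in a functorial way, compatibly with the comonad structure on $G$. Once that is established, one defines $\mathcal{G}(\pi^{*}V) := \pi^{*}G(V)$ and transports the structure maps.

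First, I would decompose $G$ explicitly. Since $G(V) = H_{*}(K(V))$, the classical Serre--Cartan computation of the $\mathbb{F}$-homology of a product of Eilenberg--MacLane spaces expresses $G(V)$ as a tensor product of polynomial and exterior algebras whose generators are indexed by admissible sequences of dual Steenrod operations applied to $V$. Thus $G$, as a functor on $nVec$, is built naturally out of direct sums, tensor powers and shifts of $V$, and the same decomposition works pointwise on $cnVec$. Next, the Eilenberg--Zilber theorem implies that $\pi^{*}$ commutes with finite tensor products (and evidently with direct sums) of cosimplicial vector spaces, up to natural isomorphism. Combining these steps produces a natural isomorphism $\pi^{*}G(V) \cong \mathcal{G}(\pi^{*}V)$, where $\mathcal{G}: nnVec \to nnVec$ is defined by applying the same algebraic recipe to the bigraded vector space $\pi^{*}V$.

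The counit $G \Rightarrow \mathrm{id}$ and comultiplication $G \Rightarrow GG$ then yield, after applying $\pi^{*}$ and iterating the natural isomorphism of Step~2, natural transformations $\mathcal{G} \Rightarrow \mathrm{id}$ and $\mathcal{G} \Rightarrow \mathcal{G}\mathcal{G}$. The comonad axioms are inherited from those of $G$ by functoriality of $\pi^{*}$. For the second part, given $C \in c\mathcal{CA}$ the unstable coalgebra structure endows the underlying object of $cnVec$ with a $G$-coalgebra structure via the adjunction unit; applying $\pi^{*}$ then gives the required $\mathcal{G}$-coalgebra structure on $\pi^{*}C$.

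The main obstacle is the naturality and compatibility in Step~2: one must check that the identification $\pi^{*}G(V) \cong \mathcal{G}(\pi^{*}V)$ is strong enough to accommodate the iterated identification $\pi^{*}GG(V) \cong \mathcal{G}\mathcal{G}(\pi^{*}V)$ and to commute with the comultiplication, rather than merely holding pointwise. I expect this is precisely where the $\infty$-categorical technology announced for the appendix is used, most naturally by realising $\mathcal{G}$ as the descent of $G$ to the $\infty$-category of cosimplicial vector spaces localised at cohomotopy equivalences, whose homotopy category recovers $nnVec$. In that framing the comonad structure is automatic, and it suffices to verify that $G$ preserves the relevant weak equivalences and that the induced endofunctor on $nnVec$ matches the explicit $\mathcal{G}$ built from the Eilenberg--Zilber decomposition above.
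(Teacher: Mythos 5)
There is a genuine gap in your Step 2, and it sits exactly at the characteristic-$p$ subtlety that the proposition is really about. The paper's own proof is a one-line citation to the dual of \cite[5.1]{G} (Goerss), and what that result actually says is \emph{not} that $\mathcal{G}$ is ``the same algebraic recipe'' applied to $\pi^*V$. Your decomposition of $G(V)=H_*(K(V))$ via the Serre--Cartan basis is not natural in $V$: the functor $V_n\mapsto H_*(K(V_n,n);\mathbb{F}_p)$ contains divided-power pieces (invariants $(V_n^{\otimes p})^{\Sigma_p}$ and their iterates), and these are subfunctors of tensor powers but not natural direct summands in characteristic $p$. Eilenberg--Zilber/K\"unneth does let $\pi^*$ pass through genuine tensor products of cosimplicial vector spaces, but it does \emph{not} commute with $\Sigma_n$-invariants, so $\pi^*$ of the divided-power constituents of $G(V)$ is not obtained by applying those constituents to $\pi^*V$. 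This is precisely where Bousfield's higher divided power operations arise: the correct functor $\mathcal{G}$ on bigraded vector spaces is strictly larger than the naive one, containing new operations that mix the cosimplicial and internal gradings. So the natural isomorphism $\pi^*G(V)\cong\mathcal{G}(\pi^*V)$ you assert in Step 2 fails for the $\mathcal{G}$ you construct.

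The argument that does work (and is what Goerss, following Dold, carries out) is: $G$ applied levelwise preserves cosimplicial homotopy equivalences (simplicial homotopies are absolute, categorical data, preserved by any functor), every object of $cnVec$ over a field splits as a product of Eilenberg--MacLane objects, and hence $\pi^*G(V)$ depends naturally on $\pi^*V$; one then \emph{defines} $\mathcal{G}(W):=\pi^*G(K(W))$ for a cosimplicial Eilenberg--MacLane model $K(W)$ of the bigraded vector space $W$, and identifies it explicitly via the Cartan--Bousfield computation. Your Step 3 (transporting counit, comultiplication, and the coalgebra structure on $\pi^*C$ from the adjunction) is fine once the correct $\mathcal{G}$ is in place. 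One further misattribution: the $\infty$-categorical appendix of the paper is used for the existence of the right adjoint $Ab_C$ in Theorem 4.5, not for this proposition, so your proposed descent-theoretic repair is not what the paper relies on here --- and in any case it would still leave you needing the Dold/Bousfield identification of the induced endofunctor on $nnVec$, which is not the Eilenberg--Zilber one.
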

 \begin{proof} This follows from the dual of \cite[5.1]{G} as in \cite[p.131-132]{G}. $\square$
 \end{proof}
 

\begin{definition}
 Let \begin{enumerate}
\item $\mathcal{G}_{coA}$ the category of coalgebras for the comonad $\mathcal{G}$.\\
\item   $coGr\mathcal{G}_{coA}$ be the category of coabelian cogroup objects in $coAlg\mathcal{G}$.\\

\end{enumerate}\end{definition}

\begin{remark} Any $C\in \mathcal{CA}$ viewed as a constant cosimplicial object defines an object in 
$\mathcal{G}_{coA}$.
\end{remark}


\begin{theorem}\label{adjoint} Let $C\in \mathcal{CA}$.
 under $C$.
The inclusion functor 
\[i:C/coGr\mathcal{G}_{coA}\to C/\mathcal{G}_{coA}\]

admits a right adjoint $Ab_C$
\end{theorem}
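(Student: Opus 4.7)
The plan is to invoke the presentable adjoint functor theorem: it suffices to verify that both the source and target of the inclusion $i$ are presentable $\infty$-categories and that $i$ preserves small colimits. This fits with the paper's remark that the appendix depends on $\infty$-categorical technology, and it is the natural dualization of the classical construction of abelianization for algebras over a monad, where the inclusion of abelian group objects preserves \emph{limits} and so admits a \emph{left} adjoint.

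First I would establish presentability of $\mathcal{G}_{coA}$. The cofree unstable coalgebra functor $G$ is a right adjoint between presentable categories, and an analysis of the construction of $\mathcal{G}$ from $G$ via $\pi^*$ should show that $\mathcal{G}$ is accessible as a comonad on $nnVec$. Standard $\infty$-categorical results on comonadic descent then imply that the category $\mathcal{G}_{coA}$ of coalgebras is presentable; the slice $C/\mathcal{G}_{coA}$ inherits this. Finally, one checks that $C/coGr\mathcal{G}_{coA}$ sits inside $C/\mathcal{G}_{coA}$ as an accessible subcategory, by cutting it out with the coassociativity, counitality, cocommutativity, and coinverse axioms, all of which are equalizer-type conditions on diagrams built from pushouts. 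Second, I would verify that $i$ preserves colimits. A coabelian cogroup structure under $C$ is encoded by a comultiplication $X \to X \sqcup_C X$ together with counit and coinverse maps, satisfying the standard axioms. Since colimits in the slice $C/\mathcal{G}_{coA}$ are computed on underlying objects and the pushout $\sqcup_C$ commutes with colimits in each variable, the structure maps of any diagram of coabelian cogroup objects assemble to equip the colimit with a canonical coabelian cogroup structure. Hence colimits in $C/coGr\mathcal{G}_{coA}$ agree with those computed in $C/\mathcal{G}_{coA}$, and $i$ preserves them.

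Applying the presentable adjoint functor theorem then produces the desired right adjoint $Ab_C$. The principal obstacle lies in the presentability of $\mathcal{G}_{coA}$: for a general comonad on a presentable category, the category of coalgebras need not itself be presentable, so one must genuinely verify that $\mathcal{G}$ is accessible. This is delicate because $\mathcal{G}$ is not defined directly on $nnVec$ but rather through the cohomology $\pi^*$ of the cosimplicial cofree functor $G$, so its interaction with filtered colimits must be tracked carefully through the cosimplicial construction. I expect this is precisely where the heavy $\infty$-categorical machinery of the appendix — presumably relating $\mathcal{G}_{coA}$ to the $\infty$-category of coalgebras over an $\infty$-comonad induced by $G$, and exploiting the adjointness of $G$ with its forgetful functor — is deployed.
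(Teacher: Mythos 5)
Your overall skeleton is exactly the paper's: show that both $C/\mathcal{G}_{coA}$ and $C/coGr\mathcal{G}_{coA}$ are presentable, show that $i$ preserves small colimits, and invoke the presentable adjoint functor theorem to get $Ab_C$ (the paper routes this through Corollary \ref{adjoint2}). Your colimit-preservation step is also essentially the paper's: colimits in the cogroup category are created by the forgetful functor, which the paper proves by dualizing the observation that representable $Gr$-valued presheaves are closed under limits. The problem is that the two inputs carrying all the weight are left open or sketched with arguments that would not go through as stated.

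First, the accessibility of the comonad: you correctly flag this as the principal obstacle, but "tracking $\pi^*$ through the cosimplicial construction" is not where the difficulty lies and does not by itself give the result. The paper proves that $G$ (hence $\mathcal{G}$) is finitary by a genuinely coalgebra-theoretic approximation argument: a fundamental theorem of coalgebras for unstable coalgebras (every finite-dimensional subspace of $C\in\mathcal{CA}$ lies in a finite-dimensional unstable subcoalgebra, Theorem \ref{FT}), promoted to cosimplicial objects in Lemma \ref{cofin}, which is what allows a map from a compact object into $GV$ to be factored through $Gj$ for a compact $j:M\to V$. Second, the accessibility of $coGr(\mathcal{D})$: cutting out coabelian cogroup objects by "equalizer-type conditions" expressing coassociativity, counitality, etc.\ is a $1$-categorical move that does not transport to the $\infty$-categorical setting of the appendix, where a coabelian cogroup object is a functor to group-like $E_{\infty}$-spaces with corepresentable underlying functor, i.e.\ an infinite tower of coherence data rather than a finite list of axioms imposed as properties. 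The paper instead establishes accessibility via the Lawvere-theoretic equivalence $coGr(\mathcal{D})\simeq Fun^{\sqcup}(\mathcal{F},\mathcal{D})$ with $\mathcal{F}$ the finitely generated free abelian group objects (Lemma \ref{equi0}), and then shows $Fun^{\sqcup}(\mathcal{B},\mathcal{C})$ is accessible for $\mathcal{B}$ essentially small and $\mathcal{C}$ accessible by writing it as an end of accessible categories (Corollary \ref{isacc}). Without substitutes for these two lemmas your argument does not close.
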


The proof by an  $\infty$-categorical argument can be found in the appendix. It is more involved than the one for the  counterpart for algebras.
In fact it can be shown that a construction   dual to the module of relative K\"ahler differentials gives $Ab_C$. The proof needs a detailed study of Bousfields higher divided power operations \cite{Bou1} and will be given in a later paper \cite{St}. 
 For the category of cocommutative coalgebras such a result is obtained  by  Slomińska  in \cite{Sm}. 

 
 
 \begin{remark} In \cite{M}, and \cite{G} a left adjoint was constructed in the dual situation of algebras. This played an important role  in the derivation of the inverse Adams spectral sequence which was instrumental in the proof of the Sullivan conjecture.
 \end{remark}

 
 \begin{theorem} Let $B\in c(C/\mathcal{CA})$ with  $C\in \mathcal{CA}$ and $M\in \mathcal{V}C$ .  There is a convergent spectral sequence 
\[E_2^{p,q}\cong L^p_{\mathcal{G}} (Hom_{C/coGr\mathcal{G}_{coA}}(i( M[q]),Ab_C (\pi^* (B )))\Longrightarrow AQ^{p+q}_C (B,M)\]\label{SS}
\end{theorem}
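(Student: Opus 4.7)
The plan is to build the spectral sequence as a hypercohomology-type spectral sequence arising from a cotriple resolution of $B$ coupled with the twisted Eilenberg--MacLane description of Andr\'e--Quillen cohomology in $c(C/\mathcal{CA})$.

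First, I would form the cotriple $T = JG$ on $c(C/\mathcal{CA})$ coming from the Quillen adjunction $J : C/\mathcal{CA} \rightleftarrows J(C)/nVec : G$ extended to cosimplicial objects, and consider the augmented simplicial resolution $T^{\bullet+1}(B) \to B$. Each simplicial level $T^{r+1}(B)$ lies in the image of $J$, so applying $\pi^*$ yields, levelwise, a cofree $\mathcal{G}$-coalgebra. The resulting simplicial object is therefore a resolution of $\pi^*(B)$ in $C/\mathcal{G}_{coA}$ by cofree objects, and is suitable for computing cotriple-derived functors over $\mathcal{G}$.

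Second, I would use the twisted Eilenberg--MacLane objects $K_C(M,q)$ to compute Andr\'e--Quillen cohomology via derived mapping spaces. Mapping $K_C(M,q)$ into $T^{\bullet+1}(B)$ and combining the cotriple simplicial direction with the intrinsic cosimplicial direction of $B$ produces a bisimplicial object whose totalization carries the groups $AQ^{p+q}_C(B,M)$ in appropriate total degrees, via the identification \eqref{AQHG}. Filtering this bisimplicial object by cotriple degree then yields the desired spectral sequence. On the $E_1$-page, the adjunction of Theorem \ref{adjoint} identifies the $q$-th column in cotriple degree $p$ with $Hom_{C/coGr\mathcal{G}_{coA}}(i(M[q]), Ab_C(\pi^*(T^{p+1}(B))))$, using that $\pi^*(T^{p+1}(B))$ is a cofree $\mathcal{G}$-coalgebra. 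Passing to $E_2$ by taking cohomology in the cotriple direction then produces, by the very definition of cotriple-derived functors, the term $L^p_{\mathcal{G}}(Hom_{C/coGr\mathcal{G}_{coA}}(i(M[q]), Ab_C(\pi^*(B))))$ stated in the theorem.

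The principal obstacle is a clean identification of the $E_1$-term: one must verify compatibility between the Hom complex obtained from maps into $K_C(M,q)$ in the cosimplicial resolution model category $c(C/\mathcal{CA})$ and the purely algebraic Hom in $C/coGr\mathcal{G}_{coA}$ supplied by Theorem \ref{adjoint}. This is where the comonadic nature of $\mathcal{G}$ and the fact that $Ab_C$ is only defined after passing to $\mathcal{G}$-coalgebras (and not on arbitrary cosimplicial unstable coalgebras) enter in an essential way. A secondary obstacle is establishing strong convergence; this should follow from the half-plane structure of the cotriple filtration together with the vanishing of $\pi_s \mathcal{AQ}^n_C$ outside the range $0 \leq s \leq n$ recorded in \eqref{AQHG}, which ensures that only finitely many terms contribute in each total degree.
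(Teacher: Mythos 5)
Your overall strategy coincides with the paper's: resolve $B$ by cofree objects coming from the adjunction $J \dashv G$, filter the resulting double object by resolution degree, identify the $E_1$-page by adjunction together with $\pi^*G \cong \mathcal{G}\pi^*$, and read off cotriple derived functors at $E_2$. But two points in your execution need repair. First, the variance of the resolution: since $G$ is a \emph{right} adjoint, the canonical resolution is a \emph{cosimplicial} object coaugmented from $B$, namely $B \to G^{\bullet+1}(B)$ (after diagonalizing with the internal cosimplicial direction of $B$ one gets $G^{n+1}(B^n)$ in degree $n$); an augmented simplicial object $T^{\bullet+1}(B)\to B$ of the kind you describe does not exist here, because the structure maps supplied by the unit of the adjunction point from $B$ into $G(B)$, not the other way.

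Second, and more seriously, your ``principal obstacle'' --- reconciling derived mapping spaces out of $K_C(M,q)$ with the algebraic Hom in $C/coGr\mathcal{G}_{coA}$ --- is a genuine unresolved step in your argument, and it is created by an unnecessary detour. The paper never maps the twisted Eilenberg--MacLane objects into the resolution. Instead it forms the bicosimplicial \emph{abelian group}
\[U^{p,q}=Hom_{C/\mathcal{CA}}(\iota_C(M),G^{p+1}(B^q))\cong Hom_{\mathcal{V}C}(M,Ab_C(G^{p+1}(B^q))):\]
because $\iota_C(M)$ is coabelian this is already an abelian group, so the spectral sequence is just that of a first-quadrant double complex and no comparison with homotopy groups of mapping spaces is needed. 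The abutment is identified because the diagonal of the cofree cosimplicial resolution is fibrant (by Bousfield), so $\pi^{p+q}$ of $Hom_{\mathcal{V}C}(M,Ab_C(-))$ applied to it computes $AQ^{p+q}_C(B,M)$ directly from the definition, with no appeal to (\ref{AQHG}). The $E_1$-identification then proceeds by adjointing over to $J(C)/nVec$, where $Hom$ out of a fixed object commutes with $\pi^q$, invoking $\pi^*(G(V))\cong\mathcal{G}(\pi^*(V))$, adjointing back, and finally applying the adjunction of Theorem \ref{adjoint}. If you replace your mapping-space construction by these strict Homs, your outline becomes the paper's proof and the obstacle you flag disappears.
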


\begin{proof}There is a triple derived fibrant $G$-cofree resolution  of  $B$ in $c(C/\mathcal{CA})$. It derived from the   adjoint pair $G,J$ in the standard way by means of diagonalizing the bicosimplicial object
$G^{n+1}(B^m )$. Fibrancy follows from \cite{Bou}.  Thus in cosimplicial degree $n$ it is given as 
$G^{n+1}(B^n )$.  Now consider the bicosimplicial object $U$ with 
\[U^{p,q}= Hom_{C/\mathcal{CA}}(i(M),G^{p+1}(B^q ))\cong Hom_{\mathcal{V}C}(M,Ab_C (G^{p+1}(B^q )).\] Filter by $p$ to obtain a convergent spectral sequence \[\pi^p \pi^q (U)\Longrightarrow \pi^{p+q}(diag(U)).\]
Now since any fibrant resolution can be used to define $AQ^{p+q}_C(B,M)$ we find \[\pi^{p+q}(diag (U)= \pi^{p+q}(Hom_{\mathcal{V}C}(M,Ab_C 
(G^{*+1}(B^* ))=AQ^{p+q}_C(B,M).
\] so the spectral sequence converges to the right target.\\
We are left to determine the $E_2$ term.

\begin{align*}E^{p,q}_1 \cong \pi^q (Hom_{C/\mathcal{CA}}(i(M),G^{p+1}(B ))\\
\cong \pi^q (Hom_{J(C)/n Vec}(i(M),G^{p}(B ))\\
\cong  Hom_{J(C)/nVec}(i(M),\pi^q(G^{p}(B ))\\
\cong Hom_{J(C)/n Vec}(i( M),\mathcal{G}^p (\pi^* (B )^q)\\
\cong Hom_{C/\mathcal{G}_{coA}}(i( M[q]),\mathcal{G}^{p+1} (\pi^* (B )) \\
\cong Hom_{C/coGr\mathcal{G}_{coA}}(i( M[q]),Ab_C (\mathcal{G}^{p+1} (\pi^* (B )))
  \end{align*}
  
It follows that \[E^{p,q}_2\cong L^p_{\mathcal{G}} (Hom_{{C/coGr\mathcal{G}_{coA}}}(i( M[q]),Ab_C (\pi^* (B ))) \]$\square$
\end{proof}
\section{G-actions on Andr\'e-Quillen cohomology}
If a group $G$ acts on another group $A$ the $n$-th $G$-commutator group $\Gamma_G^n (A)\subset A $ is the group generated by all $\{[g,a]| g\in G, a\in\Gamma_G^{n-1}\}$ with $\Gamma_G^0 = A$ and $[g,a]=(ga)a^{-1}$. The action is nilpotent of nilpotence order $nil_G A = r$ if $r$ is the smallest integer such that $\Gamma_G^n (A)=1$.\\
We record a result of Hall for later use.  It is a vast generalization of the fact that a group of unipotent matrices is nilpotent and it was used already in \cite{DZ}. A proof can be found in \cite[p.355]{P}.

\begin{theorem}\label{Hall} Let $G$ be a group of automorphisms of another group $A$. Assume that $G$ acts nilpotently on $A$; then $G$ is nilpotent.
\end{theorem}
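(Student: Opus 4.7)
The plan is to exploit the $G$-commutator filtration on $A$ itself and use it to build a central series for $G$. Concretely, let $r = nil_G A$ and consider the $G$-lower central series
$$A = \Gamma_G^0(A) \supseteq \Gamma_G^1(A) \supseteq \cdots \supseteq \Gamma_G^r(A) = 1.$$
This is a $G$-invariant filtration, and, directly from the definition of $\Gamma_G^i$ as the subgroup generated by elements $[g,a]$ with $a \in \Gamma_G^{i-1}$, the induced $G$-action on each successive quotient $\Gamma_G^{i-1}/\Gamma_G^i$ is trivial.

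Next I would introduce, for $0 \le i \le r$, the subgroup $K_i \subseteq G$ consisting of those $g$ whose displacement map $a \mapsto g(a)\,a^{-1}$ sends $\Gamma_G^k(A)$ into $\Gamma_G^{k+i}(A)$ for every $k \ge 0$. The previous observation gives $K_0 = G$, and one has $K_r = \{1\}$ because by hypothesis $G \hookrightarrow \mathrm{Aut}(A)$ acts faithfully, so a $g$ which shifts the whole filtration by $r$ must be the identity (since $\Gamma_G^r(A) = 1$).

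The heart of the argument is the commutator identity
$$[K_i, K_j] \subseteq K_{i+j}.$$
Granting this, the chain $G = K_0 \supseteq K_1 \supseteq \cdots \supseteq K_r = 1$ is a central series for $G$ (since $[G, K_i] = [K_0, K_i] \subseteq K_i$ and in fact $[G, K_i] \subseteq K_{i+1}$ when $i \ge 1$ by a small bootstrap), which exhibits $G$ as nilpotent of class at most $r$.

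To prove the commutator identity, I would take $g \in K_i$, $h \in K_j$, $a \in \Gamma_G^k(A)$, and expand $[g,h](a)\,a^{-1}$ in terms of the displacements $g(a)a^{-1} \in \Gamma_G^{k+i}(A)$ and $h(a)a^{-1} \in \Gamma_G^{k+j}(A)$; the ``linear'' terms cancel and the cross-terms land in $\Gamma_G^{k+i+j}(A)$ because $K_i$ preserves the $i$-shifted filtration. The cleanest formalization is to pass to the graded object associated to $\Gamma_G^\bullet A$: on it, $K_i$ acts as a ``derivation of weight $i$'', and the identity becomes the familiar statement that the bracket of derivations of weights $i$ and $j$ has weight $i+j$. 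I expect the non-abelian bookkeeping here—handling associator-type correction terms via the Hall–Petresco commutator collection formula—to be the main technical obstacle; this is precisely the content of Kaluznin's stabilizer theorem, which is the form of the statement recorded in \cite[p.\,355]{P}.
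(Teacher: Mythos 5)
The paper does not actually prove this statement: it is recorded as a known theorem of Hall, and the proof is delegated entirely to Plotkin \cite[p.355]{P}. Your argument is therefore not competing with an argument in the text but reconstructing the one in the cited source, and it is indeed the standard Kalu\v{z}nin--Hall stability-group proof: pass to the $G$-lower central series of $A$, observe that $G$ stabilizes it, filter $\mathrm{Aut}(A)$ by the shift subgroups $K_i$, and use faithfulness to see $K_r=1$.

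Two points need repair before this is complete. First, your indexing discards the key fact. Triviality of the $G$-action on the quotients $\Gamma_G^{i-1}(A)/\Gamma_G^{i}(A)$ says precisely that $g(a)a^{-1}=[g,a]\in\Gamma_G^{k+1}(A)$ for $a\in\Gamma_G^{k}(A)$, i.e.\ that $G\subseteq K_1$, not merely $G=K_0$. With $G\subseteq K_1$, the identity $[K_i,K_j]\subseteq K_{i+j}$ gives $\gamma_m(G)\subseteq K_m$ for the lower central series of $G$ by induction, hence $\gamma_r(G)\subseteq K_r=1$ and $G$ is nilpotent of class at most $r-1$; no central series built from the $K_i$ themselves is needed. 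As written, the asserted ``small bootstrap'' $[K_0,K_i]\subseteq K_{i+1}$ does not follow from $[K_i,K_j]\subseteq K_{i+j}$ and is exactly the point at issue, so this is a genuine (though trivially repairable) gap. Second, the inclusion $[K_i,K_j]\subseteq K_{i+j}$ in this clean form is Kalu\v{z}nin's theorem, and its proof computes displacements modulo $\Gamma_G^{k+i+j}(A)$, which requires the terms of the filtration to be normal subgroups of $A$. With the definition of $\Gamma_G^{i}(A)$ used in the paper (generated only by the elements $[g,a]$), normality of the higher terms is not automatic; one must either verify it, enlarge the filtration as in \cite{HMR} (where the commutators $[A,\Gamma_G^{i-1}A]$ are adjoined precisely to force a central series), or fall back on P.~Hall's stability theorem for arbitrary subgroup chains, which yields nilpotency of class at most $\binom{r}{2}$ but whose proof is genuinely harder than the weight-grading heuristic you sketch. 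Neither issue affects the truth of the statement, and both are what the reference \cite[p.355]{P} takes care of.
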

 
\begin{lemma}\label{nil1}Let $X$ be an $n$-stage for $C$ and $G\subset \pi_0 Aut^h (X)$ a subgroup. The isomorphism of $C$-comodules \[\pi^0 H_* (X)\cong \pi^{n+2} H_* (X)\] is $G$-equivariant. 

\end{lemma}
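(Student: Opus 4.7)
Fix $\phi\in G$ represented by a cosimplicial homotopy self-equivalence $f:X\to X$. Applying homology and cohomotopy, $f$ induces an automorphism $\tilde\phi\in Aut(C)$ on $\pi^0 H_*(X)\cong C$ and an automorphism $\hat\phi\in Aut_C(C[n+1])$ on $\pi^{n+2}H_*(X)\cong C[n+1]$. The lemma amounts to the claim that $\hat\phi$ coincides with the canonical shift $\tilde\phi[n+1]$, equivalently that the composite isomorphism $\pi^0 H_*(X)\cong C\to C[n+1]\cong\pi^{n+2}H_*(X)$ (the middle arrow being the degree shift) intertwines the two $G$-actions.

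My strategy is to exhibit the identification $\pi^{n+2}H_*(X)\cong C[n+1]$ as a natural consequence of the $n$-stage structure, so that $G$-equivariance follows immediately from naturality of the construction under $f$. The most direct route uses the spiral exact sequence of Theorem \ref{ses-statement} applied to Eilenberg--MacLane objects $K(\F,m)$. Definition \ref{def:n-stage}(b) gives $\pi_s^\sharp(X,K(\F,m))=0$ for $s>n$, and the vanishing $\pi^sH_*(X)=0$ for $0<s<n+2$ eliminates the intermediate terms appearing after each spiral boundary. Iterating yields a natural isomorphism identifying $\pi^{n+2}H_*(X)$ with the $(n+1)$-fold shift of $\pi^0 H_*(X)\cong C$, and naturality in $X$ delivers the desired equivariance.

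As an alternative I would use the skeletal filtration of $X$ in $c\mathcal{S}^\mathcal{G}$ from \cite{BRS}: $X$ is built by a sequence of homotopy pushouts attaching twisted Eilenberg--MacLane objects $L_C(M,k)$, with the final attachment corepresenting the Andr\'e--Quillen class that records the top cohomotopy. Because $L_C(-,k)$ is functorial in its $C$-comodule argument, any self-equivalence of $X$ acts on this final layer through the functorial image of its action on the bottom layer $L(C,0)$, forcing $\hat\phi=\tilde\phi[n+1]$.

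The hardest part will be the bookkeeping required to match the cumulative shift from the iterated spiral boundary with the shift defining $C[n+1]$, and to verify the vanishing conditions at each step. In the skeletal approach, the corresponding difficulty is a rigidity statement: that a self-equivalence cannot modify the top layer by a nontrivial element of $Aut_\sharp(C[n+1])$ beyond what is already recorded by the induced $\tilde\phi$. Once either point is settled, the lemma is a matter of functoriality.
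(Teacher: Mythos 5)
Your primary route is exactly the paper's: the identification $\pi^{n+2}H_*(X)\cong C[n+1]$ is produced by iterating the spiral exact sequence (using the vanishing from Definition \ref{def:n-stage}), and since that sequence is natural in $X$, the isomorphism is $G$-equivariant; the paper compresses this into a citation of \cite[7.1]{BRS}. No further comment is needed.
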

\begin{proof} The group $G$ acts on the spiral exact sequence 
\ref{ses-statement} of $X$.  The $C$-comodule automorphism  induced  by $g\in G$ on $\pi^{n+2}H_* (X)\cong C[n+1]$ is identified via the spiral exact sequence with the automorphism induced by $g$ on $\pi^{0}(C)\cong C$ \cite[7.1]{BRS}.$\square$
\end{proof}

\begin{lemma}\label{nil2}Let $X$ be in $L(C,0)/c\mathcal{S}$. Suppose $G$ acts nilpotently on $\pi^* H_* (X)$. Then $G$ acts nilpotenty on $Ab_C(\mathcal{G}^n (\pi^* (X)))$.
\end{lemma}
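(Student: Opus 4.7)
The plan is to reduce the statement to a pure functoriality argument carried out in two stages: first pushing the nilpotent action through the $n$-fold iteration of the comonad $\mathcal{G}$, and then through the right adjoint $Ab_C$ provided by Theorem~\ref{adjoint}. At each stage the key point is that the relevant functor preserves a $G$-invariant filtration whose associated graded has trivial $G$-action, which is the structural manifestation of nilpotency in an abelian setting.

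First I would replace the commutator/lower-central-series definition of nilpotency with the equivalent characterization via a finite $G$-invariant filtration $\pi^* H_*(X) = W_0 \supset W_1 \supset \cdots \supset W_r = 0$ in the ambient abelian category of bigraded vector spaces, such that $G$ acts trivially on each quotient $W_i/W_{i+1}$. Naturality of the $\mathcal{G}$-coalgebra structure with respect to automorphisms of $X$ ensures that this filtration is compatible with the comonad structure and that the $G$-action on $\pi^* H_*(X)$ lives in $C/\mathcal{G}_{coA}$.

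Next I would push this filtration through $\mathcal{G}^n$. The comonad $\mathcal{G}$ originates from the cofree coalgebra functor $G$, which is right adjoint to the forgetful functor to graded vector spaces and therefore preserves limits; in particular it preserves kernels of maps between $G$-invariant subquotients and sends the trivial action on a subquotient to the trivial action on its image. Iterating $n$ times yields a $G$-invariant filtration on $\mathcal{G}^n(\pi^* H_*(X))$ whose successive quotients carry the trivial $G$-action. Applying the right adjoint $Ab_C$ (which again preserves limits) transports this to a finite $G$-invariant filtration on $Ab_C(\mathcal{G}^n(\pi^* H_*(X)))$ with $G$-trivial successive quotients. Since $Ab_C$ takes values in the abelian category $C/coGr\mathcal{G}_{coA}$, the commutator definition of nilpotency applies directly, and we conclude that $\Gamma^r_G$ of this object vanishes.

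The main obstacle is that $\mathcal{G}$ is \emph{not} additive: it is assembled from a cofree coalgebra construction rather than a linear operation, so one cannot simply invoke the linear-algebraic identity $(g-1)^r = 0$ and transport it through $\mathcal{G}$. This forces us to encode nilpotency by the filtration itself and to rely on preservation of limits at each step. A secondary delicacy is that the associated graded, when computed in the various categories (bigraded vector spaces, $\mathcal{G}$-coalgebras, $C/coGr\mathcal{G}_{coA}$), must behave compatibly under the functors involved; this is precisely where the right-adjoint property of the cofree coalgebra functor underlying $\mathcal{G}$, together with Theorem~\ref{adjoint} for $Ab_C$, does the real work.
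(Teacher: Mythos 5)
Your proposal diverges from the paper's argument in a way that opens a real gap. The paper's proof is short: it asserts that the \emph{composite} $Ab_C\,\mathcal{G}^n$ is an additive, left exact functor between abelian categories (being a right adjoint), and then quotes \cite[Proposition 4.15]{HMR}, which is exactly the statement that such a functor carries nilpotent $G$-objects to nilpotent $G$-objects via the invariant-filtration characterization you describe. You instead declare that non-additivity of $\mathcal{G}$ is the main obstacle and try to route around it by pushing the filtration $W_0\supset W_1\supset\cdots$ through $\mathcal{G}$ one step at a time, using only that the cofree coalgebra functor is a right adjoint. This is where the argument breaks. The cofree functor $G$ preserves limits \emph{as computed in the category of coalgebras}; the endofunctor $\mathcal{G}$ that you iterate is the composite of $G$ with the forgetful functor back to (bi)graded vector spaces, and that forgetful functor is a \emph{left} adjoint. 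The composite is not left exact on vector spaces: over a field one has $\mathcal{G}(W_0)\cong \mathcal{G}(W_1)\otimes\mathcal{G}(W_0/W_1)$, so $\mathcal{G}(W_1)$ is not the kernel of $\mathcal{G}(W_0)\to\mathcal{G}(W_0/W_1)$, and the quotient $\mathcal{G}(W_0)/\mathcal{G}(W_1)$ contains a summand of the form $\overline{\mathcal{G}}(W_1)\otimes\overline{\mathcal{G}}(W_0/W_1)$ on which $G$ acts through its (generally nontrivial) action on $W_1$. Hence the image filtration does \emph{not} have trivial associated graded, and your inductive step fails already after one application of $\mathcal{G}$.

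There are two ways to close the gap. One is to replace the naive image filtration by the finer convolution filtration $F_k=\sum_{i+j\ge k}\mathrm{im}\bigl(\overline{\mathcal{G}}(W_i)\otimes\overline{\mathcal{G}}(W_j)\bigr)$ coming from the multiplicative structure of $\mathcal{G}(W_0)$ — i.e.\ run the standard ``tensor product of nilpotent representations is nilpotent'' argument — which is a genuine extra step you do not supply and which also requires care with boundedness of the nilpotency order in each bidegree. The other, which is the paper's route, is to observe that the abelianization is not a passive last step but the point of the whole lemma: $Ab_C$ kills precisely the decomposable part responsible for the failure above, the composite $Ab_C\,\mathcal{G}^n$ preserves finite products (all functors in sight do, and products in the abelian target are biproducts), hence is additive and left exact, and then the filtration argument is exactly \cite[Proposition 4.15]{HMR}. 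As written, your proof misidentifies the role of $Ab_C$ and relies on a left-exactness statement for $\mathcal{G}$ that is false.
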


\begin{proof} The composition $Ab_C \mathcal{G}^n$ is an additive functor between abelian categories. Moreover, it is a right adjoint hence left exact. The assertion follows from \cite[Proposition 4.15]{HMR}.$\square$
\end{proof}

\begin{lemma}\label{nil3}Let $\mathcal{A} $ be an abelian category and $M,N\in U$. Suppose that the group $G$ acts nilpotently on $M$ and $N$.
Then the  $G$-action given by conjugation   on $Hom_U (M,N)$ is nilpotent. Moreover,  \[nil_G (Hom_{\mathcal{A}} (M,N))
\leq 2s\]  with $s=$ max$\{nil_G (M), nil_G (N)\}$.
\end{lemma}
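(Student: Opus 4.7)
The plan is to reformulate the nilpotence hypotheses in terms of the augmentation ideal $I = \ker(\epsilon \colon \mathbb{Z}[G] \to \mathbb{Z})$ and to exploit the cocommutative coproduct $\Delta(g) = g \otimes g$ of the group ring. For any $G$-action on an object $A$ of an abelian category one has $\Gamma_G^n(A) = I^n A$, so the hypotheses read $I^s M = 0$ and $I^s N = 0$ for a common $s = \max\{\mathrm{nil}_G(M), \mathrm{nil}_G(N)\}$, while the goal becomes $I^{2s} \cdot \mathrm{Hom}_\mathcal{A}(M, N) = 0$.

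Writing $g_M, g_N$ for the automorphisms of $M, N$ induced by $g$, the conjugation action on $\mathrm{Hom}_\mathcal{A}(M,N)$ is $g \cdot f = g_N \circ f \circ g_M^{-1}$. I would first observe that this extends to a $(G \times G)$-action via $(g, h) \cdot f = g_N \circ f \circ h_M^{-1}$, and that the original $G$-action is its pullback along the diagonal ring homomorphism $\Delta \colon \mathbb{Z}[G] \to \mathbb{Z}[G] \otimes \mathbb{Z}[G]$. The augmentation ideal of $\mathbb{Z}[G \times G]$ is $J = I \otimes \mathbb{Z}[G] + \mathbb{Z}[G] \otimes I$, and $\Delta(I) \subseteq J$; since $\Delta$ is a ring map, it follows that $\Delta(I^{2s}) \subseteq J^{2s}$.

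It then suffices to show that $J^{2s}$ annihilates $\mathrm{Hom}_\mathcal{A}(M, N)$. The expansion
\[
J^{2s} = \sum_{a + b = 2s} I^a \otimes I^b
\]
contains only summands with $a \geq s$ or $b \geq s$. An element $u \otimes v$ in such a summand acts on $f$ by $f \mapsto u_N \circ f \circ (\sigma v)_M$, where $\sigma$ is the antipode $g \mapsto g^{-1}$; since $\sigma(I) = I$, the right-hand factor still lies in $I^b$ acting on $M$. When $a \geq s$ the left factor kills $N$, and when $b \geq s$ the right factor kills $M$, so each composition vanishes. This yields $\mathrm{nil}_G(\mathrm{Hom}_\mathcal{A}(M, N)) \leq 2s$. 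I do not expect a real obstacle here; the only bookkeeping subtlety is the antipode on the $M$-side, and since $\sigma$ preserves $I$, this does not affect the counting of factors.
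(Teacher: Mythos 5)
Your proof is correct, and it reaches the bound $2s$ by what is at bottom the same mechanism as the paper, but packaged quite differently. The paper argues elementwise: it splits the commutator as $[g,f](m)=[g^{-1},fg(m)]+f[g,m]$, so that each iteration either pushes the target into $\Gamma^1 N$ or forces the map to factor through $\Gamma^1 M$, and then iterates $2s$ times, concluding by the pigeonhole $i+j=2s$. Your version replaces that iteration by a one-line ring computation: the identity $\Gamma_G^n(A)=I^nA$, the inclusion $\Delta(I)\subseteq I\otimes\mathbb{Z}[G]+\mathbb{Z}[G]\otimes I$, and the expansion $J^{2s}=\sum_{a+b=2s}I^a\otimes I^b$ together encode exactly the paper's two-term splitting, with the pigeonhole now applied to the exponents $a,b$. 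What your route buys is that the induction disappears entirely (the paper's ``the assertion follows by iteration of this argument'' is the least precise step of its proof, and your $J^{2s}$ expansion makes that bookkeeping rigorous), and your observation that the antipode preserves $I$ cleanly handles the inverse on the source side, which the paper glosses over. The only points worth being careful about, neither of which is a real gap: for objects $M,N$ of a general abelian category the condition $I^sM=0$ should be read as ``every $s$-fold product $(g_1-1)\cdots(g_s-1)$ acts as the zero endomorphism,'' since subobjects generated by families of images need not exist in general; and one should note that $\mathrm{Hom}_{\mathcal{A}}(M,N)$ is an abelian group with composition bilinear, so that the $\mathbb{Z}[G\times G]$-module structure you use is actually defined. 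Both are exactly as implicit in the paper's own argument.
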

\begin{proof} Let $m\in M$, $g\in G$ and $f\in Hom_{\mathcal{A}} (M,N)$.
Write \[[g,f](m)= \]\[g^{-1}fg(m)-f(m) =\] \[(g^{-1}fg(m) -fg(m)) + (fg(m) - f(m))=\]
\[[g^{-1},fg(m)] + f[g,m]
=\]
\[f_1 (m) + f_2(m)\] where $f_1 (m)$, $f_2 (m)$ are the evaluations at $m$ of an element in $Hom_{\mathcal{A}} (M,\Gamma^1 N)$, $Hom(_U (\Gamma^1 M, N)$ respectively.  The assertion follows by iteration of this argument
 since the summands which show up in an $2s$-fold commutator are in  $Hom_{\mathcal{A}} (\Gamma^i M,\Gamma^j  N)$ with $i+j=2s$.$\square$  
\end{proof}




\begin{theorem}\label{Ha} Let $X$ be an $n$-stage for $C$ and $M\in\mathcal{V}C$. If  $G\subset \pi_0 Aut^h (X))$ acts nilpotently on $C$ and $M$ then it acts nilpotently on each $AQ^m_C(X,M)$.
\end{theorem}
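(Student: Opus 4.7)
The plan is to feed the coalgebraic Miller spectral sequence of Theorem \ref{SS} with $B=X$, analyze the $G$--action on its $E_2$--page using the three nilpotency lemmas, and then transfer nilpotency to the abutment by the standard extension property for nilpotent actions on filtered objects.

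First I would verify that $G$ acts naturally on the entire spectral sequence. The subgroup $G\subset \pi_0 Aut^h(X)$ acts on $X$ in the homotopy category of $c\mathcal{S}^{\mathcal{G}}$ and hence on $\pi^* H_*(X)$ as a $\mathcal{G}$--coalgebra, on the comonadic resolution $\mathcal{G}^{p+1}(\pi^* H_*(X))$, on the $\operatorname{Hom}$--groups of Theorem \ref{SS}, on $L^p_{\mathcal{G}}$, on every $E_r$--page, and on the filtration of the target $AQ^{p+q}_C(X,M)$.

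Second I would bound the nilpotency class on $E_2$. Because $X$ is an $n$--stage, $\pi^* H_*(X)$ is concentrated in cosimplicial degrees $0$ and $n+2$, taking values $C$ and $C[n+1]$. By Lemma \ref{nil1} the comparison isomorphism between these two pieces is $G$--equivariant, so the $G$--action on $\pi^* H_*(X)$ is governed by the action on $C$, which is nilpotent by hypothesis. Lemma \ref{nil2} then gives nilpotency of the $G$--action on each $Ab_C(\mathcal{G}^p(\pi^* H_*(X)))$, since $Ab_C\mathcal{G}^p$ is additive and left exact. Combined with nilpotency of the $G$--action on $M$, Lemma \ref{nil3} yields nilpotency of the conjugation action on
\[
\operatorname{Hom}_{C/coGr\mathcal{G}_{coA}}\bigl(i(M[q]),Ab_C(\mathcal{G}^{p+1}(\pi^* H_*(X)))\bigr).
\]
Passing to the cosimplicial cohomology that computes $L^p_{\mathcal{G}}$ preserves nilpotency, since subquotients of $G$--nilpotent modules in an abelian category are $G$--nilpotent, with nilpotence class at most that of the ambient object. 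Therefore each $E_2^{p,q}$ is $G$--nilpotent, and the same holds a fortiori for every $E_r^{p,q}$ and for $E_\infty^{p,q}$.

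Third I would descend from $E_\infty$ to the abutment. The spectral sequence of Theorem \ref{SS} is first--quadrant, so for each fixed total degree $m$ the associated decreasing filtration on $AQ^m_C(X,M)$ has length at most $m+1$ with successive quotients $E_\infty^{p,m-p}$ for $0\le p\le m$. Using the elementary fact that a short exact sequence of $G$--modules with $G$--nilpotent outer terms (of classes $a$ and $c$) has $G$--nilpotent middle term (of class $\le a+c$), a finite induction on $p$ yields nilpotency of the $G$--action on $AQ^m_C(X,M)$, which is the claim.

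The main obstacle, as I see it, is the second step: ensuring that the derived functors $L^p_{\mathcal{G}}$ and the resolution by $\mathcal{G}^{p+1}$ are sufficiently functorial in $X$ that $G$ acts on the entire $E_r$--machinery, and then being disciplined enough to check that each passage (additive functor, cohomology of a cosimplicial abelian group, page--to--page differential, extension across filtration quotients) only enlarges nilpotence classes by bounded amounts. Everything else is bookkeeping once Lemmas \ref{nil1}--\ref{nil3} are in place.
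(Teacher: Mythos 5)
Your proposal is correct and follows essentially the same route as the paper: it applies the coalgebraic Miller spectral sequence of Theorem \ref{SS} to $B=X$, deduces nilpotency of the $G$-action on the $E_1$/$E_2$-pages from Lemmas \ref{nil1}, \ref{nil2} and \ref{nil3}, and then passes to the abutment by strong convergence and a finite filtration argument (the paper invokes \cite[Proposition 4.15]{HMR} for the subquotient and extension steps you spell out). Your write-up is simply a more detailed version of the paper's own proof.
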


\begin{proof} We consider the spectral sequence  in \ref{SS}. 
The group $G$ acts on the spectral sequence and it acts nilpotently on the $E_1$ term by \ref{nil2} and \ref{nil3}. By strong convergence  it acts nilpotently on the $E_{\infty}$ term where we use again \cite[Proposition 4.15]{HMR}. A filtration argument shows that $G$ nilpotently on  $AQ^m_C(X,M)$.
\end{proof}

Let $X$ be a marked $n$-stage for $C$, $Y$ the induced marked $(n-1)$-stage and   $G\subset \pi_0 Aut^h_{\sharp}(X)$ a subgroup.
The induced  homomorphism
\begin{equation}\label{99}\pi_* (Aut^h_{\sharp}(X))\to \pi_* (Aut^h_{\sharp}(Y))
\end{equation} commutes with the $G$-action via conjugation on source and target.\\

Let $X$ be a marked $n$-stage for $C$, $Y$ the induced marked $(n-1)$-stage and   $G\subset \pi_0 Aut^h_{\sharp}(X)$ a subgroup.
 The group $G$ acts on $\pi_* (Aut^h_{\sharp}(X))$ and $\pi_* (Aut^h_{\sharp}(Y))$ via conjugation 
The induced  homomorphism
\begin{equation}\label{99}\pi_* (Aut^h_{\sharp}(X))\to \pi_* (Aut^h_{\sharp}(Y))
\end{equation} commutes with the $G$-action via conjugation on source and target.
 On $AQ^{*}_C (C,C[n])$ the induced $G$-action is

 On $AQ^{*}_C (C,C[n])$ the induced $G$-action is given by composition with the homomorphism induced by $g\in G$ and precomposition with the self map of $K_C (M,n)$ induced by $g^{-1}$ on $C[n+1]$. However the later is the identity by \ref{nil1}.
\begin{proposition}\label{lo}  The  long exact sequence in \ref{long} is one of $G$-equivariant homomorphisms.
\end{proposition}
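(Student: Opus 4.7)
The strategy is to realise the long exact sequence \ref{long} as the homotopy long exact sequence of a fibration that already carries a compatible $G$-action on all three terms. Once such a lift exists, naturality of the homotopy long exact sequence immediately yields $G$-equivariance of every arrow, and it only remains to check that the $G$-action induced on the fibre agrees with the explicit action on Andr\'e--Quillen cohomology spelled out just before the proposition.

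First, I would put the conjugation $G$-action on the pair $(Aut^h_{\sharp}(X), Aut^h_{\sharp}(Y))$. A representative of $g\in G\subset \pi_0 Aut^h_{\sharp}(X)$ acts on $Aut^h_{\sharp}(X)$ by inner conjugation; since the restriction map $Aut^h_{\sharp}(X)\to Aut^h_{\sharp}(Y)$ is induced by the derived $(n-1)$-skeleton functor applied to marked $n$-stages, it sends this conjugation to conjugation by the image of $g$ in $\pi_0 Aut^h_{\sharp}(Y)$. Thus the right-hand arrow of \ref{45} is tautologically $G$-equivariant.

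Second, I would identify the induced action on the fibre $\mathcal{A}^n(C,C[n])$. The fibration \ref{45} is obtained by looping \ref{43}, which in turn arises as the homotopy pullback of the lower-right square of \ref{15} along the basepoint $X\in\mathcal{M}'_n(C)_Y$ and the zero Andr\'e--Quillen class. Because this pullback is constructed by the classifying map $X\mapsto K_C(C[n],n)\to cX$ to $\tilde{\mathcal{AQ}}^{n+2}_C(C,C[n])$, the induced action of $g$ on the fibre is by precomposition with the self-equivalence of $K_C(C[n],n)$ coming from $g^{-1}$ acting on $C[n+1]=\pi^{n+2}H_*(X)$ and post-composition with $g$ acting on $cX$. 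Since $g\in G_{\sharp}$ is the identity on $C=\pi^0 H_*(X)$, Lemma \ref{nil1} forces $g$ to act trivially on $C[n+1]$, so the self-map of $K_C(C[n],n)$ reduces to the identity. What remains is exactly the post-composition action described just before the proposition, which on $\pi_*$ recovers the $G$-action on $AQ^*_C(C,C[n])$ via the identification \ref{AQHG}.

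Third, assembling the three compatible $G$-actions into a $G$-equivariant fibration and passing to the homotopy long exact sequence gives the result. The main obstacle is the verification in the second step: one must carefully unwind how the pullback \ref{15} and its looping produce the fibre $\mathcal{A}^n(C,C[n])$, so that the geometric conjugation action on the fibre can be matched with the algebraically defined action on Andr\'e--Quillen cohomology. This matching relies crucially on the triviality of the $g$-action on $C[n+1]$ supplied by \ref{nil1}, which is what collapses the a priori two-sided action on $\mathcal{AQ}^{n+1}_C(C,C[n])$ to the one-sided post-composition action and, in turn, guarantees that the connecting homomorphism in \ref{long} commutes with the $G$-actions.
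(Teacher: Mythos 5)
Your proof is correct and follows essentially the same route as the paper: the paper's argument likewise rests on the conjugation action making the restriction map $Aut^h_{\sharp}(X)\to Aut^h_{\sharp}(Y)$ tautologically $G$-equivariant with the identity as a fixed basepoint, and on identifying the induced action on the fibre as post-composition only, because the precomposition by the self-map of the twisted Eilenberg--MacLane object induced by $g^{-1}$ on $C[n+1]$ is trivial by Lemma \ref{nil1}. Your write-up merely makes explicit the unwinding of the pullback squares that the paper leaves implicit.
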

\begin{proof} The homomorphism $Aut^h_{\sharp}(X))\to   Aut^h_{\sharp}(Y)$ is $G$-equivariant and the base point i.e. the identity is fixed by $G$.$\square$
\end{proof}
\section{Proof of Theorem \ref{main}}
We begin with the
  \begin{proof}of   \ref{maincor}.
Let $X$ and $G$ be as in \ref{main}.
\[H = \pi_0 (Aut^h_{\sharp}(X))\cap G .\]
Since $G$ acts nilpotently on $\pi^0(X)\cong C$ its image $\tilde{G}$ under\\
 $ \pi_0 Aut^h(X)\to Aut(C))$
  is  nilpotent by \ref{Hall}.\\
Consider the exact sequence of $G$-groups where $G$ acts  by conjugation
\begin{equation}1\to H \to G\to  \tilde{G}\to 1 \end{equation}

It suffices to show that $G$ acts nilpotently on $\pi_0 (Aut^h _{\sharp}(X))$.  Since if this is known, then $H\subset \pi_0 (Aut^h _{\sharp}(X))$ being a $G$-subgroub is $G$-nilpotent as well. Then also $G$, as an extension of nilpotent $G$-groups, is nilpotent.\\

Let $i \leq n$ and  $X(i)=sk_i (X)$ be the $(i-1)$-stage for $C$ the image of $X$ under the derived  $i$-skeleton functor.\\ 
Consider the tower of fibrations
\begin{equation}\label{tower} Aut^h _{\sharp}(X(n)))\to Aut^h _{\sharp}(X(n-1))\to \ldots Aut^h _{\sharp}(X(0))=1
\end{equation}
where the last equality holds by \ref{0stage}.
We show by induction that $G$ acts nilpotently on $\pi_0 (Aut^h _{\sharp}(X(i)))$. There is nothing to prove for $i=0$ and 
an application of \ref{Ha} and \ref{lo} closes the induction.

That $G$ is $\mathbb{F}$-complete is seen from consideration
of the exact sequence of nilpotent groups 
\begin{equation}1\to G_{\sharp}\to G \to \tilde{G}\to 1 \end{equation}
The group $\tilde{G}$ is $\mathbb{F}$-complete by assumption
and $ G_{\sharp}$ is the fundamental group of a component of the space $\mathcal{M}^{'}_{n} (C)$ which is  $\mathbb{F}$-complete by  \ref{comM}. 
 An application of  \cite[Corllary 2.3. b)]{Mo} shows that $G$ is $\mathbb{F}$-complete as well.$\square$\\
 \end{proof}
 
 \begin{proof}of   \ref{main}

We show that the homotopy groups $BAut_G (X)$ degree $\geq 2$  are nilpotent $G$-modules. Since $Aut_G^h (X)$ is group like it is enough to consider the component of the identity $Aut_1^h (X)$. We do an induction on $n$. For $n=0$ the space $Aut_1^h (X)$ is contractible. Consider the $s$-fold looped tower of fibration sequences \ref{tower}
 The homotopy fiber at level $k$ is   
 \begin{equation}\label{klooptower}\Omega^s \mathcal{A}^{n}(C,C[n-k]) \simeq \mathcal{A}^{n-s}(C,C[n-k])
 \end{equation}
 Again we conclude from \ref{Ha} and \ref{lo}.
 
The space  $Aut_1^h (X)$ is a component of the space $\Omega BAut^h_{\sharp}(X)$ which in turn is the loop space of a component of $\mathcal{M}^{'}_{n} (C)$ which is 
  $\mathbb{F}$-complete by  \ref{comM} below. Hence, the higher  homotopy groups  are $\mathbb{F}$-complete. 
 Now we conclude from \cite[VI.5.4.]{BK}$\square$
 
 \end{proof}

\begin{lemma}\label{framedcomplete}The space $\mathcal{M}^{f}_n (C)$ is $\mathbb{F}$-complete.
\end{lemma}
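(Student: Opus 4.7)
The plan is to induct on $n$, using the homotopy pullback square \ref{framed} to realize $\mathcal{M}^f_n(C)$ as the total space of a principal fibration whose fiber and base are both $\mathbb{F}$-complete, and then to invoke the Bousfield--Kan completeness criterion \cite[VI.5.4]{BK}. For the base case $n=0$, equation \ref{framed0} identifies $\mathcal{M}^f_0(C)$ with the discrete group $Aut_C(C[1])$, which is trivially $\mathbb{F}$-complete.

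For the inductive step, I will assume $\mathcal{M}^f_{n-1}(C)$ is $\mathbb{F}$-complete and rotate the fiber sequence coming from \ref{framed} by one step. This yields a principal fibration
\[\mathcal{AQ}^{n+1}_C(C,C[n]) \;\simeq\; \Omega\,\mathcal{AQ}^{n+2}_C(C,C[n]) \longrightarrow \mathcal{M}^f_n(C) \longrightarrow \mathcal{M}^f_{n-1}(C).\]
The homotopy groups recorded in \ref{AQHG} show the fiber has $\mathbb{F}$-vector space homotopy concentrated in finitely many degrees, so it is a generalized $\mathbb{F}$-Eilenberg--MacLane space; in particular it is both $\mathbb{F}$-nilpotent and $\mathbb{F}$-complete. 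Because the fibration is principal, pulled back along the classifying map into the GEM $\mathcal{AQ}^{n+2}_C(C,C[n])$ from the path-loop fibration, the monodromy action of $\pi_1(\mathcal{M}^f_{n-1}(C))$ on the homotopy of the fiber factors through $\pi_1$ of that GEM and is therefore trivial. With all the hypotheses of \cite[VI.5.4]{BK} in place, the total space $\mathcal{M}^f_n(C)$ is $\mathbb{F}$-complete, closing the induction.

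The main obstacle is checking the nilpotent action hypothesis in the fibration lemma. It reduces to the observation that the classifying map lands in a GEM, on which $\pi_1$ acts trivially on the higher homotopy groups; combined with the principal structure of the fibration, this suffices. A secondary point is that the pullback square only sees the components of $\mathcal{M}^f_{n-1}(C)$ mapping to the zero component of the Andr\'e--Quillen space, but since $\mathbb{F}$-completeness is detected componentwise and is inherited by open-and-closed subspaces, the inductive hypothesis applies without modification.
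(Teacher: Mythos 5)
Your induction and your use of the square \ref{framed} match the paper's strategy, but your inductive step runs through the Bousfield--Kan fibre lemma, and the nilpotency hypothesis you need there is not actually verified by your argument --- and can genuinely fail. The fiber $\Omega\,\mathcal{AQ}^{n+2}_C(C,C[n])\simeq\mathcal{AQ}^{n+1}_C(C,C[n])$ is in general \emph{disconnected}, with $\pi_0$ equal to $AQ^{n+1}_C(C;C[n])$, and for a principal fibration induced from the path--loop fibration the monodromy of $\pi_1$ of the base acts on $\pi_0$ of the fiber by \emph{translation} through the image of $\pi_1(\mathcal{M}^f_{n-1}(C))\to\pi_1(\mathcal{AQ}^{n+2}_C(C,C[n]))\cong AQ^{n+1}_C(C;C[n])$. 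So your claim that the action on ``the homotopy of the fiber'' is trivial is false in degree $0$; the GEM argument only kills the action on the higher homotopy groups of a fixed component. The fibre lemma requires nilpotency of the $\pi_1$-action on $H_i(F;\mathbb{F})$ for all $i\ge 0$ including $i=0$, where $H_0(F;\mathbb{F})=\mathbb{F}[\pi_0 F]$ is a permutation module under these translations; when the image subgroup is an infinite elementary abelian $p$-group the augmentation ideal of $\mathbb{F}[V]$ is not nilpotent, so the hypothesis fails. Since the Andr\'e--Quillen groups here are arbitrary $\mathbb{F}$-vector spaces, you cannot assume finiteness. (A smaller point: the fibre lemma is \cite[II.5.1]{BK}, not VI.5.4.)

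The paper avoids all of this by not rotating the sequence at all: $\mathcal{M}^f_n(C)$ is exhibited by \ref{framed} as a homotopy pullback, hence a homotopy limit, of the three $\mathbb{F}$-complete spaces $\ast$, $\mathcal{M}^f_{n-1}(C)$ (inductive hypothesis) and $\mathcal{AQ}^{n+2}_C(C,C[n])$ (whose homotopy groups are $\mathbb{F}$-vector spaces), and by \cite[12.9]{Bou2} the class of $H\mathbb{F}_*$-local spaces is closed under homotopy limits. No connectivity or nilpotency of any action enters. If you want to salvage your route you would have to either restrict to situations where the relevant Andr\'e--Quillen group is finite, or replace the fibre lemma by exactly this closure-under-homotopy-limits statement --- at which point the rotation buys you nothing. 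I recommend rewriting the inductive step along the paper's lines.
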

\begin{proof} We do an induction on $n$. The case $n=0$ follows from 
the fact that $\mathcal{M}^{f}_0 (C)\simeq Aut_C ([1])$ is discrete
together with the commutation of $\mathbb{F}$-completion with disjoint union \cite[7.7.1.]{BK}. For the inductive step we use \ref{framed}.
 The homotopy groups of the loop space $\mathcal{AQ}_C^{n+2}(C,C[n])$ are 
 $\mathbb{F}$-vector spaces which implies its $\mathbb{F}$- completeness. By  \cite[12.9.]{Bou2}
  a homotopy limit of $\mathbb{F}$-complete spaces  is  $\mathbb{F}$-complete.  The claim follows.$\square$ 
\end{proof}

\begin{lemma}\label{product} There is an equivalence 
\[\mathcal{M}^{f}_n (C)\simeq \mathcal{M}^{'}_n (C)\times Aut_C (C[n+1]).\]
\end{lemma}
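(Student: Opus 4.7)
The plan is to recognise that by the very definition \ref{frameddef}, the space $\mathcal{M}^f_n(C)$ sits in a homotopy fibre sequence
\[
\mathcal{M}^f_n(C) \longrightarrow \mathcal{M}'_n(C) \xrightarrow{\;\;\varphi\;\;} B\,Aut_C(C[n+1]),
\]
where $\varphi$ is the restriction to $\mathcal{M}'_n(C)$ of the composite of the forgetful map $\mathcal{M}'_n(C) \to \mathcal{M}_n(C)$ with the classifying map for the $Aut_C(C[n+1])$-torsor $\pi^{n+2}H_*(-)$ on $\mathcal{M}_n(C)$. The whole lemma will follow once we produce a canonical null-homotopy of $\varphi$ and show that it is natural enough to split the fibration; the fibre will then become $\mathcal{M}'_n(C) \times \Omega B Aut_C(C[n+1]) \simeq \mathcal{M}'_n(C) \times Aut_C(C[n+1])$.

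To exhibit the null-homotopy, I would go back to the defining groupoid of $\mathcal{M}'_n(C)$, whose objects are marked $n$-stages $(X,\sigma)$ with $\sigma: L(C,0) \to X$ inducing an iso on $\pi^0$. A marking $\sigma$ produces not only the isomorphism $C \cong \pi^0 H_*(X)$, but, via the spiral exact sequence of Theorem \ref{ses-statement}, a preferred isomorphism of $C$-comodules $C[n+1] \cong \pi^{n+2} H_*(X)$, exactly as invoked in the proof of Lemma \ref{nil1}. Since the morphisms of the marked moduli groupoid are required to commute with the markings, they also commute with these induced isomorphisms on $\pi^{n+2}$. Consequently the $Aut_C(C[n+1])$-torsor $\pi^{n+2}H_*(-)$ is trivialised on $\mathcal{M}'_n(C)$ by a canonical section, and this section is the desired null-homotopy of $\varphi$.

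Given the null-homotopy, a standard argument identifies the homotopy fibre of $\varphi$ with the product $\mathcal{M}'_n(C) \times \Omega B Aut_C(C[n+1])$. Since $Aut_C(C[n+1])$ is a discrete group, $\Omega B Aut_C(C[n+1]) \simeq Aut_C(C[n+1])$, giving the claimed equivalence. The base case $n=0$ is consistent: $\mathcal{M}'_0(C) \simeq \ast$ by \ref{0stage} and $\mathcal{M}^f_0(C) \simeq Aut_C(C[1])$ by \ref{framed0}.

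The main obstacle is verifying that the natural trivialisation on objects really does assemble into a map of simplicial sets (equivalently, a functor into the classifying groupoid of $Aut_C(C[n+1])$ which factors through a point), rather than only a pointwise splitting. This amounts to checking that the connecting isomorphism $C[n+1] \cong \pi^{n+2}H_*(X)$ produced by the spiral exact sequence is natural in morphisms of marked $n$-stages; this in turn follows from the naturality of the spiral sequence and the compatibility of the $\mathcal{H}_{un}$-algebra structure with maps of cosimplicial spaces, as recorded in the remark after Theorem \ref{ses-statement}.
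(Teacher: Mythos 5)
Your proposal is correct and follows essentially the same route as the paper: both identify $\mathcal{M}^f_n(C)$ as the homotopy fibre of the composite $\mathcal{M}'_n(C)\to\mathcal{M}_n(C)\to BAut_C(C[n+1])$ and show this composite is null-homotopic because marked automorphisms act trivially on $\pi^{n+2}H_*(X)\cong C[n+1]$, which is exactly the content of Lemma \ref{nil1} via the spiral exact sequence. The paper disposes of the coherence issue you flag at the end more cheaply: since $Aut_C(C[n+1])$ is discrete, the restriction of the map to each connected component $BAut^h_{\sharp}(X)$ factors through $\pi_0(Aut^h_{\sharp}(X))\to Aut_C(C[n+1])$, so triviality need only be checked on $\pi_0$ and no further naturality of the spiral-sequence identification is required.
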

\begin{proof} We claim that the morphism
\[\mathcal{M}'_n(C) \longrightarrow \mathcal{M}_n(C) \to B  Aut_C(C[n+1])\] is homotopic to a constant map on each connected component 
$B Aut^h_{\sharp} (X)$  given by a marked $n$-stage $X$.
 Since
 $Aut_C (C[n+1])$ is discrete  the induced map factors as 
\[Aut^h_{\sharp} (X)\to \pi_0 (Aut^h_{\sharp} (X))\to Aut_C(C[n+1])\]
and the later homomorphism  is  constant by \ref{nil1}.
$\square$
\end{proof}

\begin{lemma}\label{comM} The space $\mathcal{M}^{'}_{n} (C)$ is $\mathbb{F}$-complete.\end{lemma}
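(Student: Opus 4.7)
The plan is to derive this directly from the two preceding lemmas. By Lemma \ref{product} we have an equivalence
\[
\mathcal{M}^{f}_n(C) \simeq \mathcal{M}^{'}_n(C) \times Aut_C(C[n+1]),
\]
and by Lemma \ref{framedcomplete} the left-hand side is $\mathbb{F}$-complete. So the task reduces to extracting $\mathbb{F}$-completeness of the first factor from $\mathbb{F}$-completeness of the product with a discrete space.

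The key observation I would use is that $Aut_C(C[n+1])$ is discrete. Therefore the product $\mathcal{M}^{'}_n(C) \times Aut_C(C[n+1])$ is literally a disjoint union
\[
\mathcal{M}^{f}_n(C) \;\simeq\; \bigsqcup_{\phi \in Aut_C(C[n+1])} \mathcal{M}^{'}_n(C),
\]
and, in particular, $\mathcal{M}^{'}_n(C)$ is equivalent to the summand indexed by the identity automorphism, i.e.\ to a union of connected components of $\mathcal{M}^{f}_n(C)$.

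Now I would invoke the compatibility of $\mathbb{F}$-completion with disjoint unions \cite[7.7.1.]{BK}: if $\sqcup_\alpha Z_\alpha$ is $\mathbb{F}$-complete, then each summand $Z_\alpha$ is $\mathbb{F}$-complete. Applied to $\mathcal{M}^{f}_n(C)$, which is $\mathbb{F}$-complete by Lemma \ref{framedcomplete}, this immediately yields that the summand $\mathcal{M}^{'}_n(C)$ is $\mathbb{F}$-complete.

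There is no real obstacle beyond bookkeeping, since the hard work is already encapsulated in the inductive proof of Lemma \ref{framedcomplete} and in the splitting Lemma \ref{product}; the one point that must be checked is merely that $Aut_C(C[n+1])$ is genuinely a discrete set (which follows from the definition in Section~\ref{Mod}), so that the product is literally a coproduct of copies of $\mathcal{M}^{'}_n(C)$ and the completion result for disjoint unions applies componentwise.
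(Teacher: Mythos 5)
Your proposal is correct and follows essentially the same route as the paper: the paper's own proof likewise combines Lemma \ref{product} with Lemma \ref{framedcomplete} to exhibit $\mathcal{M}^{'}_{n}(C)$ as a union of path components of the $\mathbb{F}$-complete space $\mathcal{M}^{f}_{n}(C)$, using the discreteness of $Aut_C(C[n+1])$ and the compatibility of $\mathbb{F}$-completion with disjoint unions. Your write-up merely makes the bookkeeping more explicit.
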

\begin{proof} By \ref{product} and \ref{framedcomplete}  $\mathcal{M}^{'}_{n} (C)$ is a subset of pathcomponents of the $\mathbb{F}$-complete space $\mathcal{M}^{f}_{n} (C)$.$\square$ 
\end{proof}

\section{Appendix}

The aim of this section is to give a proof of \ref{adjoint}.
In view of applications in later work we prove a general $\infty$-categorical version.\\
Under the Dold-Kan equivalence the model category $cnVec$ corresponds to the standard model category on cochain complexes of graded vector spaces $coch(nVec)$ in which the cofibrations are the maps which are monomorphisms in positive degrees and the fibrations are the surjections. All objects of $cnVec$ are cofibrant and fibrant. The inclusion functor of $nVec$ and the functor $\pi^* (-)$ are inverse equivalences between the homotopy category $Ho(cnVec)$ which identifies with  $nVec$. So the functor $\mathcal{G}$ is the functor induced by $G$ on $Ho(cnVec)$. An cochain complex  is called perfect if it is isomorphic in $Ho(cnVec)$ to a complex whose total dimension is finite.   It is well known that the compact objects in  $Ho(cnVec)$ are the perfect complexes.

The following result is a variant of the fundamental theorem of coalgebras for unstable coalgebras. Undoubtedly it is well known but we could not spot it in the literature.

\begin{theorem}\label{FT} Let $C\in \mathcal{CA}$ and $M\subset C$ a finite dimensional
sub vector space contained in degrees less than or equal to $n$. There is a finite dimensional unstable subcoalgebra $D\subset C$ with $M\subset D$.
\end{theorem}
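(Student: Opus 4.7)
The plan is to produce $D$ in two stages. First I would apply the classical (Sweedler) fundamental theorem of coalgebras to the finite-dimensional subspace $M$ viewed inside the underlying graded coalgebra of $C$, forgetting the $\mathcal{A}$-structure; this gives a finite-dimensional graded subcoalgebra $D_0 \subset C$ with $M \subset D_0$. Because the coproduct sends $C_k$ into $\bigoplus_{i+j=k} C_i \otimes C_j$, intersecting $D_0$ with $\bigoplus_{k\le n} C_k$ yields again a subcoalgebra, so one may arrange that $D_0$ is concentrated in degrees $\le n$.

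In the second stage I would set $D := D_0 \cdot \mathcal{A}$, the right $\mathcal{A}$-submodule of $C$ generated by $D_0$; by construction $D$ is $\mathcal{A}$-stable. For finite-dimensionality, the critical point is that the $\mathcal{A}$-action on an unstable coalgebra is \emph{degree-decreasing}, so that $D$ remains inside $\bigoplus_{k\le n} C_k$; for each such $k$, the component $D \cap C_k$ is spanned by products $x \cdot \theta$ with $x \in D_0$ homogeneous of degree $d$ and $\theta \in \mathcal{A}^{d-k}$, and the degreewise finiteness of the Steenrod algebra combined with the finite-dimensionality of $D_0$ makes this component finite-dimensional; summing over the finitely many $k \le n$ shows $D$ is finite-dimensional. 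That $D$ is still a subcoalgebra is immediate from the Cartan formula: for $x \in D_0$ with $\Delta x = \sum x' \otimes x''$ (where $x',x'' \in D_0$ because $D_0$ is a subcoalgebra) and $\theta \in \mathcal{A}$ with coproduct $\psi(\theta) = \sum \theta' \otimes \theta''$, one has $\Delta(x \cdot \theta) = \sum (x' \cdot \theta') \otimes (x'' \cdot \theta'')$, which lies in $D \otimes D$.

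I do not foresee a genuine obstacle here; the one conceptual point is to notice that the unstable coalgebra convention puts the Steenrod action in the degree-decreasing direction (in contrast to the cohomological case, where the analogous naive closure under $\mathcal{A}$ would blow up to infinite dimension), and this, combined with the degreewise finiteness of $\mathcal{A}$, is exactly what keeps the $\mathcal{A}$-closure of $D_0$ finite-dimensional. The strong instability condition is not essentially used in this argument.
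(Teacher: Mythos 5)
Your proof is correct, but it takes a genuinely shorter route than the paper. The paper also starts from the classical fundamental theorem of coalgebras to get a finite graded subcoalgebra $C(1)\supset M$ concentrated in degrees $\le n$, and then passes to the unstable $\mathcal{A}$-submodule $M(2)$ it generates; but instead of observing, as you do, that this $\mathcal{A}$-closure is \emph{already} a subcoalgebra, the paper treats $M(2)$ as merely a finite submodule, applies the fundamental theorem again, closes under $\mathcal{A}$ again, and so on. Termination is guaranteed because each module-closure step only produces new elements in strictly lower degree (the right Steenrod action lowers degree), so after at most $n$ rounds the alternation stabilizes on a finite-dimensional unstable subcoalgebra. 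Your single-step argument replaces this induction by one explicit use of the Cartan compatibility $\Delta(x\cdot\theta)=\sum \pm\, (x'\cdot\theta')\otimes(x''\cdot\theta'')$, i.e.\ the $\mathcal{A}$-linearity of the comultiplication with respect to the coproduct $\psi$ of $\mathcal{A}$; since this is part of the definition of an object of $\mathcal{CA}$, the shortcut is legitimate, and your finiteness count (degree-decreasing action, $\mathcal{A}$ finite in each degree, $D_0$ finite and bounded above by $n$) is exactly the same mechanism that makes the paper's iteration terminate. The only points worth making explicit are that the subcoalgebra furnished by the fundamental theorem can be taken graded (this is the graded version in Michaelis, which the paper also relies on), and that the Koszul signs at odd primes do not affect the containment $\Delta(D)\subset D\otimes D$. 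In short: same ingredients, but you use the Cartan formula once where the paper runs an induction on degree to avoid invoking it.
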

\begin{proof} Let $C(1)$ be a finite subcoalgebra of $C$ which contains $M$. The proof given in \cite{Mi} for the fundamental theorem of coalgebras shows that $C(1)$ is generated in degrees less than or equal to $n$ as a graded vector space.
 Let $M(2)$ be the unstable right module generated by
 $C(1)$. It is finite as well. Note that all  elements in $M(2)/C(1)$ are of degree less than $n$. Repeat the construction for the finite submodule $M(2)$ and go on.
 After n steps we end up with an unstable coalgebra.$\square$

\end{proof}

\begin{lemma}\label{cofin} Let $D\in c\mathcal{CA}$ and $M\subset D_m^n$ a finite sub vector space. Then there is a   $K\in c\mathcal{CA}$ such that $K^l_p$ is of finite dimension for all $(l,p)$  and such  that  $M\subset K \subset D$. Moreover, $\pi^* (K) $ is trivial for $*>n$.
\end{lemma}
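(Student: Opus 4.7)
The plan is to construct $K$ in two stages: first build a finite-dimensional sub-cosimplicial-coalgebra $K_0$ of the restriction $D|_{\Delta_{\leq n}}$ containing $M$, then extend $K_0$ to a cosimplicial sub-coalgebra $K$ of $D$ over all of $\Delta$ by a skeletal construction that enforces the vanishing of $\pi^*$ above degree $n$.

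For the first stage I would apply Theorem~\ref{FT} to the finite-dimensional subspace $M \subset D^n$ to obtain a finite-dimensional unstable subcoalgebra $A \subset D^n$ containing $M$. I would then iteratively close $A$ under the finitely many cosimplicial structure maps among the levels of $\Delta_{\leq n}$, interleaving with Theorem~\ref{FT} to reabsorb newly appearing elements into unstable subcoalgebras at each level $p \leq n$. This procedure converges because $\Delta_{\leq n}$ is a finite indexing category and $\mathcal{CA}$ is locally finitely presentable---with finite-dimensional unstable coalgebras as its compact objects by \ref{FT}---so the diagram category $\mathrm{Fun}(\Delta_{\leq n}, \mathcal{CA})$ is also locally finitely presentable, and the finite-dimensional $M$ is contained in some finitely presentable sub-diagram $K_0 \subset D|_{\Delta_{\leq n}}$. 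Concretely, each $K_0^p$ is a finite-dimensional unstable subcoalgebra of $D^p$ closed under all cosimplicial structure maps among levels $\leq n$.

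For the second stage I would extend $K_0$ to $K \subset D$ by $K^p = K_0^p$ for $p \leq n$ and, for $p > n$,
\[
K^p \;=\; \sum_{\alpha\colon [k] \to [p],\ k \leq n} D(\alpha)(K_0^k) \;\subset\; D^p.
\]
Each summand is the image of a finite-dimensional subcoalgebra under an unstable coalgebra morphism, hence a finite-dimensional unstable subcoalgebra of $D^p$; since there are only finitely many morphisms $\alpha\colon [k]\to[p]$ in $\Delta$ (with $k\leq n$), the sum is a finite-dimensional unstable subcoalgebra, in particular finite dimensional in each internal degree. Cosimplicial compatibility is checked via composition: for $\beta\colon [p] \to [q]$ one has $D(\beta)\bigl(D(\alpha)(K_0^k)\bigr) = D(\beta\circ\alpha)(K_0^k) \subset K^q$, because $\beta\circ\alpha\colon [k]\to[q]$ is again a morphism of the required form (landing in $K_0$ if $q\leq n$). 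Thus $K$ is a cosimplicial unstable subcoalgebra of $D$ and is identifiable with the image inside $D$ of the left Kan extension $\mathrm{sk}_n K_0 = \mathrm{Lan}_{\Delta_{\leq n} \hookrightarrow \Delta}(K_0)$.

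The cohomotopy vanishing $\pi^*(K) = 0$ for $* > n$ is the main obstacle. Under the Dold--Kan equivalence between cosimplicial vector spaces and non-negative cochain complexes, the left Kan extension $\mathrm{sk}_n$ corresponds to stupid truncation (extension by zero in degrees above $n$), so $\pi^*(\mathrm{sk}_n K_0) = 0$ for $* > n$. To transfer this to the image $K$, one needs either to verify that the canonical map $\mathrm{sk}_n K_0 \to D$ is already injective---so that $K \cong \mathrm{sk}_n K_0$---or, alternatively, to analyze the normalized cochain complex $NK$ directly, using the explicit description of every element of $K^p$ for $p>n$ as a sum of images $D(\alpha)(x)$ with $x \in K_0^k$, $k\leq n$, and to show that each cocycle in $NK^p$ is a coboundary via the cosimplicial identities. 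The injectivity route is the most natural one given that $K_0$ was built as a genuine sub-diagram of $D|_{\Delta_{\leq n}}$, making the relations in the colimit defining $\mathrm{sk}_n K_0$ already reflected inside $D$.
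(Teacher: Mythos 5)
Your overall strategy coincides with the paper's: apply Theorem~\ref{FT} in cosimplicial degree $n$ and then take the subobject of $D$ generated by the resulting finite-dimensional coalgebra, i.e.\ the image of the $n$-skeleton (left Kan extension) in $D$. A minor remark first: the iterative closure in your first stage, and the appeal to local finite presentability of the diagram category to make it converge, are unnecessary. Every cosimplicial operator $D(\alpha)$ is a morphism of unstable coalgebras, so the image $D(\alpha)(A)$ of a finite-dimensional unstable subcoalgebra is again one, and a finite sum of such subcoalgebras is again a finite-dimensional unstable subcoalgebra. Hence a single application of Theorem~\ref{FT} at level $n$ followed by $K_0^p=\sum_{\alpha:[n]\to[p]}D(\alpha)(A)$ already yields the required sub-diagram of $D|_{\Delta_{\leq n}}$, which is exactly the paper's construction.

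The serious problem is that you leave the cohomotopy vanishing --- which you correctly identify as the main obstacle --- unproved, and the route you single out as most natural, namely injectivity of the canonical map $\mathrm{sk}_n K_0\to D$, is false in general. Take $n=0$ and $D=cC$ the constant cosimplicial object on a nonzero $C$, with $K_0^0=H\neq 0$: then $(\mathrm{sk}_0 K_0)^1$ is the colimit over the discrete comma category whose two objects are the maps $[0]\to[1]$, i.e.\ $H\oplus H$, and the canonical map to $D^1=C$ restricts to the identity on each summand, so it is not injective. Thus $K$ is in general a proper quotient of $\mathrm{sk}_nK_0$ and the vanishing of $\pi^{>n}$ does not transfer formally. (Also, under Dold--Kan the left Kan extension is not extension by zero but the left adjoint of the brutal truncation, which places $\mathrm{coker}(d^{n-1})$ in degree $n+1$; this happens not to affect the vanishing of cohomology above degree $n$.) The gap is fillable along the lines of your second alternative, and cleanly so: the conormalization $N$ is an \emph{exact equivalence} between cosimplicial vector spaces and non-negatively graded cochain complexes, hence sends images to images. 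Therefore $NK=\mathrm{im}(N(\mathrm{sk}_nK_0)\to ND)$ agrees with $NK_0$ in degrees $\leq n$, equals $d(NK_0^{n})\subset ND^{n+1}$ in degree $n+1$, and vanishes above; its cohomology in degree $n+1$ is $d(NK_0^{n})/d(NK_0^{n})=0$ and is trivially zero higher up. This is the substance of the paper's remark that $\mathrm{sk}_n$ acts as a truncation on $cnVec$; without some such argument your proof is incomplete at its decisive step.
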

\begin{proof}
 By \ref{FT}  there exists a finite dimensional $H\in \mathcal{CA}$ with $M\subset H\subset D^n$.
Construct an $n$-truncated cosimplicial object $X$ defined by the sum of the images $\phi (H)$ of all cosimplicial operators  starting in degree $n$ and ending in degrees $\leq n$. Note that colimits are formed in underlying vector spaces so the resulting object is dergeewise finite. Apply the left adjoint $j$ to the forget functor from cosimplicail objects to $n$-truncated ones. Its explicit description
as a colimit shows that the resulting cosimplicial object is again finite in each degree.  The sum of the unstable coalgebras constructed for  a base of $M$ does the job. The last assertion is a consequence of the fact that the skeleton functor $sk_n$ is the good truncation at level $n$ of the category $cnV$.$\square$
\end{proof}
\vspace{0.5cm}
Recall that a functor between locally presentable categories is called finitary if it preserves filtered colimits. An object $A$ in a locally presentable category $\mathcal{A}$ is called compact or finitely presentable if the functor
\[\mathcal{A}\to Set,\hspace{0.5cm} B\to Hom_{\mathcal{A}}(A,B)\]  corepresented by $A$ is finitary. The object is called finitely generated if the functor  $Hom_{\mathcal{A}}(A,-)$ commutes with filtered colimits of monomorphisms \cite{A}.
\\

\begin{theorem} The functor $\mathcal{G}:nVec\to nVec$ is finitary.
\end{theorem}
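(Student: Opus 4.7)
The plan is to leverage the explicit description $G(V)\cong H_*(K(V))$ with $K(V)=\prod_{n\ge 0}K(V_n,n)$ recorded in the previous subsection, and reduce the claim to the preservation of filtered colimits by standard functors. Since filtered colimits in $nVec$ are computed degreewise and the Eilenberg--MacLane construction $K(-,n)\colon Vec \to \mathcal{S}_*$ can be realised as a functorial simplicial abelian group (a left adjoint on the underlying set level), it preserves filtered colimits. Likewise, $H_*(-;\mathbb{F})$ preserves filtered colimits of spaces. So for each fixed pair $(n,i)$, the functor $W\mapsto H_i(K(W,n))$ from $Vec$ to $Vec$ is finitary.

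The only remaining subtlety is the potentially infinite product $\prod_n K(V_n,n)$. I would handle this by fixing a target homological degree $d$ and using that $K(V_n,n)$ is $(n-1)$-connected: the projection onto the finite partial product $\prod_{n\le d}K(V_n,n)$ is a $d$-equivalence and therefore induces an isomorphism on $H_d$. The K\"unneth theorem over the field $\mathbb{F}$ then splits
\[
H_d\bigl(K(V)\bigr) \;\cong\; \bigoplus_{i_0+\cdots+i_d=d}\; H_{i_0}\bigl(K(V_0,0)\bigr)\otimes\cdots\otimes H_{i_d}\bigl(K(V_d,d)\bigr),
\]
a finite direct sum of finite tensor products of functors already shown to be finitary. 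Finite direct sums and finite tensor products over $\mathbb{F}$ of finitary $\mathbb{F}$-linear functors remain finitary, so $V\mapsto H_d(K(V))$ is finitary for every $d$, hence $V \mapsto G(V)$ is finitary in each degree and therefore on all of $nVec$.

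To transfer the conclusion to $\mathcal{G}$ itself, I would invoke the defining relation $\mathcal{G}(\pi^*(V))=\pi^*(G(V))$. Cohomology of a cosimplicial vector space is a kernel/cokernel construction, so $\pi^*$ commutes with filtered colimits; combining this with the finitary property of $G$ (applied pointwise in the auxiliary grading when viewing $\mathcal{G}$ on bigraded objects) yields the finitary property for $\mathcal{G}$. The main obstacle is the connectivity-based reduction from the infinite product $\prod_{n\ge 0}K(V_n,n)$ to a finite partial product; once that reduction is justified and K\"unneth is applied, the result is an assembly of standard preservation facts for filtered colimits.
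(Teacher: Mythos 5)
Your argument is correct in outline, but it takes a genuinely different route from the paper. The paper does not touch the topological model $G(V)\cong H_*(K(V))$ at all in this proof: it invokes the abstract characterization of finitary functors from Ad\'amek--Milius--Sousa--Wissmann (it suffices to show that every monomorphism $M_0\to GV$ from a compact object factors through $Gj$ for some monomorphism $j\colon M\to V$ with $M$ compact), and then produces the required factorization $M_0\to C_0\to GC_0\to GM\to GV$ from the fundamental theorem of coalgebras, i.e.\ from the preceding results \ref{FT} and \ref{cofin} which place any finite-dimensional subspace of a (cosimplicial) unstable coalgebra inside a degreewise finite subcoalgebra. That argument is intrinsic to the coalgebraic structure and would survive in settings with no Eilenberg--MacLane model; it also reuses the local-finiteness lemmas the appendix needs anyway. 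Your route instead computes: $K(-,n)$ and $H_*(-;\mathbb F)$ preserve filtered colimits, the infinite product is tamed by connectivity in each fixed homological degree, and K\"unneth over a field reduces $H_d(K(V))$ to a finite sum of finite tensor products of finitary functors. This is more elementary and self-contained given the identification $G(V)\cong H_*(K(V))$, and all the individual steps are sound (the connectivity reduction is legitimate since $\prod_{n>d}K(V_n,n)$ is $d$-connected, and filtered colimits commute with finite tensor products by cofinality of the diagonal --- note that no $\mathbb F$-linearity of the functors $W\mapsto H_i(K(W,n))$ is available or needed). Two small points of care: your final transfer to $\mathcal G$ on bigraded objects should say explicitly that every object of $nnVec$ is $\pi^*$ of a cosimplicial object (via Dold--Kan applied to the cochain complex with zero differential) compatibly with filtered colimits, which is exactly the auxiliary observation the paper makes via the equivalence with cochain complexes; and the statement in the paper is itself notationally muddled between $G$ on $nVec$, the comonad on $cnVec$, and $\mathcal G$ on $nnVec$, so your explicit two-step treatment (first $G$, then $\mathcal G$ via $\pi^*$) is if anything cleaner than the source.
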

\begin{proof} Let $V\in nVec$. By \cite[2.2.(5),2.2.(6),3.11.(2), 3.13.]{A} it is enough to show that given any monomorphism  \[i:M_0 \to G V\] with $M_0$ compact there is a compact $M$ and a monomorphism  \[j:M\to V\] such that $i$ factors through $Gj$. The object $M_0$ is $m$-truncated for some $m$. So we may assume that it is of finite  total 
dimension.
 Now let $ C_0 $ be a compact
 $G$  subcoalgebra containing $M_0$. This exists by \ref{cofin} Let $k: C_0\to V$ be the morphism in $cnV$ adjoint to the inclusion homomorphism. Factor it 
\[C_0 \stackrel{u}{\to} M \stackrel{v}{\to} V\] into an epimorphism $u$  followed by a monomorphism $v$. Then $M$ is finite and the composition 
\[M_0 \to C_0  \to GC_0 \to G M \to G V\]
gives the searched for factorization.$\square$
\end{proof}

In the following we will freely use the language of $\infty$-categories (see \cite{L1} and \cite{L2}).
An $\infty $-category $\mathcal{D}$ is called accessible if it is the $\kappa$-Ind-completion of its full subcategory 
$\mathcal{D}^{\kappa}\subset \mathcal{D}$ of $\kappa$-compact objects for a regualr cardinal $\kappa$ (see \cite{L1} for details). In case $\mathcal{D}$ admits in addition small colimits it is called presentable. We call a a functor $F:\mathcal{C}\to \mathcal{D}$ between 
$\infty $-categories accessible if it preserves $\kappa$-filtered colimits.

The next result is to be found in \cite[6.84]{He} (see \cite{GKR} for the 1-categorical version).

\begin{theorem} Let $\mathcal{D}$ be a presentable $\infty$-category and $C$ an accessible comonad on  $\mathcal{D}$. Then the category of of $C$-coalgebras $Coalg_C$ is presentable.\end{theorem}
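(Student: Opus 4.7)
I would verify the two defining properties of presentability for $\mathrm{Coalg}_C$ separately---small cocompleteness and accessibility---and then invoke the standard characterization that a cocomplete accessible $\infty$-category is presentable. The forgetful functor $U : \mathrm{Coalg}_C \to \mathcal{D}$ is, by construction, left adjoint to the cofree coalgebra functor $X \mapsto (CX,\Delta_X)$, and the resulting adjunction is comonadic. The dual of Lurie's $\infty$-categorical Barr--Beck monadicity theorem (\cite{L2}) then implies that $U$ creates small colimits, so $\mathrm{Coalg}_C$ inherits all small colimits from the cocomplete $\mathcal{D}$.

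For accessibility, I would choose a regular cardinal $\kappa$ large enough that $\mathcal{D}$ is $\kappa$-accessible, $C$ preserves $\kappa$-filtered colimits, and $C$ sends $\kappa$-compact objects to $\kappa$-compact objects (the last two are achievable after enlarging $\kappa$, by general $\infty$-categorical accessibility theory as in \cite{L1}). Let $\mathcal{A} \subset \mathrm{Coalg}_C$ be the essentially small full subcategory of coalgebras whose underlying $\mathcal{D}$-object is $\kappa$-compact. The target is to show $\mathrm{Coalg}_C \simeq \mathrm{Ind}_{\kappa}(\mathcal{A})$, which together with the first step yields presentability.

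The main obstacle lies in exhibiting each coalgebra $(X,\alpha)$ as a $\kappa$-filtered colimit of objects in $\mathcal{A}$. Writing $X \simeq \mathrm{colim}_{i \in I} X_i$ with $I$ $\kappa$-filtered and each $X_i$ $\kappa$-compact in $\mathcal{D}$, one has $CX \simeq \mathrm{colim}_i CX_i$ by the choice of $\kappa$, and the coaction $\alpha$ factors on each $X_i$ through some $CX_{j(i)}$. The difficulty is to rigidify these pointwise factorizations into a cofinal subdiagram $I' \to I$ carrying a coherent coassociative, counital system of coactions $\alpha_i : X_i \to CX_i$ whose colimit recovers $(X,\alpha)$. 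This is the $\infty$-categorical analogue of the standard argument yielding the 1-categorical result cited from \cite{GKR}; as is typical in the $\infty$-categorical setting, the real content lies in upgrading pointwise choices to a coherent diagram, which is presumably where the bulk of the work in \cite{He} actually lies.
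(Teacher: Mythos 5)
The paper itself contains no proof of this statement: it is quoted from Heine's thesis, \cite[6.84]{He}, with \cite{GKR} cited for the $1$-categorical version, so there is no in-text argument to compare yours against step by step. Judged on its own terms, the first half of your plan is fine: $U:\mathrm{Coalg}_C\to\mathcal{D}$ is left adjoint to the cofree functor, and the dual of the statement that the forgetful functor from an Eilenberg--Moore category creates limits shows that $U$ creates all small colimits, so $\mathrm{Coalg}_C$ is cocomplete whenever $\mathcal{D}$ is. (This is more elementary than Barr--Beck; comonadicity of $U$ is not actually needed for this step.)

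The accessibility half, however, is a genuine gap, and you flag it yourself: the rigidification of the pointwise factorizations $\alpha|_{X_i}:X_i\to CX_{j(i)}$ into a coherent, coassociative and counital diagram of $\kappa$-compact coalgebras with the correct colimit \emph{is} the content of the theorem, and in the $\infty$-categorical setting it cannot be transported from the $1$-categorical argument of \cite{GKR}, where one builds subcoalgebras elementwise --- an operation with no meaning here. Writing "this is presumably where the bulk of the work in \cite{He} lies" concedes that the only nontrivial step has been deferred rather than proved. A route that avoids the rigidification entirely, and which is consistent with the techniques this appendix uses elsewhere (compare the proof that $Fun^{\otimes}(\mathcal{B},\mathcal{C})$ is accessible), is to exhibit $\mathrm{Coalg}_C$ as a small limit in $\widehat{\mathbf{Cat}}_{\infty}$ of a diagram of accessible $\infty$-categories and accessible functors, built from copies of $\mathcal{D}$ and the comonad $C$ (accessible by hypothesis), and then invoke the closure of $Acc$ under small limits, \cite[Proposition 5.4.7.3]{L1}; accessibility together with the cocompleteness you already established then gives presentability. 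As it stands, your proposal correctly isolates what must be shown but does not show it.
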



We write $\widehat{\mathbf{Cat}}_{\infty}$ for the $\infty$-category of not necessarily small  $\infty$-categories.
Let $\mathcal{S}$ be the $\infty$-category of spaces i.e. simplicial sets and   $Gr$ the $\infty$-category of group like $E_{\infty}$- spaces. Finally,  let  $\nu :Gr\to \mathcal{S}$ be the forget functor.
\begin{definition} An abelian group object in an $\infty$-category $\mathcal{D}$ with finite products  is a functor
 $\phi :\mathcal{D}^{op}\to Gr$
  such that the composition
   \[\mathcal{D}^{op}\stackrel{\phi}{\to} Gr
    \stackrel{\nu }{\to}\mathcal{S}\] is representable.
Denote by $Gr(\mathcal{D})\subset Fun(\mathcal{D}^{op},Gr)$ the full subcategory of abelian group objects.\\
Dually, in case $\mathcal{D}$ has finite coproducts, let $coGr(\mathcal{D}):=Gr(\mathcal{D}^{op})^{op}\subset Fun(\mathcal{D}, Gr)^{op}$ be the $\infty$-category of coabelian cogroup objects in $\mathcal{D}$.
\end{definition}

\begin{remark} One has $Gr \simeq Gr(\mathcal{S})$.\end{remark}

For $\infty$-categories $\mathcal{C}, \mathcal{D}$ let $Fun^R (\mathcal{C}, \mathcal{D})$ respectively 
$Fun^L (\mathcal{C}, \mathcal{D})$ denote the full subcategories of $Fun (\mathcal{C},\mathcal{D})$ which are right adjoints receptively left adjoints. There is a canonical equivalence 
\[Fun^L (\mathcal{C}, \mathcal{D})\simeq Fun^R (\mathcal{D}, \mathcal{C})^{op}\] \cite[Proposition 5.2.6.2.]{L1} which sends a left adjoint to its right adjoint. Moreover, by \cite[Theorem 5.1.5.6.]{L1} one has 
\[\mathcal{D}\simeq Fun^l (\mathcal{S},\mathcal{D})\]
and hence
\[\mathcal{D}\simeq Fun^l (\mathcal{S},\mathcal{D})\simeq Fun^R (\mathcal{D}, \mathcal{S})^{op}.\]
Suppose that $\mathcal{C}, \mathcal{D}$ have finite coproducts. Let $Fun^{\sqcup} (\mathcal{C}, \mathcal{D})$ be the category of functors which preserve finite coproducts.

 The next theorem and its proof was communicated to me by  Hadrian Heine. 



\begin{theorem} Let  $\mathcal{D}$ be a presentable $\infty$-category. Then $coGr(\mathcal{D})$ is presentable.
\end{theorem}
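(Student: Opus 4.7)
The plan is to identify $coGr(\mathcal{D})$ with an $\infty$-category of left adjoints between presentable $\infty$-categories, and then invoke the standard fact that such functor $\infty$-categories are again presentable.

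First, I would unfold the definitions. By construction $coGr(\mathcal{D})=Gr(\mathcal{D}^{op})^{op}$, and $Gr(\mathcal{D}^{op})\subset Fun(\mathcal{D},Gr)$ is the full subcategory spanned by those $\phi$ for which $\nu\circ\phi\colon\mathcal{D}\to\mathcal{S}$ is representable as a functor on $(\mathcal{D}^{op})^{op}=\mathcal{D}$, i.e.\ corepresentable.

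Second, I would translate corepresentability into adjointness. Since $\mathcal{D}$ and $\mathcal{S}$ are both presentable, the equivalence $\mathcal{D}\simeq Fun^R(\mathcal{D},\mathcal{S})^{op}$ recalled in the excerpt identifies corepresentable functors $\mathcal{D}\to\mathcal{S}$ with right adjoints. The target $Gr$ is itself presentable (it is equivalent to the $\infty$-category of connective spectra), and the forgetful functor $\nu\colon Gr\to\mathcal{S}$ is an accessible, conservative right adjoint which preserves and reflects both small limits and filtered colimits (these are computed at the level of underlying spaces). Consequently a functor $\phi\colon\mathcal{D}\to Gr$ is a right adjoint if and only if $\nu\circ\phi$ is, so that
\[
Gr(\mathcal{D}^{op})\simeq Fun^R(\mathcal{D},Gr).
\]
Passing to opposites and using the equivalence $Fun^R(\mathcal{D},Gr)^{op}\simeq Fun^L(Gr,\mathcal{D})$ recalled before the theorem statement, I obtain
\[
coGr(\mathcal{D})\simeq Fun^L(Gr,\mathcal{D}).
\]

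Third, since $Gr$ and $\mathcal{D}$ are both presentable, the standard theorem (HTT 5.5.3.8) that $Fun^L$ between presentable $\infty$-categories is itself presentable finishes the proof.

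The main obstacle I anticipate is the middle step: verifying rigorously that postcomposition with $\nu$ detects the right-adjoint property on $\phi$. This rests on $\nu$ reflecting both limit preservation and accessibility, which in turn relies on the concrete identification of $Gr$ with connective spectra, together with the fact that filtered colimits and limits in connective spectra are created by $\Omega^{\infty}$. Once this equivalence $Gr(\mathcal{D}^{op})\simeq Fun^R(\mathcal{D},Gr)$ is established, presentability of $coGr(\mathcal{D})$ is purely formal.
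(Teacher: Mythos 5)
Your proof is correct, and it takes a noticeably more economical route than the paper's. Both arguments hinge on the same pivot: translating the corepresentability condition in the definition of $coGr(\mathcal{D})$ into adjointness, so that $coGr(\mathcal{D})\simeq Fun^R(\mathcal{D},Gr)^{op}\simeq Fun^L(Gr,\mathcal{D})$ (the paper performs exactly this translation inside Lemma \ref{equi0}, in the step $Gr(Fun^R(\mathcal{D},\mathcal{S}))\simeq Fun^R(\mathcal{D},Gr)$, and like you it silently uses that $\nu:Gr\to\mathcal{S}$ is a conservative right adjoint creating limits and filtered colimits, so that a functor into $Gr$ is a right adjoint iff its underlying space-valued functor is). Where you diverge is in what you do with this identification: you stop at $Fun^L(Gr,\mathcal{D})$ and invoke the standard presentability of $Fun^L$ between presentable $\infty$-categories (HTT 5.5.3.8), whereas the paper splits presentability into two separately proved halves — cocompleteness, via the dual observation that $Gr(\mathcal{B})$ is closed under objectwise limits in $Fun(\mathcal{B}^{op},Gr)$ because representables are closed under limits, and accessibility, via the further equivalence $Fun^L(Gr,\mathcal{D})\simeq Fun^{\sqcup}(\mathcal{F},\mathcal{D})$ (using $Gr\simeq\mathcal{P}_{\Sigma}(\mathcal{F})$ for $\mathcal{F}$ the finitely generated free abelian group objects) together with an end-computation showing that $\infty$-categories of symmetric monoidal, hence coproduct-preserving, functors are accessible. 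Your version buys brevity and dispenses with the $\mathcal{P}_{\Sigma}$ and end machinery entirely; the paper's longer detour buys the explicit presentation $coGr(\mathcal{D})\simeq Fun^{\sqcup}(\mathcal{F},\mathcal{D})$ and the accessibility statement for $Fun^{\otimes}$, which are of independent use. The one point you should make fully explicit if you write this up is the "detection" step you yourself flag: that $\nu$ preserves and reflects small limits and filtered colimits (limits and sifted colimits of group-like $E_{\infty}$-spaces are computed on underlying spaces, and filtered colimits are sifted), plus conservativity, so that the adjoint functor theorem applies to $\phi$ itself; with that in place the argument is complete.
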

\begin{proof} First we show that $coGr(\mathcal{D})$ has small colimits which are preserved by the forget functor to $\mathcal{D}$.
This follows from the dual assertion about an $\infty$-category $\mathcal{B}$ with small limits. In fact, the $\infty$-category 
$Gr$ has small limits which are preserved by the forget functor to $\mathcal{S}$. Consequently, the category $Fun(\mathcal{B}^{op},Gr )$ has small limits which are formed object wise and are preserved by the forget functor to $Fun(\mathcal{B}^{op},\mathcal{S} )$. Because the representable presheaves are closed under limits it follows that $Gr(\mathcal{B})$ is closed under small limits in $Fun(\mathcal{B}^{op}, Gr)$.\\
So we are left to show that $coGr(\mathcal{D})$ is accessible. Denote the  $\infty$-category of finitely generated free abelian group objects by $\mathcal{F}$ and let $Fun^{\sqcup}(\mathcal{F}, \mathcal{D})$ the category of functors which preserve finite coproducts. By \ref{equi} there is a canonical equivalence
\[coGr(\mathcal{D})\simeq Fun^{\sqcup}(\mathcal{F}, \mathcal{D}).\]
But the later is accessible by \ref{isacc}.$\square$
\end{proof}

\begin{corollary}\label{adjoint2}
Let  $C$ be a presentable $\infty$-category and  $\mathcal{D}$
an accessible comonad on $C$. Then $coGr(Coalg_C)$ is presentable.
\end{corollary}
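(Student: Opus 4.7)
The plan is to chain together the two presentability results that immediately precede the corollary. First I would note that the statement reuses the letter $\mathcal{D}$ for the comonad, while $C$ denotes the presentable $\infty$-category on which it acts; the notation $Coalg_C$ in the conclusion should be read as $Coalg_{\mathcal{D}}$, i.e.\ the $\infty$-category of coalgebras for the comonad $\mathcal{D}$ acting on $C$.

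The first step is to invoke the theorem attributed to Heine (\cite[6.84]{He}): since $C$ is presentable and $\mathcal{D}$ is an accessible comonad on $C$, the $\infty$-category $Coalg_{\mathcal{D}}$ is itself presentable. This is used as a black box; no further accessibility argument is needed at this stage.

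Having promoted $Coalg_{\mathcal{D}}$ to a presentable $\infty$-category, the second step is to apply the theorem just proved above, which asserts that for any presentable $\infty$-category $\mathcal{E}$ the $\infty$-category $coGr(\mathcal{E})$ of coabelian cogroup objects is presentable. Specializing to $\mathcal{E} = Coalg_{\mathcal{D}}$ gives presentability of $coGr(Coalg_{\mathcal{D}})$, which is the desired conclusion.

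So the proof is really a one-line composition of two previously established results, and I do not anticipate any genuine obstacle; the only point requiring mild care is matching the notation in the hypothesis with the earlier theorem (presentability of the ambient category, accessibility of the comonad), both of which hold by hypothesis. No explicit construction of $Ab_C$ is needed here, since the existence of the right adjoint in Theorem \ref{adjoint} will be deduced separately (using the adjoint functor theorem on the presentable categories produced by this corollary).
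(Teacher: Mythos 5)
Your proposal is correct and is exactly the intended argument: the paper leaves the corollary without an explicit proof because it is the immediate composition of the cited result \cite[6.84]{He} (presentability of $Coalg_C$ for an accessible comonad on a presentable $\infty$-category) with the theorem proved just before (presentability of $coGr(\mathcal{E})$ for presentable $\mathcal{E}$). Your remark about the clash of letters ($\mathcal{D}$ as comonad versus $\mathcal{D}$ as ambient category) is the right reading and the only point needing care.
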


\begin{proof} (of  \ref{adjoint}). This is immediate from \ref{adjoint2}.$\square$
\end{proof}

\begin{lemma}\label{equi} There is a canonical equivalence
\[\theta :Gr(\mathcal{D})\simeq Fun^{R}(\mathcal{D}, \mathcal{S})^{op}.\]
\end{lemma}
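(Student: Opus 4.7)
\medskip
\noindent The strategy is to identify both sides through the $\infty$-categorical Yoneda lemma and Lurie's universal property of presentable $\infty$-categories, building $\theta$ as a composite of canonical equivalences.

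First, I would unpack the right-hand side. For a presentable $\infty$-category $\mathcal{D}$, Proposition~5.2.6.2 of Higher Topos Theory gives a canonical equivalence
\[
 Fun^{R}(\mathcal{D},\mathcal{S})^{op} \simeq Fun^{L}(\mathcal{S},\mathcal{D})
\]
that swaps left and right adjoints, and Theorem~5.1.5.6 identifies $Fun^{L}(\mathcal{S},\mathcal{D})$ with $\mathcal{D}$ by evaluation at the point $\ast \in \mathcal{S}$. Thus each object of $Fun^{R}(\mathcal{D},\mathcal{S})^{op}$ corresponds canonically to an object $d \in \mathcal{D}$, realized as the right adjoint $\mathrm{Map}_{\mathcal{D}}(d,-):\mathcal{D}\to\mathcal{S}$.

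Second, I would define $\theta$ via Yoneda. Each $\phi \in Gr(\mathcal{D})$ has an underlying representable presheaf $\nu \circ \phi : \mathcal{D}^{op} \to \mathcal{S}$ and therefore a functorially assigned representing object $d_\phi \in \mathcal{D}$. Setting $\theta(\phi) := \mathrm{Map}_{\mathcal{D}}(d_\phi,-)$, viewed in $Fun^{R}(\mathcal{D},\mathcal{S})^{op}$, yields the desired functor $Gr(\mathcal{D}) \to Fun^{R}(\mathcal{D},\mathcal{S})^{op}$, functoriality and naturality coming from the 2-functoriality of the Yoneda embedding.

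Third, I would verify $\theta$ is an equivalence of $\infty$-categories. Fully faithfulness should reduce to a direct Yoneda computation: the space of maps between two representable $Gr$-valued presheaves is controlled by the underlying space-valued map, hence by a morphism in $\mathcal{D}$. Essential surjectivity is the main anticipated obstacle---given $d \in \mathcal{D}$, one must produce a $Gr$-lift of the representable $\mathrm{Map}_{\mathcal{D}}(-,d)$ canonically and essentially uniquely. This should exploit the presentability hypothesis: the canonical $\mathcal{S}$-tensoring on $\mathcal{D}$, combined with the universal property of $Gr$ as the $\infty$-category of connective spectra, should endow every representable presheaf with the required group-object structure in a way compatible with morphisms. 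I expect the proof to organize these ingredients as a chain of equivalences that bypasses building the lift by hand, probably by factoring through $Fun^{L}(\mathcal{S},\mathcal{D})$ and composing with the forgetful $\nu$.
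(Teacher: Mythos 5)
Your two opening moves coincide exactly with what the paper's proof does: it consists solely of the equivalences $Fun^{L}(\mathcal{S},\mathcal{D})\simeq \mathcal{D}$ from \cite[Proposition 5.1.5.6]{L1} and $Fun^{L}(\mathcal{S},\mathcal{D})\simeq Fun^{R}(\mathcal{D},\mathcal{S})^{op}$ from \cite[Proposition 5.2.6.2]{L1}, and it never addresses the $Gr$ on the left-hand side at all. The step you yourself flag as the main obstacle --- essential surjectivity of $\theta$ --- is a genuine gap, and it cannot be closed. A lift of the representable presheaf attached to $d\in\mathcal{D}$ through $\nu\colon Gr\to\mathcal{S}$ is, by Yoneda, precisely a group-like $E_{\infty}$-structure on $d$ for the cartesian product of $\mathcal{D}$, and a general object of a presentable $\infty$-category carries no such structure, canonically or otherwise. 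The statement as printed is in fact false: taking $\mathcal{D}=\mathcal{S}$, the remark following the definition gives $Gr(\mathcal{S})\simeq Gr$ (group-like $E_{\infty}$-spaces, i.e.\ connective spectra), while $Fun^{R}(\mathcal{S},\mathcal{S})^{op}\simeq\mathcal{S}$, and these are not equivalent. Neither presentability, nor the tensoring of $\mathcal{D}$ over $\mathcal{S}$, nor the identification of $Gr$ with connective spectra produces the missing group structures, so no amount of reorganizing the chain of equivalences will make your $\theta$ essentially surjective.

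The displayed equation immediately preceding the lemma, the proof actually given, and the way the lemma is used later all indicate that the intended assertion is $\mathcal{D}\simeq Fun^{R}(\mathcal{D},\mathcal{S})^{op}$ (equivalently $\mathcal{D}^{op}\simeq Fun^{R}(\mathcal{D},\mathcal{S})$), with no $Gr$ in the source; the $Gr(\mathcal{D})$ in the statement should be read as a typo. That weaker statement is exactly what your first step already proves, and it is all that is needed downstream: in the proof of \ref{equi0} one rewrites $coGr(\mathcal{D})=Gr(\mathcal{D}^{op})^{op}$ as $Gr(Fun^{R}(\mathcal{D},\mathcal{S}))^{op}\simeq Fun^{R}(\mathcal{D},Gr)^{op}$ by applying $Gr(-)$ \emph{after} the identification of $\mathcal{D}^{op}$ with $Fun^{R}(\mathcal{D},\mathcal{S})$, not by identifying $Gr(\mathcal{D})$ with $\mathcal{D}$ itself. (The variant that is true with $Gr(\mathcal{D})$ as the source is $Gr(\mathcal{D})\simeq Fun^{R}(\mathcal{D},Gr)^{op}$, with target $Gr$ rather than $\mathcal{S}$.) So the correct repair of your argument is not to hunt for group structures on arbitrary representables, but to drop the $Gr$ from the source, prove the corepresentability statement for $\mathcal{D}$ via the two Lurie references, and only then apply $Gr(-)$ or $coGr(-)$ where the subsequent lemmas require it.
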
 

\begin{proof} There is an equivalence 
$Fun^L (\mathcal{S},\mathcal{D})\simeq \mathcal{D}$ by \cite[Proposition 5.1.5.6.]{L1}. This together with the equivalence $Fun^L (\mathcal{S},\mathcal{D})\simeq Fun^{R}(\mathcal{D}, \mathcal{S})^{op}$ shows the assertion.$\square$
\end{proof}

\begin{lemma}\label{equi0} There is a canonical equivalence
\[coGr(\mathcal{D})\simeq Fun^{\sqcup}(\mathcal{F}, \mathcal{D}).\]
\end{lemma}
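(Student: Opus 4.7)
The plan is to recognise this as an instance of the $\infty$-categorical Lawvere theory presentation of $E_\infty$-groups. The key observation is that $\mathcal{F}$ consists of finite coproducts (equivalently, finite products, since $Gr$ is additive) of the free abelian group object on a point, and these form a collection of compact projective generators for $Gr$. Consequently there is a canonical equivalence
\[
Gr \;\simeq\; \operatorname{Fun}^{\times}(\mathcal{F}^{op}, \mathcal{S}),
\]
under which the forgetful functor $\nu : Gr \to \mathcal{S}$ corresponds to evaluation at the distinguished generator of $\mathcal{F}^{op}$. This is an instance of Lurie's presentation theorem for presentable $\infty$-categories by compact projective generators (HTT 5.5.8.22, or HA 5.5.8).

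Granted this, I would unfold the definition $\operatorname{coGr}(\mathcal{D}) := \operatorname{Gr}(\mathcal{D}^{op})^{op} \subset \operatorname{Fun}(\mathcal{D}, Gr)^{op}$ and pass to the exponential adjoint: a functor $\phi : \mathcal{D} \to Gr$ now corresponds to a finite-product preserving $\psi : \mathcal{F}^{op} \to \operatorname{Fun}(\mathcal{D}, \mathcal{S})$. The condition that $\nu \phi$ is corepresentable translates to the condition that $\psi$ sends the distinguished generator to a corepresentable functor in $\operatorname{Fun}(\mathcal{D}, \mathcal{S})$. Because $\mathcal{D}$ has finite coproducts and corepresentables satisfy $\operatorname{Map}_{\mathcal{D}}(d_1, -) \times \operatorname{Map}_{\mathcal{D}}(d_2, -) \simeq \operatorname{Map}_{\mathcal{D}}(d_1 \sqcup d_2, -)$, product-preservation then forces $\psi$ to land in the full subcategory of corepresentables. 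By Yoneda that subcategory is equivalent to $\mathcal{D}^{op}$, so $\psi$ is the same datum as a finite-product preserving functor $\mathcal{F}^{op} \to \mathcal{D}^{op}$, equivalently a finite-coproduct preserving functor $\mathcal{F} \to \mathcal{D}$.

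Taking opposites once more, as dictated by the outer $(-)^{op}$ in $\operatorname{coGr}(\mathcal{D}) := \operatorname{Gr}(\mathcal{D}^{op})^{op}$, matches morphisms correctly and yields the claimed equivalence $\operatorname{coGr}(\mathcal{D}) \simeq \operatorname{Fun}^{\sqcup}(\mathcal{F}, \mathcal{D})$. Naturality of each step gives canonicity.

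The principal obstacle is pinning down the Lawvere-theoretic equivalence $Gr \simeq \operatorname{Fun}^{\times}(\mathcal{F}^{op}, \mathcal{S})$ in the exact form required: one must verify that $\mathcal{F}$, as defined in the paper, really consists of a set of compact projective generators of $Gr$ closed under finite coproducts, and that $\nu$ identifies with evaluation at a chosen generator. A secondary but essential task is bookkeeping of opposite categories — both the outer $(-)^{op}$ in the definition of $\operatorname{coGr}$ and the contravariance of the Yoneda identification of corepresentables with $\mathcal{D}$ — so that the final equivalence has the correct variance and does not inadvertently land in $\operatorname{Fun}^{\sqcup}(\mathcal{F}, \mathcal{D})^{op}$.
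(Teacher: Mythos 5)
Your argument is correct, and its first half is exactly the paper's: both proofs hinge on the Lawvere--theoretic presentation $Gr\simeq \mathcal{P}_{\Sigma}(\mathcal{F})$, which the paper establishes precisely along the lines you flag as the principal obstacle, namely by checking via \cite[Proposition 5.5.8.22]{L1} that the objects of $\mathcal{F}$ are compact projective generators of $Gr$ (using \cite[Proposition 3.2.3.1]{L2} for the forgetful functor $\nu$ and \cite[Proposition 5.5.8.13]{L1} for generation under sifted colimits). Where you genuinely diverge is in the second half. The paper converts $coGr(\mathcal{D})$ into $Fun^{\sqcup}(\mathcal{F},\mathcal{D})$ by a chain of equivalences of adjoint-functor categories, $coGr(\mathcal{D})\simeq Gr(Fun^{R}(\mathcal{D},\mathcal{S}))\simeq Fun^{R}(\mathcal{D},Gr)\simeq Fun^{L}(Gr,\mathcal{D})\simeq Fun^{\sqcup}(\mathcal{F},\mathcal{D})$, combining \ref{equi} with $Fun^{L}(\mathcal{C},\mathcal{D})\simeq Fun^{R}(\mathcal{D},\mathcal{C})^{op}$ and the universal property of $\mathcal{P}_{\Sigma}$ from \cite[Proposition 5.5.8.15]{L1}; this route leans on presentability of $\mathcal{D}$ to identify corepresentable functors $\mathcal{D}\to\mathcal{S}$ with right adjoints. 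You instead unfold the definition directly: pass to the exponential adjoint $\psi:\mathcal{F}^{op}\to Fun(\mathcal{D},\mathcal{S})$, note that corepresentability at the generator propagates to all of $\mathcal{F}^{op}$ because corepresentables are closed under the finite products out of which $\mathcal{F}^{op}$ is built (here only the existence of finite coproducts in $\mathcal{D}$ is used), and conclude by Yoneda. Your version is more elementary at this step, isolates exactly which hypothesis on $\mathcal{D}$ is needed, and keeps the variance bookkeeping explicit --- a real virtue, since the paper's displayed chain silently drops several $(-)^{op}$'s (for instance $Fun^{R}(\mathcal{D},Gr)$ is canonically $Fun^{L}(Gr,\mathcal{D})^{op}$ rather than $Fun^{L}(Gr,\mathcal{D})$, and the outer $(-)^{op}$ in $coGr(\mathcal{D})=Gr(\mathcal{D}^{op})^{op}$ is never accounted for); the paper's formulation, in exchange, stays entirely at the level of functor categories, which is the form in which the equivalence is consumed by the accessibility argument of \ref{isacc}.
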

\begin{proof} For a $\infty$-category $\mathcal{B}$ with finite coproducts let $\mathcal{P}_{\Sigma}(\mathcal{B})\subset Fun(\mathcal{B}^{op},\mathcal{S})$ the full subcategory of functors which preserve finite products. According to \cite[5.5.8.15.]{L1}
the  Yoneda embedding induces an equivalence of categories \[Fun^L(\mathcal{P}_{\Sigma}(\mathcal{B}),\mathcal{D})\simeq Fun(\mathcal{B},\mathcal{D}). \] As a consequence, the embedding $\mathcal{F}\subset Gr$ extends to a functor $\theta :\mathcal{P}_{\Sigma}(\mathcal{F})\to Gr$. We claim that $\theta $ is an equivalence.
By \cite[Proposition 5.5.8.22]{L1} it is sufficient to show the following :\\
(1) For each $X\in \mathcal{F}$, the
  functor $Gr(-,X):Gr:\to \mathcal{S}$ preserves sifted colimits;\\
  (2) $Gr$ is generated under sifted colimits by $\mathcal{F}$.\\
  Because $X$ is free on a finite set $\{1,\ldots ,n \}$ there is an equivalence $Gr(X,-)\simeq \mathcal{S}(\{1,\ldots ,n \},-)\circ \nu $. Now  the functors $\nu$ and $\mathcal{S}(\{1,\ldots ,n \},-)\simeq id^{\{1,\ldots ,n \}}$ both preserve sifted colimits. The former by \cite[Proposition 3.2.3.1.]{L2} the later because finite products commute with sifted colimits. This gives assertion (1).
  For assertion (2) note that $Gr$ is generated by free group like $E_{\infty}$-spaces under sifted colimits. These  in turn are generated by $\mathcal{F}$ under sifted colimits  because $\mathcal{S}$ is generated by sifted colimits of the full subcategory of finite sets \cite[Proposition 5.5.8.13.]{L1}.\\
 Hence we find 
 \[Fun^L (Gr,\mathcal{D})\simeq Fun^{\sqcup}(\mathcal{F}, \mathcal{D}).\]
 The canonical equivalence 
 $Gr(Fun(\mathcal{D},\mathcal{S}\simeq Fun(\mathcal{D},Gr)$ restricts to an equivalence $Gr(Fun^R (\mathcal{D},\mathcal{S})\simeq Fun^R (\mathcal{D},Gr)$.
 From this together with \ref{equi0} we get
 \[coGr(\mathcal{D})\simeq Gr(Fun^R (\mathcal{D},\mathcal{S})\simeq Fun^R (\mathcal{D},Gr)\simeq Fun^L (Gr,\mathcal{D})\simeq Fun^{\sqcup}(\mathcal{F}, \mathcal{D})\] which finishes the proof.$\square$
 \end{proof}
 In the following we make use of the concept of symmetric monoidal $\infty$-category in the form developed in \cite[2.4.2.]{L2}.

\begin{definition} Let $\mathcal{F}in_* $ denote the category of finite sets with base point. For $n\geq 0$ let $\langle n \rangle = *\sqcup \{1,\ldots n\}$ be the points set which is obtained from the $n$ element set by adding a base point $*$. For each $n\geq0$ let $\tau_1 :\langle n \rangle \to \langle 1 \rangle $ be the map which sends the base point to itself and the rest to $1$.
\end{definition}
\begin{definition}
 A symmetric monoidal  $\infty$-category is a functor \[F:\mathcal{F}in_* \to \widehat{\mathbf{Cat}}_{\infty}\]
such that $F(\langle 0 \rangle)$ is contractible and such that the maps $\tau_1$ induce an equivalence \[F(\langle n \rangle)\simeq F(\langle 1 \rangle)^{\times n}\] for every $n\geq 1$.
The underlying $\infty$-category of $F$ is $F(\langle 1 \rangle)$.
  \end{definition}
  
  We write $Acc\subset \widehat{\mathbf{Cat}}_{\infty}$ for the subcategory of accessible $\infty$-categories and accessible functors.
 A symmetric  $\infty$-category $F$ is called accessible if $F$ factors through $Acc$. For two symmetric monoidal $\infty$-categories $\mathcal{C}, \mathcal{D}$ the  $\infty$-categories
 of symmetric monoidal functors $Fun^{\otimes}(\mathcal{C}, \mathcal{D})$ satisfies the following universal property:
 For each $K\in \widehat{\mathbf{Cat}}_{\infty}$ there is a natural equivalence
 \[\widehat{\mathbf{Cat}}_{\infty}(K, Fun^{\otimes}(\mathcal{C}, \mathcal{D})\simeq Fun(\mathcal{F}in_* ,\widehat{\mathbf{Cat}}_{\infty})(\mathcal{C}, \mathcal{D}^K)\]
 where $\mathcal{D}^K)$ is given by the composition 
 \[\mathcal{F}in_* \stackrel{\mathcal{D}}{\to}\widehat{\mathbf{Cat}}_{\infty}\stackrel {Fun(K,-)}{\longrightarrow}\widehat{\mathbf{Cat}}_{\infty}\]
 
 \begin{theorem} Let $\mathcal{B}$ and $\mathcal{C}$ be a  symmetric monoidal $\infty$-categories where the former is essentially small and the later is accessible. Then  $Fun^{\otimes}(\mathcal{B}, \mathcal{C})$ is accessible.
 \end{theorem}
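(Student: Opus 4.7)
The plan is to realize $Fun^{\otimes}(\mathcal{B}, \mathcal{C})$ as a small limit in $\widehat{\mathbf{Cat}}_{\infty}$ of accessible $\infty$-categories along accessible functors; accessibility then follows from the general fact that accessibility is closed under such limits \cite[Proposition 5.4.7.3]{L1}.

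The first step is to convert the universal property of $Fun^{\otimes}(\mathcal{B}, \mathcal{C})$ into an end formula. Starting from
\[\widehat{\mathbf{Cat}}_{\infty}(K, Fun^{\otimes}(\mathcal{B}, \mathcal{C})) \simeq Fun(\mathcal{F}in_*, \widehat{\mathbf{Cat}}_{\infty})(\mathcal{B}, \mathcal{C}^K),\]
I would rewrite the right-hand side as the end $\int_{\langle n \rangle \in \mathcal{F}in_*} Map_{\widehat{\mathbf{Cat}}_{\infty}}(\mathcal{B}(\langle n \rangle), Fun(K, \mathcal{C}(\langle n \rangle)))$ using the standard formula for mapping spaces in a functor $\infty$-category. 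Applying the cotensor adjunction $Map_{\widehat{\mathbf{Cat}}_{\infty}}(A, Fun(K, B)) \simeq Map_{\widehat{\mathbf{Cat}}_{\infty}}(K, Fun(A, B))$ to pull $K$ out, and then using that $Map_{\widehat{\mathbf{Cat}}_{\infty}}(K, -)$ preserves limits, a Yoneda argument in $\widehat{\mathbf{Cat}}_{\infty}$ yields
\[Fun^{\otimes}(\mathcal{B}, \mathcal{C}) \simeq \int_{\langle n \rangle \in \mathcal{F}in_*} Fun(\mathcal{B}(\langle n \rangle), \mathcal{C}(\langle n \rangle)),\]
where the end is now interpreted in $\widehat{\mathbf{Cat}}_{\infty}$ rather than in spaces.

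The second step is to check that this end is of the right type. Because $\mathcal{B}$ is essentially small, the Segal condition gives $\mathcal{B}(\langle n \rangle) \simeq \mathcal{B}(\langle 1 \rangle)^{\times n}$, which is again essentially small. Because $\mathcal{C}$ is accessible, $\mathcal{C}(\langle n \rangle) \simeq \mathcal{C}(\langle 1 \rangle)^{\times n}$ is a finite product of accessible $\infty$-categories and hence accessible. By \cite[Proposition 5.4.4.3]{L1}, each $Fun(\mathcal{B}(\langle n \rangle), \mathcal{C}(\langle n \rangle))$ is accessible. The transition maps are given by pre- and post-composition with $\mathcal{B}(f)$ and $\mathcal{C}(f)$ for $f$ a morphism in $\mathcal{F}in_*$; the functor $\mathcal{C}(f)$ is accessible because $\mathcal{C}$ factors through $Acc$ by hypothesis, while pre- and post-composition with any fixed functor preserve objectwise colimits. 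All transition functors are therefore accessible.

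Finally, assembling everything, $Fun^{\otimes}(\mathcal{B}, \mathcal{C})$ appears as a small limit of accessible $\infty$-categories along accessible functors, hence is accessible. The main delicate point I expect is the first step: ensuring that the end identification holds at the level of $\infty$-categories rather than merely of their underlying spaces. This requires keeping careful track of the functoriality in $K$ of $\mathcal{C}^K$ and of the fact that the cotensor and end constructions commute in $\widehat{\mathbf{Cat}}_{\infty}$; once that is pinned down, the rest is a routine application of the stability of accessibility under small limits.
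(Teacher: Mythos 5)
Your proposal is correct and follows essentially the same route as the paper: identify $Fun^{\otimes}(\mathcal{B},\mathcal{C})$ with the end $\int_{\langle n\rangle\in\mathcal{F}in_*}Fun(\mathcal{B}^{\times n},\mathcal{C}^{\times n})$ via the end formula for mapping spaces in functor categories, the cotensor adjunction and Yoneda, then apply \cite[Proposition 5.4.4.3]{L1} to each term and \cite[Proposition 5.4.7.3]{L1} to conclude that the small limit of accessible $\infty$-categories along accessible transition functors is accessible. Your extra remark justifying why the transition functors are accessible (accessibility of $\mathcal{C}(f)$ combined with pre- and post-composition) fills in a point the paper only asserts, and matches the intended argument.
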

 
 \begin{proof} We use that mapping spaces in functor categories can be described as ends. In fact by \cite[Proposition 5.1.]{GHN} there is, for two functors $F,G:\mathcal{X}\to \mathcal{Y}$ between  $\infty$-categories,  a canonical equivalence \[Fun(\mathcal{X},\mathcal{Y})(F,G)\simeq \int_{X\in \mathcal{X}}\mathcal{Y}(F(X),G(X))\]
 From this get an equivalence
 \[Fun(\mathcal{F}in_* ,\widehat{\mathbf{Cat}}_{\infty})(\mathcal{B}, \mathcal{C})\simeq \int_{\langle n \rangle \in \mathcal{F}in_*}\widehat{\mathbf{Cat}}_{\infty}(\mathcal{B}^{\times n},\mathcal{C}^{\times n})\]
 Hence we have equivalences for each $K\in 
\widehat{\mathbf{Cat}}_{\infty}$
 \[\widehat{\mathbf{Cat}}_{\infty}(K, \int_{\langle n \rangle }Fun(\mathcal{B}^{\times n},\mathcal{C}^{\times n}))\simeq 
 \int_{\langle n \rangle }\widehat{\mathbf{Cat}}_{\infty}(K,Fun(\mathcal{B}^{\times n},\mathcal{C}^{\times n}))\simeq\]
 \[ \int_{\langle n \rangle }\widehat{\mathbf{Cat}}_{\infty}(\mathcal{B}^{\times n},Fun(K,\mathcal{C}^{\times n}))\simeq Fun(\mathcal{F}in_* ,\widehat{\mathbf{Cat}}_{\infty})(\mathcal{B}, \mathcal{C}^K )\simeq
 \widehat{\mathbf{Cat}}_{\infty}(K, Fun^{\otimes}(\mathcal{B},\mathcal{C})\]
 
 which represents an equivalence 
 
 \[\int_{\langle n \rangle }Fun(\mathcal{B}^{\times n},\mathcal{C}^{\times n})\simeq Fun^{\otimes}(\mathcal{B},\mathcal{C})\]
 Since by assumption $\mathcal{B}$ is essentially small and $\mathcal{C}$ is accessible an application of \cite[Proposition 5.4.4.3]{L1} gives that $\int_{\langle n \rangle }Fun(\mathcal{B}^{\times n},\mathcal{C}^{\times n})$ is accessible for all $n$. All transition morphisms in the diagram of the end are in $Acc$.
The $\infty$-category $Acc$ has small limits which are preserved by the inclusion $Acc\subset \widehat{\mathbf{Cat}}_{\infty}$ by \cite[Proposition 5.4.7.3]{L1}. Consequently the end is in $Acc$ as well.$\square$ 
 \end{proof}
 
 \begin{corollary}\label{isacc}  Let $\mathcal{B}$ and $\mathcal{C}$ be a  symmetric monoidal $\infty$-categories where the former is essentially small and the later is accessible. Assume that
 $\mathcal{B}$ and $\mathcal{C}$ have finite products. Then 
 the $\infty $-category $Fun^{\sqcup}(\mathcal{B},\mathcal{C})$ is accessible.
 \end{corollary}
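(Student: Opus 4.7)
The plan is to deduce this corollary from the preceding theorem by equipping $\mathcal{B}$ and $\mathcal{C}$ with their Cartesian symmetric monoidal structures and then identifying the $\infty$-category of product-preserving functors with $Fun^{\otimes}(\mathcal{B},\mathcal{C})$ under that choice of monoidal structure.

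First I would equip $\mathcal{B}$ and $\mathcal{C}$ with the Cartesian symmetric monoidal structure, that is, the functor $F \colon \mathcal{F}in_* \to \widehat{\mathbf{Cat}}_{\infty}$ which sends $\langle n \rangle$ to $\mathcal{C}^{\times n}$ and which acts by projections on inert morphisms and by iterated (co)product on active morphisms. The existence of this structure on any $\infty$-category with finite (co)products is due to Lurie \cite{L2}. The next step is to verify that $F$ factors through $Acc$, so that $\mathcal{C}$ is accessible as a symmetric monoidal $\infty$-category in the sense introduced just above. Accessibility of the objects $\mathcal{C}^{\times n}$ follows because $Acc$ is closed under finite products; the inert structure maps are projections, which are manifestly accessible; and the active structure maps reduce to iterations of the binary multiplication $\mathcal{C} \times \mathcal{C} \to \mathcal{C}$ given by (co)product, which preserves filtered colimits under the hypotheses on $\mathcal{C}$.

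With this in hand, I would invoke the universal property of the Cartesian symmetric monoidal structure: symmetric monoidal functors $\mathcal{B} \to \mathcal{C}$ (both equipped with Cartesian structures) correspond precisely to functors preserving finite (co)products. This gives a canonical equivalence
\[
  Fun^{\otimes}(\mathcal{B},\mathcal{C}) \simeq Fun^{\sqcup}(\mathcal{B},\mathcal{C}).
\]
Since the preceding theorem shows that the left-hand side is accessible whenever $\mathcal{B}$ is essentially small and $\mathcal{C}$ is an accessible symmetric monoidal $\infty$-category, the right-hand side is accessible as well, which is the claim.

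The main obstacle is the bookkeeping needed to show that the Cartesian symmetric monoidal structure on $\mathcal{C}$ really does factor through $Acc$: one must check accessibility of every structure map in $\mathcal{F}in_*$, and not only the single binary multiplication. Once this verification is carried out, the corollary drops out of the preceding theorem by transport of structure along the equivalence $Fun^{\otimes} \simeq Fun^{\sqcup}$.
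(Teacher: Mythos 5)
Your proposal is correct and follows essentially the same route as the paper: equip both categories with the (co)cartesian symmetric monoidal structure and use Lurie's universal property \cite[2.4.3.8]{L2} to identify $Fun^{\sqcup}(\mathcal{B},\mathcal{C})$ with $Fun^{\otimes}(\mathcal{B},\mathcal{C})$, then apply the preceding theorem. One small point: since $Fun^{\sqcup}$ concerns coproducts you should consistently use the \emph{cocartesian} structure (as the paper does), which also makes your extra verification that the structure factors through $Acc$ immediate, because the coproduct functor $\mathcal{C}\times\mathcal{C}\to\mathcal{C}$ is a left adjoint and hence preserves all colimits --- a step the paper leaves implicit and which you rightly flag.
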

 \begin{proof}It follows from \cite[2.4.3.8.]{L2}
 that there is an equivalence \[Fun^{\sqcup}(\mathcal{B},\mathcal{C})\simeq Fun^{\otimes}(\mathcal{B},\mathcal{C})\] when $\mathcal{B},\mathcal{C}$ carry the cocartesian symmetric momoidal structure.$\square$
 \end{proof}
 
 Now as we have seen that $coGr(\mathcal{D})$ is presentable for presentable $\mathcal{D}$ and hence that the forget functor $\phi$ admits a right adjoint. We may ask if the forget functor $\phi $ is comonadic.
This is answered affirmatively below.  The notion of a $\phi$-split object can be found in\cite[Definition 4.7.2.2.]{L2}.
 
 \begin{proposition} Let $\mathcal{D}$ be a presentable $\infty$-category.  The forget functor  \[\phi:coGr(\mathcal{D})\to \mathcal{D}\] is comonadic.$\square$
 \end{proposition}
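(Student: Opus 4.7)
The plan is to apply the $\infty$-categorical dual Barr--Beck--Lurie theorem (the comonadic version of \cite[Theorem 4.7.3.5]{L2}). It reduces comonadicity of $\phi$ to three conditions: (i) $\phi$ admits a right adjoint; (ii) $\phi$ is conservative; and (iii) $coGr(\mathcal{D})$ admits totalizations of $\phi$-split cosimplicial objects and $\phi$ preserves them.

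For (i), the preceding theorem establishes that $coGr(\mathcal{D})$ is presentable, and its proof shows along the way that $\phi$ preserves small colimits (they are computed in $\mathcal{D}$). Since a colimit-preserving functor between presentable $\infty$-categories admits a right adjoint, one obtains $\psi:\mathcal{D}\to coGr(\mathcal{D})$. For (ii), I would exploit the equivalence $coGr(\mathcal{D})\simeq Fun^{\sqcup}(\mathcal{F},\mathcal{D})$ from Lemma \ref{equi0}. Under this identification $\phi$ becomes evaluation at the free generator $\mathbb{Z}\in\mathcal{F}$. Because every object of $\mathcal{F}$ is a finite coproduct of copies of $\mathbb{Z}$ and the functors in question preserve finite coproducts, a natural transformation is an equivalence precisely when its component at $\mathbb{Z}$ is, so $\phi$ reflects equivalences.

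For (iii), existence of totalizations of arbitrary cosimplicial objects is immediate from presentability of $coGr(\mathcal{D})$. The content to verify is that $\phi$ sends the totalization of a $\phi$-split $X^{\bullet}$ to the totalization of $\phi X^{\bullet}$. This is a standard absolute-limit argument: by definition $\phi X^{\bullet}$ extends to a split augmented cosimplicial object in $\mathcal{D}$, and split augmented (co)simplicial objects exhibit absolute limits in the sense of \cite[\S 4.7.2]{L2}, hence their limiting cones are preserved by any functor. The main obstacle, I expect, will be the careful execution of this last step inside the $\infty$-categorical formalism: one must confirm that the totalization formed abstractly in $coGr(\mathcal{D})$ maps under $\phi$ to the split augmentation in $\mathcal{D}$, and not merely to some noncanonical cone over $\phi X^{\bullet}$. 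Once this identification is secured, both the existence and preservation clauses of (iii) follow simultaneously, and the dual Barr--Beck theorem delivers comonadicity of $\phi$.
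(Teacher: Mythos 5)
Your reduction to the dual Barr--Beck theorem, the existence of the right adjoint via presentability of $coGr(\mathcal{D})$, and the conservativity argument are all sound; the conservativity step in particular (identifying $\phi$ with evaluation at the free generator under the equivalence $coGr(\mathcal{D})\simeq Fun^{\sqcup}(\mathcal{F},\mathcal{D})$ of Lemma \ref{equi0} and using preservation of finite coproducts) is a legitimate alternative to the paper's route, which instead factors the forgetful functor through commutative monoids and invokes fullness. The genuine gap is in your step (iii), and it is exactly the point you name as ``the main obstacle'' and then do not resolve. A $\phi$-split cosimplicial object $X^{\bullet}$ comes with a splitting of $\phi X^{\bullet}$ \emph{in} $\mathcal{D}$, not a splitting of $X^{\bullet}$ in $coGr(\mathcal{D})$. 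Absoluteness of the split totalization therefore only says that $\lim\phi X^{\bullet}$ is preserved by every functor \emph{out of} $\mathcal{D}$; it says nothing about whether $\phi$ carries the totalization of $X^{\bullet}$, formed abstractly in the presentable category $coGr(\mathcal{D})$, to that split limit. Since $\phi$ is colimit-preserving but there is no a priori reason for it to preserve totalizations, ``securing the identification'' is the entire content of the proposition, and the absolute-limit remark cannot supply it.

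The paper closes this gap structurally: it embeds $\mathcal{D}$ fully and finite-coproduct-preservingly into $\mathcal{E}=\mathcal{P}(\mathcal{D}^{op})^{op}$, observes that $\phi$ is the pullback of the forgetful functor $\psi:coGr(\mathcal{E})\to\mathcal{E}$ along $\mathcal{D}\subset\mathcal{E}$ and that the Barr--Beck conditions are stable under such pullbacks, and then proves the dual statement for $\mathcal{E}^{op}$: this category is presentable and cartesian with products commuting with sifted colimits, so by \cite[Proposition 3.2.3.1]{L2} commutative monoids in $\mathcal{E}^{op}$ admit sifted colimits preserved by the forgetful functor, grouplike objects are closed under these, and conservativity follows from the factorization through $Cmon(\mathcal{E}^{op})$. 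Hence $\psi$ preserves \emph{all} totalizations, not merely the split ones. Some such device is unavoidable precisely because finite coproducts in an arbitrary presentable $\mathcal{D}$ need not commute with cosifted limits; passing to copresheaves, where colimits are computed objectwise, is what makes the commutation available. To repair your outline you must replace the absolute-limit remark by an actual proof that $\phi$ preserves totalizations of $\phi$-split cosimplicial objects, for instance by this embedding argument.
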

 
 \begin{proof}
   According to the dual of the Barr-Beck theorem of Lurie \cite[Theorem 4.7.3.5.]{L2} it is enough to show that every cosimplicial $\phi$-split object in $coGr(\mathcal{D})$ has a totalization which is preserved by $\phi$ and that the forget functor to $\mathcal{D}$ is conservative. To see this we may embed 
 $\mathcal{D}$ fully  in  a category  $\mathcal{E}$ if the  embedding preserves finite coproducts. This is so since in this situation  $\phi$ is the pullback of the forget functor $\psi:coGr(\mathcal{E})\to \mathcal{E}$ along $\mathcal{D}\subset \mathcal{E}$ and the property in question is stable under pullback.\\
  So we may choose for $\mathcal{E}$ copresheaves on $\mathcal{D}$ i.e. $\mathcal{E}=\mathcal{P}(\mathcal{D}^{op})^{op}$ . In this case the dual assertion is true  for $\mathcal{E}^{op}$  by the following argument.
   Equipped with the cartesian structure  $\mathcal{E}^{op}$ is a presentable symmetric monoidal $\infty$-category such that the tensor product preserves sifted colimits. By \cite[Proposition 3.2.3.1.]{L2} it follows that 
 the $\infty$-category
 $Cmon( \mathcal{E}^{op})$ of commutative monoids in  $\mathcal{E}^{op}$ admits sifted colimits which are preserved by the forget functor to  $\mathcal{E}$. Now $Gr(\mathcal{E}^{op})$ is in  $Cmon( \mathcal{E}^{op})$ closed under sifted colimits since products preserve sifted colimits  as noted. 
The forget functor $Gr( \mathcal{E}) \to Cmon( \mathcal{E})$ is conservative by fullness. The  forget functor $Cmon( \mathcal{E})\to \mathcal{E}$ is conservative as well and $Gr( \mathcal{E})\to  \mathcal{E}$ factors over $Cmon( \mathcal{E})$ and so it is conservative.  By Barr-Beck $Gr(\mathcal{E}^{op}))\to \mathcal{E}^{op}$ is monadic.
In fact all realizations are preserved not only the split ones. Consequently, by what was said above,  $\phi$ is comonadic.$\square$
\end{proof}
\begin{corollary}Let $\mathcal{D}$ be a presentable $\infty$-category and $C$ an accessible comonad on  $\mathcal{D}$. Then the forget functor $\phi :coGr(Coalg_C)\to (Coalg_C$ is comonadic.
\end{corollary}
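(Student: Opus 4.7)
The plan is to deduce this corollary directly from the preceding proposition, which already asserts comonadicity of the forget functor $coGr(\mathcal{E}) \to \mathcal{E}$ for any presentable $\infty$-category $\mathcal{E}$. Thus the only thing that really needs to be checked is that $\mathcal{E} := Coalg_C$ meets the hypothesis of that proposition, i.e.\ that $Coalg_C$ is itself presentable.

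But this is precisely the content of the theorem from \cite[6.84]{He} recorded earlier in the appendix: if $\mathcal{D}$ is presentable and $C$ is an accessible comonad on $\mathcal{D}$, then the $\infty$-category $Coalg_C$ of $C$-coalgebras is presentable. In particular $Coalg_C$ admits finite coproducts, so $coGr(Coalg_C)$ is defined according to our earlier definition, and the forget functor $\phi$ in the statement of the corollary is literally the forget functor $coGr(\mathcal{E}) \to \mathcal{E}$ with $\mathcal{E} = Coalg_C$. Applying the previous proposition to this $\mathcal{E}$ therefore yields comonadicity of $\phi$ immediately.

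There is consequently no genuine obstacle in this step; the real work has already been done in establishing (i) presentability of $Coalg_C$ under accessible comonads and (ii) comonadicity of the forget functor from coabelian cogroup objects on a presentable $\infty$-category. The only care needed is notational, to verify that the two occurrences of the phrase \emph{forget functor} refer to the same arrow and that the construction $coGr(-)$ behaves as expected when evaluated on the already-structured category $Coalg_C$, both of which are immediate from the definitions.
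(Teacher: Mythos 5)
Your proposal is correct and is exactly the intended argument: the paper states this corollary without separate proof precisely because it follows by applying the preceding proposition to $\mathcal{E}=Coalg_C$, whose presentability is supplied by the earlier theorem quoted from \cite[6.84]{He}. Nothing further is needed.
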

\begin{remark} The application which we make of the results in the appendix is $1$-categorical. But a possible direct proof would in any case involve $2$-categorical arguments as in \cite{GKR}. On the other hand we have $\infty$-categorical applications for Andr\'{e}-Quillen  cohomology of
$E_{\infty}$-coalgebras in mind.
\end{remark}

\noindent
 Manfred Stelzer,
Universit\"at Osnabr\"uck.


\begin{thebibliography}{MMM}
\bibitem[A]{A}J. Ad\'{a}mek, S. Milius, L. Sousa and T. Wissmann. On finitary functors. Theory Appl. Categ. 34 (2019), Paper No. 35, 1134–-1164.
\bibitem[BRS]{BRS} G. Biedermann, G. Raptis and M. Stelzer, The realization space of an unstable coalgebra, Ast\'{e}isque 393 (2017).
\bibitem[BDG]{BDG}
D. Blanc, W. Dwyer, and P. Goerss,
 The realization space of a $\Pi$-algebra: a moduli problem in algebraic topology,
Topology 43:857--892, 2004.
\bibitem[Bou1]{Bou1}
A. Bousfield, Operations on derived functors of non-additive functors, manuscript 1967.
\bibitem[Bou2]{Bou2} A. Bousfield, The localization of spaces with respect to homology. Topology 14 (1975), 133–150. 
\bibitem[Bou3]{Bou3}
A. Bousfield,
 Homotopy spectral sequences and obstructions.
Israel J. Math. 66(1-3):54--104, 1989.
\bibitem[Bou4]{Bou4}
A. Bousfield
 Cosimplicial resolutions and homotopy spectral sequences in model categories,
Geom. Topol. 7:1001--1053, 2003. 
\bibitem[BK]{BK}A. Bousfield and D. Kan, Homotopy limits, completions and localizations. Lecture Notes in Mathematics, Vol. 304. Springer-Verlag, Berlin-New York, 1972.
\bibitem[CMV]{CMV}
M. Cuvilliez, A. Murillo, A. Viruel,  Nilpotency of self homotopy equivalences with coefficients. Ann. Inst. Fourier (Grenoble) 61 (2011), no. 1, 351–364
\bibitem[DZ]{DZ} E.Dror and A. Zabrodsky, Unipotency and nilpotency in homotopy equivalences, Topology 18 (1979), p. 187--197.
\bibitem[F]{F}
Y. F\'{e}lix.
 D\'{e}formation d'espaces rationnels de m\^{e}me cohomologie.
C. R. Acad. Sci. Paris S\'{e}r.A-B 290(1) 1980.
\bibitem[GKR]{GKR} 
R. Garner, M. Kedziorek, E. Riehl, Lifting accessible model structures,
 arXiv:1802.09889.
\bibitem[GGN]{GGN}
D. Gepner, M. Groth, T. Nikolaus, Universality of multiplicative infinite loop space machines. Algebr. Geom. Topol. 15 (2015), no. 6, 3107–3153. 
 \bibitem[GHN]{GHN}
D. Gepner, R. Haugseng, T. Nikolaus, Lax colimits and free fibrations in $\infty$-categories. Doc. Math. 22 (2017), 1225–1266. 
\bibitem[G]{G} P. Goerss,  Andr\'{e}-Quillen cohomology and the homotopy groups of mapping spaces: understanding the E2-term of the Bousfield-Kan spectral sequence, Journal od Pure and Applied Algebra, vol.63, Issue 2, (1990), p. 113--153.
\bibitem[HSt]{HSt}
S. Halperin and J. Stasheff,
 Obstructions to homotopy equivalences,
Advances in Math. 32(3):233--279, 1979.
\bibitem[He]{He} H. Heine, Restricted $L_{\infty}$ algebras. Thesis university of Osnabrück 2018.
\bibitem[HMR]{HMR}P. Hilton, G.Mislin and J.Roitberg, Localization of Nilpotent Groups and Spaces, Mathematics Studies, vol.15, North-Holland, 1975.
   \bibitem[Hi]{Hi}
 P. Hirschhorn, Model categories and their localizations. Mathematical Surveys and Monographs, 99. American Mathematical Society, Providence, RI, (2003). xvi+457 pp
\bibitem[L1]{L1}
J. Lurie, Higher topos theory. Annals of Mathematics Studies, 170. Princeton University Press, Princeton, NJ, 2009. xviii+925 pp. 
\bibitem[L2]{L2}J. Lurie, Higher Algebra, avaiable on authors homepage.
\bibitem[Ma]{Ma} K. Maruyama,  Localization of a certain subgroup of self-homotopy equivalences. Pacific J. Math. 136 (1989), no. 2, 293–301.
\bibitem[Mi]{Mi}
W. Michaelis, Coassociative coalgebras. Handbook of algebra, Vol. 3, 587–-788, Handb. Algebr., 3, Elsevier/North-Holland, Amsterdam, (2003). 
\bibitem[M]{M}
H. Miller, 
The Sullivan conjecture on maps from classifying spaces.
Ann. of Math. 120(1) (1984),39--87.
\bibitem[Mo]{Mo}
J. M\o ller, Self-homotopy equivalences of
H(--,Z/p)-
 local spaces.
  Kodai Math. J. 12 (1989), no. 2, 270–-281.
\bibitem[P]{P}I. Plotkin: Groups of Automorphisms of Algebraic Systems. DD. 353-360. Woklers-Noordhoff
Publishing. Groningen (1972)
\bibitem[Sm]{Sm} 
J. Slomińska,  Andr\'{e}-Quillen cohomology for commutative coalgebras. Colloq. Math. 32 (1975), no. 2, 167–-184. 
\bibitem[SSt]{SSt}
M. Schlessinger and J. Stasheff.
 Deformation Theory and Rational Homotopy Type.
arXiv:math.QA/1211.1647.
\bibitem[St]{St}
M. Stelzer, Higher divided power operations and unstable coalgebras, in preparation.
\end{thebibliography}
\end{document}